\pgfplotsset{width=9cm,compat=1.5.1}
\newcommand{\globalcolor}[1]{%
  \color{#1}\global\let\default@color\current@color
}
\newtheorem{theorem}{Theorem}
\numberwithin{theorem}{section}
\newtheorem{lemma}[theorem]{Lemma}
\newtheorem{prop}[theorem]{Proposition}
\theoremstyle{definition}
\newtheorem{remark}[theorem]{Remark}
\newtheorem{example}[theorem]{Example}
\newcommand{\tinyspace}{\mspace{1mu}}
\newcommand{\tr}{\operatorname{Tr}}
\renewcommand{\t}{{\scriptscriptstyle\mathsf{T}}}
\newcommand{\abs}[1]{\lvert #1 \rvert}
\newcommand{\ip}[2]{\langle #1 , #2\rangle}
\newcommand{\bip}[2]{( #1 , #2)}
\newcommand{\floor}[1]{\lfloor #1 \rfloor}
\newcommand{\norm}[1]{\lVert\tinyspace #1 \tinyspace\rVert}
\newcommand{\I}{\mathds{1}}
\newcommand{\setft}[1]{\mathrm{#1}}
\newcommand{\Pos}{\setft{Pos}}
\newcommand{\Unitary}{\setft{U}}
\newcommand{\Orth}{\setft{O}}
\newcommand{\Herm}{\setft{Herm}}
\newcommand{\Sym}{\setft{Sym}}
\newcommand{\Trans}{\mathsf{T}}
\newcommand{\complex}{\mathbb{C}}
\newcommand{\field}{\mathbb{F}}
\newcommand{\real}{\mathbb{R}}
\renewcommand{\natural}{\mathbb{N}}
\newcommand{\integer}{\mathbb{Z}}
\newcommand\X{\mathcal{X}}
\newcommand\A{\mathcal{A}}
\newcommand\B{\mathcal{B}}
\newcommand\V{\mathcal{V}}
\newcommand\U{\mathcal{U}}
\newcommand\C{\mathcal{C}}
\newcommand{\End}{\setft{End}}
\newcommand{\Hom}{\setft{Hom}}
\DeclareMathOperator{\spn}{span}
\newcommand{\dg}{\dagger}
\newcommand{\ootimes}{ \otimes \cdots \otimes }
\newcommand{\bfd}{\mathbf{d}}
\newcommand{\bff}{\mathbf{f}}
\newcommand{\bfw}{\mathbf{w}}
\newcommand{\bfx}{\mathbf{x}}
\newcommand{\bfz}{\mathbf{z}}
\newcommand{\frakS}{\mathfrak{S}}
\newcommand{\bfalpha}{\boldsymbol{\alpha}}
\newcommand{\bfbeta}{\boldsymbol{\beta}}
\newcommand{\bfgamma}{\boldsymbol{\gamma}}
\newcommand{\injects}{\hookrightarrow}
\renewcommand{\over}[1]{\overline{#1}}
\def\ba#1\ea{\begin{align}#1\end{align}}
\newcommand{\bull}{\bullet}
\newcommand{\cl}{\setft{Cl}}
\newcommand{\bd}{{\bf d}}
\newcommand{\Vc}{\V_{\complex}}
\newcommand{\Vr}{\V_{\real}}
\newcommand{\pmin}{p_{\min}}
\newcommand{\Pmin}{P_{\min}}
\begin{document}

\emergencystretch 3em
\title{\bf A hierarchy of eigencomputations for polynomial optimization on the sphere}
%
%A practical alternative to the sum-of-squares hierarchy for polynomial optimization on the sphere\\
%or\\
%The symmetric extensions hierarchy for real polynomial optimization on the sphere
%\\
%or\\
%A hierarchy of eigencomputations for polynomial optimization on the sphere}

 \author[$* \dagger$]{
   Benjamin Lovitz}
 \author[$\ddagger$]{Nathaniel Johnston \thanks{emails: benjamin.lovitz@gmail.com, njohnston@mta.ca}}
 \affil[$\dagger$]{Department of Mathematics, Northeastern University, Boston, Massachusetts, USA}
  \affil[$\ddagger$]{Department of Mathematics and Computer Science, Mount Allison University, Sackville,

  New Brunswick, Canada}

\date{\today}

\maketitle
\begin{abstract}
We introduce a convergent hierarchy of lower bounds on the minimum value of a real form over the unit sphere. The main practical advantage of our hierarchy over the real sum-of-squares (RSOS) hierarchy is that the lower bound at each level of our hierarchy is obtained by a minimum eigenvalue computation, as opposed to the full semidefinite program (SDP) required at each level of RSOS. In practice, this allows us to compute bounds on much larger forms than are computationally feasible for RSOS. Our hierarchy outperforms previous alternatives to RSOS, both asymptotically and in numerical experiments. We obtain our hierarchy by proving a reduction from real optimization on the sphere to Hermitian optimization on the sphere, and invoking the Hermitian sum-of-squares (HSOS) hierarchy. This opens the door to using other Hermitian optimization techniques for real optimization, and gives a path towards developing spectral hierarchies for more general constrained real optimization problems. To this end, we use our techniques to develop a hierarchy of eigencomputations for computing the real tensor spectral norm.

%For both hierarchies, the underlying space at the $k$-th level is the set of homogeneous polynomials of degree $2k$. We prove that our hierarchy converges as $O(1/k)$ in the level $k$, matching the best-known convergence of the SOS hierarchy when the number of variables $n$ is less than the half-degree $d$ (the best-known convergence of SOS when $n \geq d$ is $O(1/k^2)$). While other ``cheaper" hierarchies than the SOS

%More generally, we introduce a convergent hierarchy of minimum eigenvalue computations for minimizing the inner product between a real tensor and an element of the spherical Segre-Veronese variety, with similar convergence guarantees. As examples, we obtain hierarchies for computing the (real) tensor spectral norm, and for minimizing biquadratic forms over the sphere. Hierarchies of eigencomputations for more general constrained polynomial optimization problems are discussed.
\end{abstract}
\newpage
\tableofcontents
\newpage
%-----------------------------------------------------------------------------%
\section{Introduction}\label{intro}
%-----------------------------------------------------------------------------%

%In this work we consider the fundamental task of minimizing a homogeneous degree-$d$ polynomial $p(x) \in \real[x]_{d}$ in $n$ variables $x=(x_1,\dots,x_n)$ over the unit sphere
%\begin{align}\begin{split}\label{eq:optimization_problem_1}
%        \textup{minimize:} & \ p(x) \\
%        \textup{subject to:}& \ x\in \Sp^{n-1},
%        \end{split}
%        \end{align}
%where  $\Sp^{n-1}=\{x \in \real^n : \norm{x}=1\}.$ Let $\mu= \min_{x\in \Sp^{n-1}} p(x)$.

%\section{Mathematical preliminaries} \label{sec:prelim}

%\section{The hierarchy}

%Let $\Vr=\real^n$ and let $S^D(\Vr^*)\subseteq (\Vr^*)^{\otimes d}$ be symmetric tensor product of the dual vector space $\Vr^*$. For a tensor $p \in S^D(\Vr^*)$ and a vector $x \in \Vr$, we define $p(x):=(p,x^{\otimes d})$, where $(\cdot, \cdot)$ denotes the bilinear pairing between $S^d(\Vr^*)$ and $S^d(\Vr)$. This identifies $S^D(\Vr^*)$ with the set of homogeneous polynomials of degree $D$ on $\Vr$. We will make similar identifications for other tensor spaces, and we defer the precise definitions to Sections~\ref{background} and~\ref{sec:herm}.

Let $\Vr=\real^n$ and let $S^D(\Vr^*)$ be the set of homogeneous polynomials of degree $D$ on $\Vr$.\footnote{Technically, we define $S^D(\Vr^*)\subseteq (\Vr^*)^{\otimes D}$ to be the set of \textit{symmetric tensors}, where $\Vr^*$ denotes the dual vector space to $\Vr$ (the set of $n$-dimensional row vectors). A symmetric tensor $p \in S^D(\Vr^*)$ can be identified with a homogeneous polynomial $p(x):=(p,x^{\otimes D})$, where $x\in \Vr$ is a variable and $(\cdot, \cdot)$ denotes the bilinear pairing (dot product) between $S^D(\Vr^*)$ and $S^D(\Vr)$. This correspondence is introduced formally in Section~\ref{background}.\label{f1}} We consider the fundamental task of minimizing a polynomial ${p(x) \in S^D(\Vr^*)}$ over the unit sphere, i.e. computing
\begin{align}\label{eq:optimization_problem}
p_{\min}=\min_{\substack{x \in \Vr\\ \norm{x}=1}} p(x).
%        \textup{minimize:} \quad & \ p(x) \\
%        \textup{subject to:}\quad & \ x\in \Sp(\real^n),
%        \end{split}
        \end{align}
Optimization problems of this form have applications in several areas~\cite{fang2021sum}. For example, for a special class of degree-three polynomials this corresponds to computing the largest stable set of a graph \cite{nesterov2003random,de2008complexity}. As another example, computing the $2 \rightarrow 4$ norm (i.e., hypercontractivity) of a matrix $A$ is equivalent to maximizing the degree-four polynomial $p(x)=\norm{Ax}_4^4$ on the unit sphere, and has many connections to problems in computational complexity, quantum information and compressive sensing ~\cite{barak2012hypercontractivity}. For $D=2$ this problem is equivalent to computing the minimum eigenvalue of a symmetric matrix, which can be solved efficiently. However, already for $D=3$ this problem is NP-hard as it contains the stable set problem as a special case~\cite{nesterov2003random}.

The \textit{real sum-of-squares} (RSOS) hierarchy is a hierarchy of semidefinite programs (SDPs) of increasing size whose optimum values approach $p_{\min}$ from below~\cite{reznick1995uniform,lasserre2001global}. We propose a hierarchy of minimum eigenvalue computations that also approaches $p_{\min}$ from below. At the $k$-th level of both our hierarchy and the RSOS hierarchy, the underlying space is the set of homogeneous polynomials of degree $2k$. However, our hierarchy has the advantage of merely requiring an eigencomputation at each level,
as opposed to the full SDP required at each level of RSOS. This allows us to go to much higher levels, and to handle much larger polynomials in practice. Our hierarchy is based on a reduction to {Hermitian} optimization on the sphere (a complex analogue to~\eqref{eq:optimization_problem}), along with the \textit{Hermitian sum-of-squares} (HSOS) hierarchy---a spectral hierarchy for Hermitian optimization on the sphere. By a similar reduction, our hierarchy extends to other optimization problems including computing the real tensor spectral norm.

%Spectral hierarchies for more general constrained Hermitian optimization problems are known~\cite{catlin1999isometric,DJLV24}, and suggest spectral hierarchies for more general constrained real optimization problems (see Section~\ref{constraints}).

We prove that our hierarchy converges additively as $O(1/k)$ in the level $k$. This is quadratically slower than the best-known convergence $O(1/k^2)$ of RSOS when $D\leq 2n$~\cite{fang2021sum}. When $D > 2n$ the result of~\cite{fang2021sum} does not apply, and the best-known convergence of RSOS is $O(1/k)$~\cite{reznick1995uniform, doherty2012convergence}.\footnote{See also \cite{BhattiproluGGLT17} which gives multiplicative approximation guarantees for polynomial optimization over the sphere using the sum of squares hierarchy.} Several other ``cheaper" alternatives to RSOS are known for the optimization problem~\eqref{eq:optimization_problem}. One such alternative is the \textit{diagonally-dominant sum-of-squares} (DSOS) hierarchy of~\cite{AM19}, which relies only on linear programming at each level, but does not converge in general~\cite[Proposition 3.15]{AM19}. More relevant to our work is the \textit{harmonic hierarchy} of~\cite{cristancho2024harmonic}. This hierarchy is also ``optimization free," and is proven to converge as $O(1/k^2)$ in the level $k$. However, the $k$-th level requires discrete minimization of a certain degree-$2k$ polynomial over a cubature rule of size exponential in $n$, which is prohibitively expensive when $n$ is large. Furthermore, it relies on an arbitrary choice of cubature rule and kernel, as opposed to our approach which is completely canonical.

% This is closely related to harmonic analysis on spheres, and builds on previous work of~\cite{blekherman2004convexity,fang2021sum,slot2022sum,slot2023sum}. This hierarchy depends on a choice of quadrature rule and kernels, and the authors give an explicit choice for which the harmonic hierarchy converges (multiplicatively) as $O(1/k^2)$ in the level $k$, with the $k$-th level minimizing a certain degree-$2k$ polynomial over approximately $k^n$ inputs. While their convergence in the level is faster than ours, our numerical experiments indicate that this computation is significantly slower than the minimum eigenvalue computation required for our hierarchy. We note that our hierarchy has the additional advantages of being completely canonical, and generalizing easily to optimization over the Segre-Veronese variety.

In Section~\ref{sec:numerics} we describe an implementation of our hierarchy in MATLAB, and present several examples to demonstrate its performance in comparison to the RSOS, DSOS, and harmonic hierarchies. In every example, our hierarchy obtains better bounds than the harmonic hierarchy per unit of computational time. If the polynomial is small enough that the RSOS hierarchy can be run, RSOS outperforms all other hierarchies. For larger problems (e.g., degree $4$ polynomials in more than $25$ variables), our hierarchy outperforms the rest.

More generally, we consider minimizing a real multi-homogeneous form over the product of unit spheres, or (by a similar identification as described in Footnote~\ref{f1}) minimizing the inner product between a real tensor and a unit partially-symmetric product tensor (i.e. an element of the \textit{spherical Segre-Veronese variety}). We develop an analogous hierarchy of eigencomputations for this setting, which also converges as $O(1/k)$. In particular, this gives hierarchies for computing the real spectral norm of a real tensor, and for minimizing a biquadratic form over the unit sphere. Computing the tensor spectral norm is a well-studied problem with connections to planted clique, tensor PCA and tensor decompositions ~\cite{FriezeK08, BrubakerV09, RichardM14}. These results are presented in Section~\ref{sec:tensor}.

In Section~\ref{constraints} we consider minimizing a real form under more general constraints. Spectral hierarchies for more general constrained Hermitian optimization problems are known~\cite{catlin1999isometric,DJLV24}, and our techniques suggest a plausible route towards converting these into spectral hierarchies for their real analogues.

%if they We develop a similar hierarchy of eigencomputations to lower bound the constrained optimum, which converge to a certain analogous \textit{complex} constrained optimization problem. As mentioned above, in the case of real polynomial optimization over the sphere (and more generally tensor optimization over the spherical Segre-Veronese), we obtained our convergent hierarchy by showing that this complex optimum is related to the real optimum up to a constant (Theorem~\ref{thm:real_opt}). It remains an interesting problem for future work to determine how the complex optimum is related to the real optimum under more general constraints.

In the remainder of this introduction we review the RSOS and HSOS hierarchies for the problem~\eqref{eq:optimization_problem} and its Hermitian analogue, respectively. We then describe our hierarchy, which we call the HRSOS hierarchy. For the sake of readability, we will defer some definitions until Section~\ref{background}.

\subsection{The real sum-of-squares hierarchy (RSOS)}
In this section we review the RSOS hierarchy for the optimization problem~\eqref{eq:optimization_problem}. By~\cite[Lemma B.2]{doherty2012convergence}, we can (and will) assume $D$ is even, and let $d=D/2$.
The RSOS hierarchy is a hierarchy of lower bounds on $p_{\min}$ which can be computed by semidefinite programming. Let $\Sigma_{n,k}\subseteq S^{2k}(\Vr^*)$ (or simply $\Sigma_k$ when $n$ is understood) be the set of forms which are sums of squares of degree-$k$ forms.

%(see e.g.~\cite[Section~3]{parrilo2003semidefinite} and the references therein for background on semidefinite programming)
%\ba
%s(x)=\sum_{i=1}^n x_i^2\in \Sigma_{1}
%\ea

%Let $\mu_k$ be the optimum value of the following semidefinite program, which we will refer to as the $k$-th level of the SOS hierarchy:

\begin{theorem}[RSOS hierarchy]\label{thm:intro_rsos}
Let $p \in S^{2d}(\Vr^*)$ be a real form of degree $2d$. For each $k \geq d$, let $\gamma_k$ be the optimum value of the SDP
\begin{align}\begin{split}\label{eq:SOS1}
        \textup{maximize:} \quad& \ \gamma \\
        \textup{subject to:}\quad& \ p(x) \cdot \norm{x}^{2(k-d)}-\gamma \cdot \norm{x}^{2k} \in \Sigma_{k}.
        \end{split}
        \end{align}
Then $\gamma_1 \leq \gamma_2 \leq \dots $ and $\lim_k \gamma_k = p_{\min}$. Furthermore, $p_{\min}-\gamma_k= O(1/k)$ when $d > n$, and $p_{\min}-\gamma_k= O(1/k^2)$ when $d\leq n$ (suppressing all but the $k$-dependence).
\end{theorem}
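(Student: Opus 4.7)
The plan is to verify the three distinct components of the statement—monotonicity, the upper bound $\gamma_k \leq p_{\min}$, and asymptotic convergence with the stated rates—each of which follows from standard sum-of-squares reasoning or from previously established quantitative Positivstellens\"atze for the sphere.

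First, for monotonicity, I would note that $\|x\|^2 \in \Sigma_1$ and that products of sums of squares are themselves sums of squares, so $\Sigma_k \cdot \Sigma_1 \subseteq \Sigma_{k+1}$. If $\gamma$ is feasible at level $k$, meaning $p(x)\|x\|^{2(k-d)} - \gamma\|x\|^{2k} \in \Sigma_k$, then multiplying the certificate by $\|x\|^2$ yields a certificate at level $k+1$, so $\gamma_k \leq \gamma_{k+1}$. For the upper bound, evaluate any feasibility certificate at a unit vector $x \in \Vr$: since $\|x\|=1$, the left-hand side reduces to $p(x)-\gamma$, which must be nonnegative because it equals a sum of squares. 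Hence every feasible $\gamma$ satisfies $\gamma \leq p(x)$ for all unit $x$, so $\gamma_k \leq p_{\min}$.

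For convergence, I would appeal to the sphere Positivstellensatz: for any $\gamma < p_{\min}$, the shifted form $p(x)-\gamma$ is strictly positive on the unit sphere, so Reznick's theorem \cite{reznick1995uniform} guarantees the existence of some $k \geq d$ for which $(p(x)-\gamma)\|x\|^{2(k-d)} \in \Sigma_k$, which is precisely the level-$k$ feasibility condition. This gives $\lim_k \gamma_k = p_{\min}$. For the quantitative rates, the general bound $p_{\min}-\gamma_k = O(1/k)$ is the effective version of Reznick's theorem, sharpened in \cite{doherty2012convergence}, while the improved rate $p_{\min}-\gamma_k = O(1/k^2)$ in the regime $d \leq n$ is the main theorem of \cite{fang2021sum}. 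I would invoke these as black boxes.

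The main obstacle is not any individual step but rather cleanly separating the classical algebraic content (monotonicity and the upper bound, which are one-line arguments) from the analytic content (the quantitative rates, which require substantial work in the cited papers). The implicit constants in the $O$-notation depend on $\|p\|$, the degree $D$, and the dimension $n$; since only the asymptotic behavior in $k$ is claimed, I would rely on the quoted results without reproducing their proofs, and the theorem would then follow by combining the four ingredients above.
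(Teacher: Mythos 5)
Your proposal is correct and matches the paper's treatment: the paper does not prove Theorem~\ref{thm:intro_rsos} itself but cites \cite{reznick1995uniform,lasserre2001global} for the limiting statement and \cite{reznick1995uniform,doherty2012convergence,fang2021sum} for the convergence rates, exactly as you do. Your fill-in of the elementary steps (monotonicity via $\Sigma_k\cdot\Sigma_1\subseteq\Sigma_{k+1}$, and the bound $\gamma_k\leq p_{\min}$ by evaluating a certificate at a unit vector) is the standard argument and is sound; just note that the shifted form you hand to Reznick's theorem should be written homogeneously as $p(x)-\gamma\norm{x}^{2d}$ so that multiplying by $\norm{x}^{2(k-d)}$ yields the level-$k$ certificate $p(x)\norm{x}^{2(k-d)}-\gamma\norm{x}^{2k}$.
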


The limiting statement was proven in~\cite{reznick1995uniform} (see also~\cite{lasserre2001global}). The convergence rates were proven in~\cite{reznick1995uniform, doherty2012convergence,fang2021sum}. To see equivalences between this formulation and other, perhaps more familiar, forms of the RSOS hierarchy, see e.g.~\cite[Proposition 2]{de2005equivalence} or~\cite[Lemma 1.3]{laurent2019notes}.

\subsection{The Hermitian sum-of-squares hierarchy (HSOS)}
Let $\V_{\complex}=\complex^n$. A \textit{Hermitian form} of bidegree $(d,d)$ is an element $H(z,w^\dg) \in S^d(\Vc^*)\otimes S^d(\Vc)$ for which $H(z,w^\dg)=\over{H(w,z^\dg)}$. Let $\Herm(S^d(\Vc))$ be the set of Hermitian forms of bidegree $(d,d)$. By standard polarization arguments, $\Herm(S^d(\Vc))$ can be identified with the set of Hermitian \textit{operators} on the vector space $S^d(\Vc)$ (see e.g.~\cite{d2011hermitian,d2019hermitian} or Section~\ref{sec:herm}). A Hermitian analogue to~\eqref{eq:optimization_problem} is given by
\ba\label{eq:hermitian_optimization_problem}
H_{\min}=\min_{\substack{z \in \Vc\\ \norm{z}=1}} H(z,z^\dg).
\ea
The \textit{Hermitian square} of a form $q(z) \in S^k(\Vc^*)$ is given by $|q(z)|^2 \in \Herm(S^k(\Vc))$. Let $\Pos(S^k(\Vc)) \in \Herm(S^k(\Vc))$ be the set of sums of Hermitian squares of degree-$k$ forms. This can be identified with the set of positive semidefinite \textit{operators} on the vector space $S^k(\Vc)$.

\begin{theorem}[HSOS hierarchy]\label{thm:intro_hsos}
Let $H \in \Herm(S^d(\Vc))$ be a Hermitian form. For each $k\geq d$, let $\mu_k$ be the optimum value of the SDP
\begin{align}\begin{split}\label{eq:HSOS1}
        \textup{maximize:} \quad& \ \mu \\
        \textup{subject to:}\quad& \ H(z,z^\dg) \cdot \norm{z}^{2(k-d)}-\mu \cdot \norm{z}^{2k} \in \Pos(S^k(\Vc)).
        \end{split}
        \end{align}
Then $\mu_d \leq \mu_{d+1} \leq \dots $ and $\lim_k \mu_k = H_{\min}$. Furthermore, $H_{\min} - \mu_k = O(1/k)$ (suppressing all but the $k$-dependence).
\end{theorem}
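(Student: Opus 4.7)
The plan is to reformulate the SDP \eqref{eq:HSOS1} as a minimum-eigenvalue computation on the symmetric subspace $S^k(\Vc)$, and then invoke a finite quantum de Finetti theorem. Let $\hat{H}$ denote the Hermitian operator on $S^d(\Vc)$ associated with $H$ under the standard polarization identification. Then $\norm{z}^{2k}$ corresponds to the identity $\Pi_k$ on $S^k(\Vc)$, and $H(z,z^\dg)\cdot\norm{z}^{2(k-d)}$ corresponds to the operator $\hat{H}_k := \Pi_k\bigl(\hat{H} \otimes \I^{\otimes(k-d)}\bigr)\Pi_k$ on $S^k(\Vc)$. Under this dictionary the SDP \eqref{eq:HSOS1} becomes
\begin{equation}
\mu_k \;=\; \min_{\substack{\ket{\phi} \in S^k(\Vc) \\ \norm{\phi}=1}} \bra{\phi}\, \hat{H} \otimes \I^{\otimes(k-d)} \,\ket{\phi}.
\end{equation}

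With this reformulation, I would first check monotonicity $\mu_k \leq \mu_{k+1}$: if $P(z) = \sum_i \abs{q_i(z)}^2$ is HSOS of bidegree $(k,k)$, then $\norm{z}^2 P(z) = \sum_{i,j}\abs{z_j q_i(z)}^2$ is HSOS of bidegree $(k+1,k+1)$, so any $\mu$ feasible at level $k$ remains feasible at level $k+1$. The inequality $\mu_k \leq H_{\min}$ follows by restricting the minimization to the unit symmetric product vectors $\ket{z^{\otimes k}} \in S^k(\Vc)$, for which $\bra{z^{\otimes k}} \hat{H} \otimes \I^{\otimes(k-d)} \ket{z^{\otimes k}} = H(z,z^\dg)$.

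The main step is the convergence rate. Given any unit $\ket{\phi} \in S^k(\Vc)$, I would set $\rho = \ket{\phi}\bra{\phi}$ and take $\rho_d$ to be its reduced density operator on the first $d$ tensor factors. Permutation symmetry of $\ket{\phi}$ implies that $\rho_d$ is a density operator supported on $S^d(\Vc)$, and one has $\bra{\phi}\, \hat{H} \otimes \I^{\otimes(k-d)} \,\ket{\phi} = \tr(\hat{H}\rho_d)$. A finite quantum de Finetti theorem (e.g., Christandl--K\"onig--Mitchison--Renner) then produces a probability measure $\nu$ on unit vectors in $\Vc$ satisfying
\begin{equation}
\Bignorm{\rho_d - \int \ket{z}\bra{z}^{\otimes d}\, d\nu(z)}_1 \;\leq\; \frac{C_{d,n}}{k},
\end{equation}
where $C_{d,n}$ depends only on $d$ and $n$. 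H\"older's inequality then yields $\tr(\hat{H}\rho_d) \geq \int \bra{z^{\otimes d}}\hat{H}\ket{z^{\otimes d}}\, d\nu(z) - \norm{\hat{H}}_\infty\, C_{d,n}/k \geq H_{\min} - O(1/k)$, and minimizing over $\ket{\phi}$ gives $\mu_k \geq H_{\min} - O(1/k)$. I expect the main obstacle to be selecting the correct quantitative form of the de Finetti bound and verifying carefully that $\rho_d$ lives on the symmetric subspace so that the product-state representation applies; both are standard but must be handled so that the implied constants depend only on $d$, $n$, and $\norm{\hat H}_\infty$ (i.e.\ not on $k$).
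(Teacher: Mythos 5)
Your proposal is correct and follows essentially the same route as the paper's proof of Theorem~\ref{thm:hsos}: identify $\mu_k$ with $\lambda_{\min}$ of $\Pi_k(\hat H\otimes\I^{\otimes k-d})\Pi_k$, pass to the $d$-particle reduced state of the minimizing eigenvector (which indeed lies in $S^d(\Vc)$ since $S^k(\Vc)\subseteq S^d(\Vc)\otimes S^{k-d}(\Vc)$), approximate it by a mixture of $(zz^\dg)^{\otimes d}$ via the finite quantum de Finetti theorem, and close with H\"older in the form $\tr(AB)\le\norm{A}_\infty\norm{B}_1$. The only cosmetic difference is the monotonicity argument, where you multiply the HSOS certificate by $\norm{z}^2$ rather than invoking $\tr_1(H_{k+1})=H_k$ and convexity as the paper does for the analogous step; both are standard and equivalent.
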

The limiting statement was proven in~\cite{quillen1968representation,catlin1995stabilization}, and the $O(1/k)$ convergence rate was proven in~\cite{to2006effective}. This result can also be proven using so-called \textit{quantum de Finetti theorems}~\cite{hudson1976locally,christandl2007one}, and we will do so in this work.

\begin{remark}Let $H_k(z,z^\dg):=H(z,z^\dg) \cdot \norm{z}^{2(k-d)} \in \Herm(S^k(\Vc))$. The optimum value $\mu_k$ of the SDP~\eqref{eq:HSOS1} is precisely $\lambda_{\min}(H_k)$, where $\lambda_{\min}(\cdot)$ denotes the minimum eigenvalue of the input viewed as a Hermitian operator. So the HSOS hierarchy is in fact a hierarchy of eigencomputations for optimizing a Hermitian form over the sphere.\end{remark}

While RSOS is a hierarchy of full semidefinite programs, HSOS is a hierarchy of mere eigencomputations. This leads us to ask, can we use the HSOS hierarchy for real optimization? The answer is yes.

\subsection{Our polynomial optimization hierarchy (HRSOS)}
We prove a reduction from the real optimization problem~\eqref{eq:optimization_problem} to the Hermitian optimization problem~\eqref{eq:hermitian_optimization_problem}, and then use the HSOS hierarchy to obtain a hierarchy of eigencomputations for~\eqref{eq:optimization_problem}.

Let $\V_{\complex}=\complex^n$ (the complexification of $\Vr$). For a form $p \in S^{2d}(\Vr^*)$, let $P \in \End(S^d(\Vr))$ be the \textit{maximally symmetric Gram operator} of $p$ (see Section~\ref{background},~\cite[Section 3]{doherty2012convergence}, or~\cite[Lemma 1.2]{laurent2019notes}). Since $P$ is real-symmetric, we can view it as a Hermitian operator. Recall the definitions of $p_{\min}$ and $P_{\min}$ from~\eqref{eq:optimization_problem} and~\eqref{eq:hermitian_optimization_problem}, respectively.

%In coordinates, if
%\ba
%p(x)=\sum_{\substack{\bfalpha \in \integer^n\\ |\bfalpha|=2d}} c_{\bfalpha} \binom{2d}{\bfalpha} x^{\bfalpha} \in S^{2d}(\V^*),
%\ea
%then
%\ba
%M(p)(z,z^*)=\sum_{|\bfalpha|=|\bfbeta|=d} c_{\bfalpha+\bfbeta} \binom{d}{\bfalpha}\binom{d}{\bfbeta} z^{\bfalpha}   (z^*)^{\bfbeta}\in \Herm(S^d(\Vc)).
%\ea
%See Section~\ref{background} for details.

%We call $M(p)$ the \textit{maximally symmetric Gram operator} of $p$, for reasons we will explain below. This is the same thing as the \textit{maximally symmetric operator} associated to $p$; see~\cite[Section 3]{doherty2012convergence} or~\cite[Lemma 1.2]{laurent2019notes}. For brevity, we let $P:=M(p)\in \Herm(S^d(\V))$.

\begin{theorem}\label{thm:intro_reduction}
For any form $p \in S^{2d}(\Vr^*)$, it holds that
\ba
P_{\min} \leq p_{\min} \leq \frac{P_{\min}}{\delta(d)},
\ea
where $\delta(d):={2^d}{\binom{2d}{d}^{-1}}$.
\end{theorem}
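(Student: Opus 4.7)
The plan is to prove the two inequalities separately. The first, $P_{\min} \leq p_{\min}$, is essentially a tautology: for any real unit $x \in \Vr$, the Hermitian form $H(z,z^\dg)$ corresponding to the real symmetric $P$ reduces to $p(x)$ at $z = x$, since $\bar x = x$ implies $(P,\, x^{\otimes d} \otimes x^{\otimes d}) = (p,\, x^{\otimes 2d}) = p(x)$. Minimizing over the complex unit sphere---which contains the real unit sphere---therefore yields $P_{\min} \leq p_{\min}$ immediately.

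For the harder direction $p_{\min} \leq P_{\min}/\delta(d)$, the plan is a polarization-averaging argument. I would take a Hermitian minimizer $z_0 = x_0 + iy_0$ with $x_0, y_0 \in \Vr$ and $\|z_0\|^2 = \|x_0\|^2 + \|y_0\|^2 = 1$, use the $U(1)$ phase invariance $H(e^{i\phi}z, e^{-i\phi}z^\dg) = H(z,z^\dg)$ to normalize $\langle x_0, y_0\rangle = 0$, and aim to also arrange $\|x_0\|=\|y_0\|=1/\sqrt 2$. The key identity to establish is
\[
\frac{1}{2\pi}\int_0^{2\pi} p(\cos\theta\, x_0 + \sin\theta\, y_0)\, d\theta = \frac{\binom{2d}{d}}{4^d}\, H(z_0, z_0^\dg).
\]
Both sides expand into the partial polarizations $\alpha_k := (p,\, x_0^{\otimes (2d-k)} \otimes y_0^{\otimes k})$ (using the full symmetry of the $2d$-tensor $p$): the left via the multinomial theorem together with the Beta-function integral $\frac{1}{2\pi}\int_0^{2\pi} \cos^{2d-2m}\theta\sin^{2m}\theta\, d\theta = \binom{2d}{d}\binom{d}{m}/(4^d \binom{2d}{2m})$, and the right via $(x_0 \pm iy_0)^{\otimes d}$ binomial expansion followed by collapse of the resulting phase sum using the coefficient identity $\sum_{s+t=k}(-1)^t\binom{d}{s}\binom{d}{t} = [k=2m](-1)^m\binom{d}{m}$ (the coefficients of $(1-x^2)^d = (1-x)^d(1+x)^d$). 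Both sides then reduce to $\frac{\binom{2d}{d}}{4^d}\sum_m \binom{d}{m}\alpha_{2m}$, matching.

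With balance and orthogonality, the rescaled vector $w(\theta) := \sqrt 2 (\cos\theta\, x_0 + \sin\theta\, y_0)$ is a real unit vector for every $\theta$ (since $\|\cos\theta x_0 + \sin\theta y_0\|^2 = 1/2$ is constant), and homogeneity yields $p(w(\theta)) = 2^d p(\cos\theta x_0 + \sin\theta y_0)$. Multiplying the identity by $2^d$ gives $\frac{1}{2\pi}\int_0^{2\pi} p(w(\theta))\,d\theta = \frac{\binom{2d}{d}}{2^d} P_{\min} = P_{\min}/\delta(d)$; since each $p(w(\theta)) \geq p_{\min}$, the average-is-at-least-minimum inequality gives $p_{\min} \leq P_{\min}/\delta(d)$. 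The main obstacle is arranging the simultaneous orthogonality $\langle x_0, y_0\rangle = 0$ and balance $\|x_0\| = \|y_0\|$ of the minimizer: the one-parameter $U(1)$ action can enforce one condition but not both in general. I expect to handle this by dropping the balance requirement and instead invoking the homogeneity bound $p(w) \geq \|w\|^{2d} p_{\min}$ together with Jensen's inequality $\frac{1}{2\pi}\int_0^{2\pi} \|\cos\theta x_0 + \sin\theta y_0\|^{2d}\, d\theta \geq 2^{-d}$ (by convexity of $t \mapsto t^d$ applied to the mean $\|z_0\|^2/2 = 1/2$), which recovers the bound, possibly at the cost of a separate sign analysis on $P_{\min}$.
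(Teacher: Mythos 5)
Your overall route for the second inequality is a genuinely different and appealingly elementary one, and your key identity is in fact correct. To clarify one point you were unsure about: the averaging identity
\begin{align}
\frac{1}{2\pi}\int_0^{2\pi} p(\cos\theta\, x_0 + \sin\theta\, y_0)\, d\theta
= \frac{\binom{2d}{d}}{4^d}\, P(z_0, z_0^\dg)
\end{align}
holds for \emph{arbitrary} $x_0, y_0 \in \Vr$ --- the orthogonality $\langle x_0, y_0\rangle = 0$ is not needed. Both sides are functionals of the ``partial polarizations'' $\alpha_k = (p,\, x_0^{\otimes(2d-k)}\otimes y_0^{\otimes k})$ alone (using only the full symmetry of $p$), and both reduce to $\frac{\binom{2d}{d}}{4^d}\sum_m\binom{d}{m}\alpha_{2m}$; the left side uses the Wallis/Beta integral exactly as you write, and the right side collapses because $\sum_{s+t=k}(-1)^s\binom{d}{s}\binom{d}{t}$ vanishes for odd $k$ and equals $(-1)^m\binom{d}{m}$ for $k=2m$. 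So you should drop the attempt to normalize $\langle x_0, y_0\rangle = 0$ entirely. By contrast the paper's proof uses the partial-transpose invariance of $M(p)$ to get $P_{\min}=\tr\bigl((x_0x_0^\t+y_0y_0^\t)^{\otimes d}P\bigr)$, and then brings in a nontrivial ``length'' result of Reznick to express $M\bigl(((x_0^\t)^2+(y_0^\t)^2)^d\bigr)/\delta$ as a convex combination of $(v_iv_i^\t)^{\otimes d}$, closing with a technical two-variable minimization for $\delta\ge\delta(d)$. Your circle-average bypasses both Reznick and the lemma, which is attractive.

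The genuine gap lies in the closing step, and you have already put your finger on it. Without the (unenforceable) balance $\|x_0\|=\|y_0\|$, you invoke Jensen to get $\frac{1}{2\pi}\int\|v(\theta)\|^{2d}\,d\theta \ge 2^{-d}$, and then multiply through by $p_{\min}$; this works only when $p_{\min}\ge 0$. When $p_{\min}<0$ the inequality reverses and the argument gives nothing. You cannot push a ``sign analysis'' through here, because the claimed inequality $p_{\min}\le P_{\min}/\delta(d)$ is actually \emph{false} when $p_{\min}<0$ and $d\ge 2$: since $\delta(d)<1$ and $p_{\min}<0$, the two claimed bounds $P_{\min}\le p_{\min}$ and $\delta(d)p_{\min}\le P_{\min}$ are mutually inconsistent, and a concrete counterexample is $n=1$, $p(x)=-x^4$, where $p_{\min}=P_{\min}=-1$ but $P_{\min}/\delta(2)=-3/2 < p_{\min}$. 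This is not a defect unique to your proof --- the paper's own argument proves $p_{\min}\le P_{\min}/\delta$ with $\delta\ge\delta(d)$, which yields the theorem's right-hand bound only if $P_{\min}\ge 0$ --- but you should make the hypothesis explicit and note that in every downstream use of this theorem (namely on the forms $q^{(k)}=p-\eta_k\|x\|^{2d}$) one has $Q^{(k)}_{\min}\ge0$ by construction, so no generality is lost. Under that hypothesis your Jensen step closes cleanly and the proof is complete.
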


In particular, this theorem shows that ($\pmin > 0 \iff \Pmin > 0$). By Theorem~\ref{thm:intro_hsos}, we immediately obtain the following hierarchy of eigencomputations to determine if $\pmin>0$. For a positive integer $k \geq d$, let $P_k(z,z^\dg) := P(z,z^\dg) \cdot \norm{z}^{2(k-d)}\in \Herm(S^{k}(\Vc))$.

\begin{theorem}\label{thm:intro_simple}
Let $p \in S^{2d}(\V^*)$. If $\pmin >0$, then $P_k \in \Pos(S^k(\Vc))$ for $k$ large enough.
\end{theorem}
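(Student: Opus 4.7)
The plan is to derive the statement directly from the reduction of Theorem~\ref{thm:intro_reduction} combined with the HSOS convergence of Theorem~\ref{thm:intro_hsos}, using the eigenvalue reformulation recorded in the remark that follows the latter.

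First I would note that $\delta(d) = 2^d/\binom{2d}{d} > 0$, so the right-hand inequality $\pmin \leq \Pmin/\delta(d)$ from Theorem~\ref{thm:intro_reduction} rearranges to $\Pmin \geq \delta(d)\cdot \pmin$. Under the hypothesis $\pmin > 0$ this immediately gives $\Pmin > 0$, transferring strict positivity from the real optimization problem to its Hermitian counterpart $P(z,z^\dg) \in \Herm(S^d(\Vc))$.

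Next I would apply Theorem~\ref{thm:intro_hsos} to this Hermitian form. The theorem guarantees that the sequence of HSOS bounds $\mu_d \leq \mu_{d+1} \leq \dots$ converges to $\Pmin$. Since $\Pmin > 0$, there must exist some threshold $k_0 \geq d$ such that $\mu_k > 0$ for every $k \geq k_0$.

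Finally, I would appeal to the remark following Theorem~\ref{thm:intro_hsos}, which identifies $\mu_k$ with $\lambda_{\min}(P_k)$, the minimum eigenvalue of $P_k$ viewed as a Hermitian operator on $S^k(\Vc)$. Having $\mu_k > 0$ therefore forces $P_k$ to be positive definite as a Hermitian operator on $S^k(\Vc)$, which in particular means $P_k \in \Pos(S^k(\Vc))$, as desired. There is no real obstacle in this argument: all of the difficulty has been front-loaded into establishing the reduction in Theorem~\ref{thm:intro_reduction} and the HSOS convergence in Theorem~\ref{thm:intro_hsos}, and the present statement is essentially a one-line corollary of those two results stitched together by the inequality $\Pmin \geq \delta(d)\cdot \pmin$.
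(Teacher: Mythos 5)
Your proof is correct and follows exactly the route the paper intends (the paper presents Theorem~\ref{thm:intro_simple} as an immediate consequence of Theorem~\ref{thm:intro_reduction} combined with Theorem~\ref{thm:intro_hsos} and the remark identifying $\mu_k$ with $\lambda_{\min}(P_k)$). The only cosmetic note is that the conclusion $\mu_k > 0$ gives positive \emph{definiteness} of $P_k$, which is slightly stronger than membership in $\Pos(S^k(\Vc))$ (positive \emph{semi}definiteness), but this of course implies what is claimed.
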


\begin{remark}
Viewed as as real operator, $P_k$ is a \textit{Gram operator} for $p(x) \norm{x}^{2(k-d)}$, meaning that
\ba
\ip{x^{\otimes k}}{P_k x^{\otimes k}}=p(x) \norm{x}^{2(k-d)}
\ea
for all $x \in \Vr$. The RSOS hierarchy says that if $\pmin>0$, then there exists a positive semidefinite Gram operator for $p(x) \norm{x}^{2(k-d)}$ for $k$ large enough. By contrast, Theorem~\ref{thm:intro_simple} says that the \textit{single} Gram operator $P_k$ is positive semidefinite for $k$ large enough. We note that Theorem~\ref{thm:intro_simple} does not hold for other choices of Gram operators for $p$; see Remark~\ref{rmk:nec}.
\end{remark}

More generally, we obtain a hierarchy of eigencomputations for $\pmin$ with $O(1/k)$ convergence.
Let $N^{(d)}$ be the maximally symmetric Gram operator of $\norm{x}^{2d}$, viewed as a Hermitian form, and let
\ba
N_k^{(d)}(z,z^\dg):=N^{(d)}(z,z^\dg)\cdot  \norm{z}^{2(k-d)} \in \Herm(S^k(\Vc))
\ea
when $k \geq d$.  We obtain the following hierarchy of eigencomputations for $\pmin$:
%A \textit{generalized eigenvalue} of a pair of Hermitian operators $A,B \in \Herm(\complex^N)$ is a real number $\lambda \in \real$ for which $A v = \lambda B v$ for some $0 \neq v \in \complex^N$.
\begin{theorem}[HRSOS hierarchy]\label{thm:intro_hrsos}
Let $p \in S^{2d}(\Vr^*)$. For each $k \geq d$, let
\ba
\eta_k=\lambda_{\min}((N_k^{(d)})^{-1/2}P_k (N_k^{(d)})^{-1/2}).
\ea
Then $\eta_d \leq \eta_{d+1} \leq \dots$ and $\lim_k \eta_k = \pmin$. Furthermore, $\pmin-\eta_k = O(1/k)$ (suppressing dependence on $n,d,$ and $\norm{P}_{\infty}$).
\end{theorem}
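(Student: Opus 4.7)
The plan is to rewrite $\eta_k$ as the largest $\mu$ satisfying a Hermitian-SOS condition, and then apply the HSOS hierarchy (Theorem~\ref{thm:intro_hsos}) to the shifted form $P - \mu N^{(d)}$, using the real--Hermitian reduction (Theorem~\ref{thm:intro_reduction}) to control its minimum on the complex unit sphere. First I would verify that $N_k^{(d)}$ is positive definite, so that $\eta_k$ is well-defined; this follows because $N^{(d)}(z,z^\dg) \geq \delta(d) > 0$ on the complex unit sphere by Theorem~\ref{thm:intro_reduction} applied to $\norm{x}^{2d}$, and this strict positivity pushes through to $N_k^{(d)}$. Sandwiching with $(N_k^{(d)})^{1/2}$ then identifies $\eta_k$ with the largest $\mu \in \real$ such that $P_k \succeq \mu N_k^{(d)}$, equivalently $P_k - \mu N_k^{(d)} \in \Pos(S^k(\Vc))$. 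The upper bound $\eta_k \leq \pmin$ follows by testing at $v = x^{\otimes k}$ for unit real $x$: the Gram identities give $\ip{x^{\otimes k}}{P_k x^{\otimes k}} = p(x)$ and $\ip{x^{\otimes k}}{N_k^{(d)} x^{\otimes k}} = 1$, forcing $p(x) \geq \eta_k$ for all such $x$.

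Monotonicity $\eta_k \leq \eta_{k+1}$ is then a consequence of $\Pos$ being closed under products: a sum of Hermitian squares of degree-$k_1$ forms times a sum of Hermitian squares of degree-$k_2$ forms is again a sum of Hermitian squares (of degree $k_1+k_2$). Since $\norm{z}^2 = |z_1|^2 + \cdots + |z_n|^2 \in \Pos(S^1(\Vc))$, multiplying a certificate $P_k - \mu N_k^{(d)} \in \Pos(S^k(\Vc))$ by $\norm{z}^2$ yields $P_{k+1} - \mu N_{k+1}^{(d)} \in \Pos(S^{k+1}(\Vc))$, and hence $\eta_{k+1} \geq \mu$ for every admissible $\mu$.

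The main step is the $O(1/k)$ convergence rate. For this I would apply HSOS (Theorem~\ref{thm:intro_hsos}) to the shifted Hermitian form $\tilde{H}_\mu := P - \mu N^{(d)}$, which is the maximally symmetric Gram operator of the real polynomial $p(x) - \mu\norm{x}^{2d}$. Theorem~\ref{thm:intro_reduction} applied to this polynomial yields $\tilde{H}_\mu(z,z^\dg) \geq \delta(d)(\pmin - \mu)$ on the complex unit sphere whenever $\mu \leq \pmin$. The HSOS level-$k$ value $\mu_k(\tilde{H}_\mu)$ then satisfies $\mu_k(\tilde{H}_\mu) \geq \delta(d)(\pmin - \mu) - C\norm{\tilde{H}_\mu}_\infty/k$ for a constant $C = C(n,d)$, using the quantitative form of HSOS convergence with linear dependence on the form's operator norm. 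Since $\mu_k(\tilde{H}_\mu) \geq 0$ is equivalent to $P_k - \mu N_k^{(d)} \in \Pos(S^k(\Vc))$, i.e. $\eta_k \geq \mu$, choosing $\mu = \pmin - C\norm{\tilde{H}_\mu}_\infty/(\delta(d)k)$ and using $\norm{\tilde{H}_\mu}_\infty = O(\norm{P}_\infty)$ uniformly in a bounded range near $\pmin$ yields $\eta_k \geq \pmin - O(1/k)$, with the implicit constant depending on $n$, $d$, and $\norm{P}_\infty$.

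The main obstacle is extracting the correct quantitative form of the HSOS convergence rate---namely $H_{\min} - \mu_k(H) = O(\norm{H}_\infty \cdot \textup{poly}(n,d)/k)$ rather than a rate with an opaque $H$-dependent constant---so that the error term can be controlled uniformly as the shift $\mu$ varies. This refinement is standard in the quantum de Finetti approach to HSOS, and indeed the entire convergence argument can alternatively be phrased directly in de Finetti language, replacing HSOS by an approximation $\textup{tr}_{k-d}(\ket{v}\bra{v}) \approx \int \ket{z}\bra{z}^{\otimes d}\, d\mu(z)$ and integrating the pointwise inequality $P(z,z^\dg) \geq \pmin N^{(d)}(z,z^\dg)$ against $\mu$.
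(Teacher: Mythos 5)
Your proof is correct and follows essentially the same route as the paper: express $\eta_k$ as the largest $\mu$ with $P_k - \mu N_k^{(d)} \in \Pos(S^k(\Vc))$, apply the quantitative HSOS bound (Theorem~\ref{thm:hsos}) to the shifted Hermitian operator $P - \mu N^{(d)} = M(p - \mu\norm{x}^{2d})$, and invoke Theorem~\ref{thm:intro_reduction} on the same shifted polynomial. The paper instantiates this at the single value $\mu = \eta_k$ (setting $q^{(k)}(x) = p(x) - \eta_k\norm{x}^{2d}$ so that $\lambda_{\min}(Q_k^{(k)}) = 0$ automatically), while you sweep $\mu \leq p_{\min}$; these are the same argument, and your ``uniform $O(\norm{P}_\infty)$'' control on $\norm{\tilde H_\mu}_\infty$ is precisely what the paper makes explicit via the factor $1 + \kappa(N^{(d)})$.

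Your monotonicity argument is a clean primal alternative worth noting: closure of $\Pos$ under products, applied to $\norm{z}^2 \in \Pos(S^1(\Vc))$, upgrades a certificate $P_k - \mu N_k^{(d)} \in \Pos(S^k(\Vc))$ to $P_{k+1} - \mu N_{k+1}^{(d)} \in \Pos(S^{k+1}(\Vc))$ directly, since the two Hermitian forms agree and forms determine operators. The paper reaches the same monotonicity via a partial-trace/convexity argument on the dual side; both are valid, though yours sidesteps the need to identify an extremal $\psi$ for the generalized eigenvalue problem.

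One small but real imprecision in your opening paragraph: knowing $N^{(d)}(z,z^\dg)\ge\delta(d)>0$ for all unit $z\in\Vc$ says only that the Hermitian form of $N^{(d)}$ is strictly positive on product vectors $z^{\otimes d}$; it does not by itself imply the operator $N^{(d)}$ is positive definite on all of $S^d(\Vc)$, since a Hermitian operator whose quadratic form is positive on every product vector need not be PSD. The paper instead proves $N^{(d)} \succ 0$ via the integral formula $\ip{\psi}{N^{(d)}\psi} = \int (\psi^\t)^2(x)\,d\mu(x)$ over the real sphere (Proposition~\ref{prop:Mspos}), from which $N_k^{(d)} = \Pi_k\bigl(N^{(d)}\otimes\I_{\Vc}^{\otimes k-d}\bigr)\Pi_k \succ 0$ follows; you should cite that rather than the form-level bound.
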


We prove this using Theorems~\ref{thm:intro_hsos} and~\ref{thm:intro_reduction}. In the theorem statement, $\norm{P}_{\infty}$ is the operator norm of $P$. We note that $\eta_k$ is equal to the minimum generalized eigenvalue of $P_k$ and $N_k^{(d)}$ (see Section~\ref{sec:numerics}). This avoids the potentially costly computation of $(N_k^{(d)})^{-1/2}$.

\subsection{Our tensor optimization hierarchy (m-HRSOS)}

More generally, our techniques can be used to minimize multi-homogeneous forms over a product of spheres. Let $n_1,\dots, n_m$ and $D_1,\dots, D_m$ be positive integers, let $\V_i = \real^{n_i}$ for each $i \in [m]$, and let $p\in S^{D_1}(\V_1^*) \ootimes S^{D_m}(\V_m^*)$ be a multi-homogeneous form on $\V_1,\dots, \V_m$ of multidegree $D_1,\dots, D_m$.\footnote{Similarly as in Footnote~\ref{f1}, we technically define $S^{D_1}(\V_1^*) \ootimes S^{D_m}(\V_m^*)$ to be the tensor product of symmetric spaces, which we formally identify with the set of multi-homogeneous polynomials in Section~\ref{background}.} Consider the minimization problem
\ba\label{eq:pminbdintro}
p_{\min}:= \min_{\substack{v_j\in \V_j\\\norm{v_j}=1}} p(v_1,\dots, v_m).
\ea
%Tensors of the form $v_1^{\otimes D_1}\otimes \dots \otimes v_m^{\otimes D_m}$ are by definition elements of the~\textit{(real) Segre-Veronese variety}.
%corresponding to the integers $D_1,\dots, D_m$.
%A similar optimization problem was recently considered in~\cite{muller2023refinement}, although our setting is more general.
As mentioned above, this corresponds to minimizing a real tensor over the real spherical Segre-Veronese variety. Our hierarchy extends naturally to this setting, with similar convergence guarantees. We can (and will) assume that each $D_j=2d_j$ is even (see Proposition~\ref{prop:even_gen}). Similarly to before, we associate to $p$ a multi-homogeneous Hermitian form $P \in \Herm(S^{d_1}(\V_1^\complex)\ootimes S^{d_m}(\V_m^{\complex}))$, where $\V_i^{\complex}:=\complex^{n_i}$ (the complexification of $\V_i$). Letting
\ba\label{eq:multiherm}
P_{\min}:=\min_{\substack{\bfz = (z_1,\dots, z_m)\\ z_j \in \V_j^{\complex}\\ \norm{z_j}=1}} P(\bfz,\bfz^{\dg}) %\ip{z_1^{\otimes d_1} \ootimes z_m^{\otimes d_m}}{P z_1^{\otimes d_1} \ootimes z_m^{\otimes d_m}}
\ea
be the  Hermitian analogue of~\eqref{eq:pminbdintro}, we prove the following reduction from the real optimization problem~\eqref{eq:pminbdintro} to the Hermitian optimization problem~\eqref{eq:multiherm}:

\begin{theorem}
For any $p \in S^{2d_1}(\V_1^*)\ootimes S^{2d_m}(\V_m^*)$, it holds that
%\ba
%p_{\min}^{\complex}&= \min_{v_j \in \Sp(\complex^{n_j})} \tr((v_1v_1^*)^{\otimes d_1}\otimes \dots \otimes (v_m v_m^*)^{\otimes d_m} M(p)).
%\ea
\ba\label{eq:real_optbd_intro}
P_{\min} \leq p_{\min} \leq \frac{P_{\min}}{\delta(\bd)},
\ea
where $\delta(\bd)=\delta(d_1)\cdots \delta(d_m)$, and $\delta(d_i)$ is defined as in Theorem~\ref{thm:intro_reduction}.
\end{theorem}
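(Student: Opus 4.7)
The plan is to prove the two inequalities separately, extending the averaging argument behind Theorem~\ref{thm:intro_reduction} factor by factor. The lower bound $P_{\min} \leq p_{\min}$ is immediate: any real unit tuple $\bfx = (x_1,\dots,x_m)$ with $\norm{x_j} = 1$ is also a complex unit tuple, and because $P$ is the Hermitian extension of $p$, one has $P(\bfx,\bfx^\dg) = p(\bfx)$ for real $\bfx$. Taking $\bfx$ to be a real minimizer of $p$ on the product of spheres yields $P_{\min} \leq p_{\min}$.

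For the upper bound, write each $z_j = u_j + iv_j$ with $u_j,v_j \in \V_j$ (so $\norm{u_j}^2 + \norm{v_j}^2 = 1$ for unit $z_j$), and set $w_j(\theta_j) := \cos\theta_j\, u_j - \sin\theta_j\, v_j = \tfrac12\bigl(e^{i\theta_j}z_j + e^{-i\theta_j}\bar z_j\bigr) \in \V_j$. Expanding $w_j^{\otimes 2d_j}$ via the symmetric binomial theorem and using $\int_0^{2\pi} e^{ik\theta}\,d\theta=0$ for $k\neq 0$ gives the single-factor tensor identity
\ba\label{eq:plan_tensor_id}
\frac{1}{2\pi}\int_0^{2\pi} w_j(\theta_j)^{\otimes 2d_j}\, d\theta_j = \frac{\binom{2d_j}{d_j}}{4^{d_j}}\cdot \mathrm{Sym}\bigl(z_j^{\otimes d_j}\otimes \bar z_j^{\otimes d_j}\bigr),
\ea
which is the technical core of Theorem~\ref{thm:intro_reduction}. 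Tensoring~\eqref{eq:plan_tensor_id} over $j=1,\dots,m$ and pairing against the symmetric tensor representing $p$ (using that $P(\bfz,\bfz^\dg)$ is the pairing of $p$ against $\bigotimes_j \mathrm{Sym}(z_j^{\otimes d_j}\otimes\bar z_j^{\otimes d_j})$) yields the multi-homogeneous averaging identity
\ba\label{eq:plan_avg_mh}
P(\bfz,\bfz^\dg) = \Bigl(\prod_{j=1}^m 2^{d_j}\delta(d_j)\Bigr)\cdot \frac{1}{(2\pi)^m}\int_{[0,2\pi)^m} p\bigl(w_1(\theta_1),\dots,w_m(\theta_m)\bigr)\, d\theta_1\cdots d\theta_m.
\ea
Applying~\eqref{eq:plan_avg_mh} to the pointwise nonnegative form $q(\bfx) := p(\bfx) - p_{\min}\prod_j \norm{x_j}^{2d_j}$ gives $P(\bfz,\bfz^\dg) \geq p_{\min}\cdot \mathbf{N}(\bfz,\bfz^\dg)$, where $\mathbf{N}$ is the Hermitian extension of $\prod_j \norm{x_j}^{2d_j}$ and factorizes as $\mathbf{N}(\bfz,\bfz^\dg) = \prod_j N^{(d_j)}(z_j,z_j^\dg)$.

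It then suffices to show $\mathbf{N}(\bfz,\bfz^\dg)\geq \delta(\bd)$ on the product sphere. Applying~\eqref{eq:plan_avg_mh} in each factor with $p(x_j)=\norm{x_j}^{2d_j}$ gives $N^{(d_j)}(z_j,z_j^\dg) = 2^{d_j}\delta(d_j)\cdot\bE_{\theta_j}[\norm{w_j(\theta_j)}^{2d_j}]$; since $\bE_{\theta_j}[\norm{w_j(\theta_j)}^2] = \tfrac12(\norm{u_j}^2+\norm{v_j}^2) = \tfrac12$, Jensen's inequality applied to the convex map $t \mapsto t^{d_j}$ on $[0,\infty)$ gives $\bE_{\theta_j}[\norm{w_j}^{2d_j}] \geq (\tfrac12)^{d_j}$, so $N^{(d_j)}(z_j,z_j^\dg) \geq \delta(d_j)$, and multiplying across $j$ yields $\mathbf{N}(\bfz,\bfz^\dg) \geq \delta(\bd)$. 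Combining with the previous display gives $P(\bfz,\bfz^\dg) \geq \delta(\bd)\,p_{\min}$, equivalent to $p_{\min} \leq P_{\min}/\delta(\bd)$. The main new ingredient beyond Theorem~\ref{thm:intro_reduction} is~\eqref{eq:plan_avg_mh}; this follows directly from the tensor-product structure of the maximally symmetric Gram operator $P$, which lets the single-factor identity~\eqref{eq:plan_tensor_id} be applied in each slot independently via Fubini, with the constants aggregating to $\prod_j 2^{d_j}\delta(d_j)$.
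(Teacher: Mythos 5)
Your argument is correct and arrives at the paper's conclusion, but the route is genuinely different from the paper's. Both proofs start from the same observation about the maximally‑symmetric Gram operator, namely that $P(\bfz,\bfz^\dg)$ equals the bilinear pairing of $p$ with $\bigotimes_j \mathrm{Sym}(z_j^{\otimes d_j}\otimes\bar z_j^{\otimes d_j})$ (in the paper this is phrased via $\tr[(z_jz_j^\dg)^{\otimes d_j}P]$ and the partial‑transpose invariance $P^{\t_1}=P$). From there the paper replaces each $z_jz_j^\dg$ by $x_jx_j^\t+y_jy_j^\t$ factor by factor, invokes Reznick's Carath\'eodory‑type decomposition \cite[Cor.~5.6]{reznick2013length} to write the resulting Gram operator as a convex combination of real rank-one powers $(vv^\t)^{\otimes d_j}$ scaled by $\delta = \tr(\cdot)$, and then establishes $\delta\ge\delta(d_j)$ by the fairly lengthy combinatorial Lemma~\ref{lemma:technical} (Appendix~\ref{ap:lemma}). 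You instead observe that the symmetrized tensor $\mathrm{Sym}(z_j^{\otimes d_j}\otimes\bar z_j^{\otimes d_j})$ is an explicit circle average of $w_j(\theta_j)^{\otimes 2d_j}$ with $w_j(\theta_j)$ real, which yields the averaging identity~\eqref{eq:plan_avg_mh} with the constant $\prod_j 2^{d_j}\delta(d_j)$ coming directly from the binomial expansion; you then apply this to the globally nonnegative shifted form $q=p-p_{\min}\prod_j\norm{x_j}^{2d_j}$ and finish by Jensen's inequality on $t\mapsto t^{d_j}$ to get $N^{(d_j)}(z_j,z_j^\dg)\ge\delta(d_j)$. This bypasses the Reznick black box and Lemma~\ref{lemma:technical} entirely and still recovers the sharp constant $\delta(d_j)=2^{d_j}\binom{2d_j}{d_j}^{-1}$, which is a real simplification; the tradeoff is that the paper's route additionally exhibits $P_{\min}/\delta$ as a convex combination of at most $\prod_j(d_j+1)$ point evaluations of $p$, a Carath\'eodory-type structure that your continuous average does not isolate.

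Two small points worth noting. First, the step from $Q(\bfz,\bfz^\dg)\ge p_{\min}\,\delta(\bfd)$ to the upper bound $p_{\min}\le P_{\min}/\delta(\bfd)$ needs $p_{\min}\ge 0$ to run cleanly (so that $p_{\min}\mathbf{N}\ge p_{\min}\delta(\bfd)$); when $p_{\min}<0$ the inequality reverses. The paper's own proof (bounding $\delta\ge\delta(d_j)$ and then dividing $P_{\min}$ by it) has exactly the same restriction, and in the paper's application the theorem is always invoked with the shifted form having nonnegative minimum, so this is a feature of the statement rather than a defect of your proof; it is worth being explicit about it nonetheless. Second, the identity $P(\bfz,\bfz^\dg)=\bigl(p,\bigotimes_j\mathrm{Sym}(z_j^{\otimes d_j}\otimes\bar z_j^{\otimes d_j})\bigr)$ is asserted rather than derived; it is indeed equivalent to the maximal symmetry of $M(p)$, but since it is doing all the load-bearing work in place of the paper's partial-transpose manipulation, a short verification (e.g.\ reducing to one factor and checking $\ip{z^{\otimes d}}{M(p)z^{\otimes d}} = (p,\mathrm{Sym}(z^{\otimes d}\otimes\bar z^{\otimes d}))$ on rank-one $p$) would make the argument fully self-contained.
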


The quantum de Finetti theorem implies that the optimization problem~\eqref{eq:multiherm} can be solved by a hierarchy of eigencomputations converging as $O(1/k)$ in the level $k$. We combine this with Theorem~\ref{eq:real_optbd_intro} to prove analogous statements to Theorems~\ref{thm:intro_simple} and~\ref{thm:intro_hrsos}; see Section~\ref{sec:tensor}.

\subsection{Related work}\label{sec:related_work}

Due to the limited capacity of current SDP solvers, there has been much work on developing practical alternatives to the sum-of-squares hierarchy for constrained polynomial optimization problems~\cite{helmberg2000spectral,chandrasekaran2016relative,lasserre2017bounded,dressler2017positivstellensatz,AM19,wang2020second,mai2022exploiting,mai2023hierarchy,cristancho2024harmonic}. In particular, several techniques have been developed for optimization over the real sphere~\cite{doherty2012convergence,laurent2019notes,fang2021sum,cristancho2024harmonic}.

Notably, an optimization-free \textit{harmonic hierarchy} is developed in~\cite{cristancho2024harmonic}. This is closely related to harmonic analysis on spheres, and builds on previous work of~\cite{blekherman2004convexity,fang2021sum,slot2022sum,slot2023sum}. This hierarchy depends on a choice of cubature rule and kernel, and the authors give an explicit choice for which the harmonic hierarchy converges (multiplicatively) as $O(1/k^2)$ in the level $k$, with the $k$-th level minimizing a certain degree-$2k$ polynomial (obtained from $p$ and a choice of kernel) over a cubature rule of size $2(k+1)^{n-1}$. While their convergence in the level is faster than ours, the discrete optimization required at each level is prohibitively expensive when $n$ is large. It is possible that a smaller cubature rule could lead to computational savings, although the problem of explicitly constructing small cubature rules is largely unresolved (see~\cite{cristancho2024harmonic} and the references therein). Numerical experiments in Section~\ref{sec:numerics} show that even for small examples such as the Motzkin polynomial, the harmonic hierarchy is significantly slower than ours in terms of time budget needed to achieve a given bound. We note that our hierarchy has the additional advantages of being completely canonical, and generalizing easily to optimization over the Segre-Veronese variety.

%Our hierarchy relates real polynomial optimization over the sphere in $\real^n$ (and more generally, tensor optimization over the real Segre-Veronese variety) to Hermitian optimization over the sphere in $\complex^n$ (and more generally, optimization of a multi-homogeneous Hermitian form).
Our hierarchy uses Hermitian optimization to solve real optimization problems. In the reverse direction, one can view a Hermitian form in $\Herm(S^d(\complex^n))$ as a real form in $S^{2d}(\real^{2n})$ by the map $H(z,z^\dg)\mapsto p(x,y)=H(x+iy,x^\t-iy^\t)$. (The natural extension of this map to the space of Hermitian polynomials is invertible, but does not preserve homogeneity in the reverse direction.) In particular, the real sum-of-squares hierarchy can be used for Hermitian optimization. This observation has been used in several works, e.g.~\cite{d2009polynomial,harrow2017improved,fang2021sum,wang2023real}.

Refs.~\cite{d2011hermitian, d2019hermitian} are excellent sources of background material on Hermitian optimization. The HSOS hierarchy for optimizing a Hermitian form over the sphere was first developed in~\cite{quillen1968representation} and then independently in~\cite{catlin1995stabilization}. This hierarchy was proven to converge additively as $O(1/k)$ in~\cite{to2006effective}. By the duality between separable quantum states and non-negative Hermitian forms elucidated in e.g.~\cite{fang2021sum,muller2023refinement}, some results known as~\textit{quantum de Finetti theorems} can be used for Hermitian optimization. In particular, the result of~\cite{hudson1976locally} can be used to obtain the HSOS hierarchy, and the result of~\cite{christandl2007one} can be used to prove an effective version similar to that of~\cite{to2006effective}. See Theorem~\ref{thm:hsos}. See~\cite{drouot2013quantitative} and the references therein for further effective versions of Quillen's result.

%An alternative to the HSOS hierarchy is to regard Hermitian optimization over the sphere in $\complex^n$ as a real optimization problem over the sphere in $\real^{2n}$, and use the real sum-of-squares hierarchy.

Hierarchies for \textit{constrained} Hermitian optimization problems are developed in~\cite{catlin1999isometric,d2009polynomial,DJLV24} and an effective version of~\cite{catlin1999isometric} is obtained in~\cite{tan2021effective}, although the quantities involved in their bounds are difficult to calculate. %The work~\cite{DJLV24} develops a related hierarchy of eigencomputations for Hermitian optimization under equality constraints; see Section~\ref{constraints}.

%The task of optimizing a Hermitian operator $H \in \Herm(\complex^{n_1} \otimes \complex^{n_2})$ over the Segre variety has been widely studied in the context of quantum information theory~\cite{HM10,harrow2017improved}.

Various techniques have been developed to approximate the tensor spectral norm. The tensor spectral norm was shown to be NP-Hard in~\cite{HL}. Sum-of-squares hierarchies based on semidefinite programming are proposed and analyzed in~\cite{nie2014semidefinite,harrow2017improved,fang2021sum}. Locally convergent techniques have also been developed, including alternating least squares~\cite{de2000best,FMPS13} and the higher order power method~\cite{de1995higher}.

%The tensor spectral norm plays an important role in tensor completion and recovery problems~\cite{de2000best,yuan2016tensor,song2019tensor}.

The works~\cite{HopkinsSSS16,schramm2017fast} give spectral alternatives to the real sum-of-squares hierarchy for tensor decompositions and finding planted sparse vectors, although these are analyzed in the average-case setting. Further alternatives to the real sum-of-squares hierarchy for constrained polynomial optimization were developed in~\cite{helmberg2000spectral,mai2022exploiting},
%,mai2023hierarchy},
although these still require minimizing the largest eigenvalue of a matrix pencil. A spectral hierarchy of \textit{upper bounds} on constrained polynomial minimization problems was developed in~\cite{lasserre2011new}, and its convergence was analyzed in~\cite{de2017convergence,de2022convergence}. For optimization on the sphere, a convergence rate of $\Theta(1/k^2)$ was proven in~\cite{de2022convergence}.

%Further techniques for solving Hermitian optimization problems are developed in~\cite{quillen1968representation, catlin1995stabilization,catlin1999isometric,d2009polynomial,d2011hermitian, d2019hermitian,fang2021sum}.  Bounds on the convergence rate of this hierarchy
%
%
%
%
%Hermitian optimization has found applications in imaging science \cite{fogel2016phase}, signal processing \cite{aubry2013ambiguity}, and quantum information theory \cite{DPS04,hilling2010geometric,HM10,fang2021sum} among many others~\cite{wang2023real}.
%
%
%In particular, spectral methods for constrained polynomial optimization problems were developed in~\cite{helmberg2000spectral,mai2022exploiting},
%%,mai2023hierarchy},
%although these still require minimizing the largest eigenvalue of a matrix pencil.
%
%
%
%We note that other works have also designed alternatives to the real sum-of-squares hierarchy that are based on spectral or eigenvector computation~\cite{HopkinsSSS16,schramm2017fast}. However these algorithms are specialized for average-case analysis (i.e., random instances) of tensor decompositions and finding planted sparse vectors. Related spectral methods for polynomial optimization were studied in~\cite{mai2022exploiting,mai2023hierarchy}, although these methods still require optimizing the minimum eigenvalue of a matrix over a

\section{Background and notation}\label{background}

%Let $\field$ be either $\real$ or $\complex$, and let $\V=\V_{\field}$ be an finite dimensional Hilbert space over $\field$ with positive definite inner product $\ip{\cdot}{\cdot}$ (which we take to be sesquilinear in the first argument if $\field=\complex$). This identifies $\V \cong \V^*$ by $v^* = \ip{v}{-}$ (this isomorphism is \textit{antilinear} if $\field=\complex$). Let $(\cdot, \cdot) : \V^* \times \V \rightarrow \field$ be the bilinear form $(f,v) = f(v)$. Let $\I_{\V}$ be the identity map on $\V$. The \textit{complexification} of $\Vr$ is the complex Hilbert space $\Vc= \Vr \otimes_{\real} \complex$ with inner product extended from $\Vr$ sesquilinearly.

In this section we describe some necessary background and notation for the remainder of the paper. We prefer to work coordinate-independently, in order to ensure that our approach does not depend on an arbitrary choice of basis. In an effort to appeal to readers who are unconcerned with such technicalities, we also describe how these definitions manifest in coordinates.

\subsubsection*{Finite-dimensional vector spaces, traces, and partial traces}

For vector spaces $\U=\field^{n_1}, \V=\field^{n_2}$ over a field $\field$, let $\Hom(\U,\V)$ be the set of $n_2 \times n_1$ matrices, and let $\End(\V)=\Hom(\V,\V)$ be the set of $n_1 \times n_1$ matrices. More generally, for finite-dimensional $\field$-vector spaces $\U, \V$ let $\Hom(\U,\V)$ be the set of linear maps (homomorphisms) from $\U$ to $\V$, and let $\End(\V)=\Hom(\V,\V)$ be the set of endomorphisms of $\V$.

Let $\I_{\V}\in \End(\V)$ be the identity map on $\V$.

For $\V=\field^n$, let $\V^*$ be the set of row vectors of length $n$, and let $(\cdot, \cdot) : \V^* \times \V \rightarrow \field$ be the dot product between row and column vectors. More generally, for a finite-dimensional $\field$-vector space $\V$ let $\V^*:=\Hom(\V,\field)$ be the \textit{dual vector space} to $\V$, and let $(\cdot, \cdot) : \V^* \times \V \rightarrow \field$ be the bilinear form $(f,v) = f(v)$. %For example, if $\V=\field^{n}$, then $\V^*$ can be identified with the set of row vectors of length $n$, and $f(v)$ is the dot product between the row vector $f\in (\field^n)^*$ and the column vector $v \in \field^n$.

%If $\V=\field^n$ is the set of column vectors of length $n$, then $\V^*=(\field^n)^*$ is the set of row-vectors of length $n$, $\I_{\V}$ is the identity matrix and $(f,v)=f \cdot v$ is the dot product.

%For a homomorphism $M \in \Hom(\U,\V)$, let $M^* \in \Hom(\V^*,\U^*)$ be the \textit{adjoint} defined by $M^*f (u)=f(Mu)$. 

The \textit{trace map} $\tr \in \End(\V)^*$ is the map that sends an operator $M$ to its trace. For an operator ${M \in \End(\V^{\otimes d})},$ let
\ba
\tr_1(M):=(\tr \otimes \I_{\End(\V^{\otimes d-1})})(M) \in \End(\V^{\otimes {d-1}})
\ea
be the partial trace of $M$ in the first factor. Similarly, let $\tr_j(M)$ be the partial trace of $M$ in the $j$-th factor. For a subset $S \subseteq [d]$, let $\tr_S(M) \in \End(\V^{\otimes {d-|S|}})$ be the partial trace of $M$ in the factors indexed by $S$. For an integer $j \in [d]$, let $[j\cdot \cdot d]=\{j,j+1,\dots, d\} \subseteq [d]$. See also~\cite{doherty2012convergence} or~\cite{Wat18}.

\subsubsection*{Finite-dimensional Hilbert spaces, Euclidean adjoint, and Hermitian adjoint}

For a real matrix $A$ we let $A^\t$ be the transpose of $A$, and for a real vector $x$ we let $x^\t$ be the transpose of $x$. For a complex matrix $M$ we let $M^{\dg}$ be the conjugate-transpose of $M$, and for a complex vector $z$ we let $z^{\dg}$ be the conjugate-transpose of $z$.

More generally, let $\field\in \{\real,\complex\}$ and let $\V_{\field}, \U_{\field}$ be finite dimensional Hilbert spaces over $\field$ with positive definite inner products $\ip{\cdot}{\cdot}$ (which we take to be sesquilinear in the first argument if $\field=\complex$). For $A \in \Hom(\Vr,\U_{\real})$, let  $A^{\t} \in \Hom(\U_{\real},\Vr)$ be the Euclidean adjoint defined by $\ip{A^\t u}{v}=\ip{u}{Av}$, and for $M \in \Hom(\Vc,\U_{\complex})$ let $M^{\dg} \in \Hom(\U_{\complex},\Vc)$ be the Hermitian adjoint defined by $\ip{M^{\dg} u}{v}=\ip{u}{M v}$. In particular, for a vector $x\in \Vr$ we have $x^{\t}=\ip{x}{-}\in\Vr^*$, and for a vector $z \in \Vc$ we have $z^{\dg}=\ip{z}{-}\in \Vc^*$.

%If $\Vr=\real^n$ with the standard inner product, then $A^\t$ is the transpose of $A$ and $x^\t$ is the transpose of $x$. If $\Vc=\complex^n$ with the standard inner product, then $M^{\dg}$ is the conjugate-transpose of $M$ and $z^{\dg}$ is the conjugate-transpose of $z$.

\subsubsection*{Complexification}

Given a real vector space, \textit{complexification} is a canonical way to view it as a subset of a complex vector space. For example, the complexification of $\real^n$ is $\complex^n$. The notion of complexification of an abstract vector space is also important, as it helps ensure that our approach is completely canonical (i.e. is not basis-dependent). The reader unconcerned with such technicalities can safely skip the rest of this subsection.

The \textit{complexification} of a finite dimensional real Hilbert space $\Vr$ is the complex Hilbert space $\Vr \otimes_{\real} \complex$ with inner product extended from $\Vr$ sesquilinearly.

If $\Vc$ is the complexification of $\Vr$, and $M=A+iB \in \End(\Vc)$ with $A,B \in \End(\Vr)$, then $M^{\t} \in \End(\Vc)$ is well-defined by $M^{\t}=A^{\t}+iB^{\t}$. If $\Vr=\real^n$, then $M^{\t}$ is the transpose of $M$.

\subsubsection*{Symmetric, Hermitian, and positive semidefinite operators}

For finite dimensional Hilbert spaces $\Vr=\real^n$ and $ \Vc=\complex^n$, let $\Sym(\Vr) \subseteq \End(\Vr)$ be the set of \textit{symmetric operators} satisfying $M^{\t}=M$, and let $\Herm(\Vc) \subseteq \End(\Vc)$ be the set of \textit{Hermitian operators} satisfying $M^{\dg}=M$. Let $\setft{O}(\Vr) \subseteq \End(\Vr)$ be the set of \textit{orthogonal operators} satisfying $M M^{\t}=\I_{\Vr}$.

Let $\V=\field^n$ with $\field \in \{\real, \complex\}$. We says that an operator $M\in \End(\V)$ is \textit{positive semidefinite} $M \succeq 0$ if $\ip{v}{Mv} \geq 0$ for all $v \in \V$. Let $\Pos(\V)$ be the set of positive semidefinite operators on $\V$. We say that $M$ is \textit{positive definite} $M \succ 0$ if $\ip{v}{Mv} > 0$ for all $v \neq 0$.

\subsubsection*{Viewing a real operator as a complex operator}

An $n \times n$ real matrix can be viewed as an $n \times n$ complex matrix with real entries. In other words, a real matrix in $M\in \End(\real^n)$ can be viewed as a matrix $M \in \End(\complex^n)$, and we often abuse notation in this way. In this subsection, we make this identification precise for abstract real Hilbert spaces.

Let $\Vr$ be a real Hilbert space with complexification $\Vc$. To a real operator $M \in \End(\Vr)$ we associate the complex operator $M \otimes \I_{\complex} \in \End(\Vc)$, and frequently abuse notation by denoting this operator also as $M \in \End(\Vc)$. %For example, for $p \in S^{2d}(\Vr^*)$ and $P \in \Sym(S^d(\Vr))$ the maximally symmetric Gram matrix of $p$, we often regard $P$ as an element of $\Herm(S^d(\Vc))$. Similarly, since $\I_{\Vc}=\I_{\Vr} \otimes \I_{\complex}$ and $\Pi_{\Vc,d} = \Pi_{\Vr,d}\otimes \I_{\complex}$, we often identify $\I_{\Vc}$ with $\I_{\Vr}$ and $\Pi_{\Vc,d}$ with $\Pi_{\Vr,d}$.

%If $\Vr=\real^n$, which has complexification $\complex^n$, then this corresponds to viewing a matrix in $\End(\real^n)$ as a (real-valued) matrix in $\End(\complex^n)$.

% Let $O(\Vr)\subseteq \End(\Vr)$ be the set of orthogonal operators, satisfying $M^\t M=\I_{\Vr}$. Let $\Unitary(\V)$ be the set of orthogonal operators if $\field=\real$, and the set of unitary operators if $\field=\complex$.

\subsubsection*{The operator norm and trace norm}

Let $\norm{M}_{\infty}= \max_{\norm{v}=1} \ip{v}{Mv}$ be the \textit{operator norm} of $M$ and let $\norm{M}_1=\max_{\norm{N}_\infty=1} \tr(N^{\Delta} M)$ be the \textit{trace norm} of $M$, where $\Delta=\t$ if $\field=\real$ and $\Delta=\dg$ if $\field=\complex$.

%if $\field=\complex$, and let $\Pos(\Vc)\subseteq \Herm(\Vc)$ be the set of positive semidefinite operators.

\subsubsection*{Homogeneous polynomials as symmetric tensors}

Let $\V=\field^n$ where $\field \in \{\real, \complex\}$. The permutation group $\frakS_d$ acts on $\V^{\otimes d}$ by permuting tensor factors
\ba
\sigma \cdot (v_1 \otimes \dots \otimes v_{d})=v_{\sigma^{-1}(1)}\otimes \dots \otimes v_{\sigma^{-1}(d)},
\ea
extended linearly. Define the \textit{symmetric subspace} $S^{d}(\V) \subseteq \V^{\otimes d}$ to be the linear subspace of tensors invariant under $\frakS_d$. Let $\Pi_d=\Pi_{\V,d} = \frac{1}{d!} \sum_{\sigma \in \frakS_d} \sigma $ be the orthogonal projection onto the symmetric subspace.
%
%
%$\theta$ for which $U_{\sigma} \theta = \theta$ for all $\sigma \in \frakS_d$. Let $\Pi_d = \frac{1}{d!} \sum_{\sigma \in \frakS_d} U_{\sigma}$ be the orthogonal projection onto the symmetric subspace.

As mentioned in the introduction, we can identify $S^{d}(\V^*)\subseteq (\V^*)^{\otimes d}$ with the space of homogeneous polynomials of degree $d$ on $\V$ via the isomorphism that associates a symmetric tensor $p$ with the homogeneous polynomial $p(v):=(p, v^{\otimes d})$. In the reverse direction, this isomorphism sends a monomial $x_{i_1}\cdots x_{i_d}$ to the symmetric tensor $\frac{1}{d!} \sum_{\sigma \in \frakS_d} x_{i_{\sigma(1)}} \ootimes x_{i_{\sigma(d)}}$. In an abuse of notation, we will invoke this isomorphism without comment, and refer to both the symmetric tensor and homogeneous polynomial by the same symbol $p$. We will also refer to elements of $S^d(\V^*)$ as both symmetric tensors and homogeneous polynomials (or forms).

For $q \in S^c(\V^*)$ we let $p \cdot q := \Pi_{c+d}(p\otimes q) \in S^{c+d}(\V^*)$, which is the symmetric tensor that corresponds to the product of the polynomials $p$ and $q$. This is easily verified by noting that $(\Pi_{c+d}(p\otimes q), v^{\otimes {c+d}})=p(v) q(v)$.

\subsubsection*{Multi-homogeneous polynomials as tensors}

More generally, we also view the tensor product space $S^{d_1}(\V_1^*)\ootimes S^{d_m}(\V_m^*)$ as the space of multi-homogeneous polynomials on $\V_1,\dots, \V_m$ of multi-degree $d_1,\dots, d_m$ via the isomorphism that associates a tensor  $p \in S^{d_1}(\V_1^*)\ootimes S^{d_m}(\V_m^*)$ with the multi-homogeneous polynomial
\ba
p(v_1,\dots, v_m):=\bip{p}{v_1^{\otimes d_1}\ootimes v_m^{\otimes d_m}}.
\ea
We will invoke this isomorphism without comment, and refer to both the tensor and multi-homogeneous form by the same symbol $p$.

For $q \in S^{c_1}(\V_1^*)\ootimes S^{c_m}(\V_m^*)$, we let
\ba
p \cdot q := (\Pi_{c_1+d_1} \ootimes \Pi_{c_m+d_m})(p \otimes q),
\ea
which corresponds to the product of multi-homogeneous polynomials.

% to elements of $\U$ as \textit{multihomogeneous forms}, or \textit{multilinear forms} if $d_1=\dots=d_m=1$. We also refer to elements of $\U$ as \textit{tensors}.

\subsection{The maximally symmetric Gram operator $M(p)$}

Let $\Vr=\real^n$. For a real form $p \in S^{2d}(\Vr^*)$ of even degree, we say that an operator $M \in \Sym(S^d(\Vr))$ is a \textit{Gram operator} for $p$ if $\ip{x^{\otimes d}}{M(p)x^{\otimes d}}=p(x)$ for all $x \in \Vr$. We note that this differs from the usual definition, which requires a Gram operator to be positive semidefinite.

We define the \textit{maximally symmetric Gram operator} $M(p) \in \Sym(S^d(\Vr))$ to be the operator obtained from $p$ by the sequence of maps
\ba\label{eq:injects}
S^{2d}(\Vr^*) \injects S^d(\Vr^*) \otimes S^d(\Vr^*) \cong S^d(\Vr^*) \otimes S^d(\Vr) \cong \End(S^d(\Vr)).
\ea
Here, the inclusion is obvious, the first isomorphism uses the identification $\Vr \cong \Vr^*$ via the inner product, and the second isomorphism is canonical. $M(p)$ is precisely the operator introduced in~\cite[Section 3]{doherty2012convergence} and \cite[Lemma 1.2]{laurent2019notes}. $M(p)$ is also called the \textit{central catalecticant} of $p$. We will frequently let $\Vc=\complex^n$ (the complexification of $\Vr$), and regard $M(p)$ as an element of $\Herm(S^d(\Vc))$.

In coordinates, $M(p)$ is a (generalized) Hankel matrix. See Appendix~\ref{ap:mk}.

\subsubsection*{Partial transpose characterization of $M(p)$}
For $M \in \End(\Vr^{\otimes d})$, let
\ba
M^{\t_1}:=(\Trans \otimes \I_{\End(\Vr^{\otimes d-1})})(M).
\ea
be the partial transpose (or \textit{partial Euclidean adjoint}) of $M$. Note that if $M\in \End(S^{d}(\Vr))$ and $M^{\t_1}=M$, then $M$ is \textit{maximally symmetric} in the sense of~\cite[Section 3]{doherty2012convergence} or~\cite[Lemma 1.2]{laurent2019notes}, because the group $\frakS_d \times \frakS_d$ combined with any interleaving transposition generates $\frakS_{2d}$. The following proposition is straightforward.

\begin{prop}\label{prop:sym}
For a real form $p \in S^{2d}(\Vr^*)$ of even degree, $M(p) \in \Sym(S^{d}(\Vr))$ is the unique Gram operator for which $M(p)^{\t_1}=M(p)$.
\end{prop}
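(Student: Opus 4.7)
The plan is to prove existence and uniqueness separately, working coordinate-freely in the common space $(\Vr^*)^{\otimes 2d}$ and exploiting the fact that all relevant symmetries of an operator on $S^d(\Vr)$ correspond to concrete permutations in $\frakS_{2d}$ acting on that ambient tensor space.

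For existence, I would first unpack the chain \eqref{eq:injects}. Under the inner-product identification $\Vr \cong \Vr^*$, an operator $M\in\End(\V_{\real}^{\otimes d})$ corresponds to a tensor in $(\Vr^*)^{\otimes 2d}$ with its first $d$ indices coming from the codomain and its last $d$ from the domain, the three operations of interest being: (i) taking adjoint $M \mapsto M^\t$, which swaps the blocks $\{1,\dots,d\}$ and $\{d+1,\dots,2d\}$; (ii) restricting to $\End(S^d(\Vr))$, which imposes $\frakS_d \times \frakS_d$-invariance on the two blocks; and (iii) partial transpose $\t_1$, which (by the definition of $\Trans\otimes\I$ applied to the first tensor factor of $\End(\Vr)^{\otimes d}$) corresponds to the transposition $(1,d+1) \in \frakS_{2d}$. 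Since $p \in S^{2d}(\Vr^*)$ is invariant under the full group $\frakS_{2d}$, it is automatically invariant under all three, so $M(p) \in \Sym(S^d(\Vr))$ and $M(p)^{\t_1} = M(p)$. The Gram property is then immediate: by construction,
\[
\ip{x^{\otimes d}}{M(p)\,x^{\otimes d}} = \bip{p}{x^{\otimes d}\otimes x^{\otimes d}} = \bip{p}{x^{\otimes 2d}} = p(x).
\]

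For uniqueness, suppose $M \in \Sym(S^d(\Vr))$ is any Gram operator for $p$ with $M^{\t_1}=M$. Viewed in $(\Vr^*)^{\otimes 2d}$, the membership $M\in \End(S^d(\Vr))$ gives $\frakS_d \times \frakS_d$-invariance on the two blocks, and $M^{\t_1}=M$ gives invariance under the interleaving transposition $(1,d+1)$. By the observation invoked just before the proposition, these generate all of $\frakS_{2d}$, so $M$ lies in $S^{2d}(\Vr^*)$. Both $M$ and $M(p)$ are therefore symmetric tensors of degree $2d$ with $\bip{M}{x^{\otimes 2d}} = p(x) = \bip{M(p)}{x^{\otimes 2d}}$ for every $x\in\Vr$; by the standard polarization identity for symmetric tensors (i.e. a symmetric tensor is determined by its values on $x^{\otimes 2d}$), this forces $M = M(p)$.

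The only step that requires any real care is verifying that the partial transpose $\t_1$ on $\End(\Vr^{\otimes d})$ corresponds, after the identification $\End(\Vr)\cong \Vr\otimes\Vr^*\cong \Vr^*\otimes\Vr^*$ via the inner product, to the single transposition $(1,d+1)\in \frakS_{2d}$. Once that dictionary is established, both halves of the proof reduce to invariance-under-generators arguments, consistent with the paper's assertion that the proposition is straightforward.
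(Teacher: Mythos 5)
Your proof is correct, and it is precisely the "straightforward" argument the paper has in mind: the sentence immediately preceding the proposition (that $\frakS_d\times\frakS_d$ together with an interleaving transposition generates $\frakS_{2d}$) is exactly the generating-set fact you invoke for uniqueness, and existence is the routine unwinding of the chain~\eqref{eq:injects} that you carry out. The step you flag as the only delicate one — that $\t_1$ on $\End(\Vr^{\otimes d})$, read through the inner-product identification into $(\Vr^*)^{\otimes 2d}$, is the transposition $(1,d+1)$ — is indeed the load-bearing bookkeeping, and you state it correctly. One small remark: in the uniqueness half you do not actually need to assume $M^{\t}=M$ separately, since $\frakS_d\times\frakS_d$-invariance plus $(1,d+1)$-invariance already generate full $\frakS_{2d}$-invariance and hence force the block swap; but the paper's definition of a Gram operator builds in $M\in\Sym(S^d(\Vr))$ anyway, so nothing is lost.
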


%\begin{remark}
%Let $\Vr$ be a real Hilbert space, and let $N^{(d)}=M(\norm{x}^{2d})$ be the maximally symmetric Gram operator of $\norm{x}^{2d}$. Note that $N^{(1)}=\I_{\Vr}$. The natural guess for $N^{(d)} \neq \Pi_d$ in general. This can be verified by noting that $\Pi_d$.
%\end{remark}

\subsubsection*{$M(\norm{x}^{2d})$ is positive definite}
We will also require the following proposition.

\begin{prop}\label{prop:Mspos}
It holds that $M(\norm{x}^{2d})$ is positive definite.
\end{prop}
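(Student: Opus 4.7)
The plan is to compute the bilinear form $\ip{y^{\otimes d}}{M(\norm{x}^{2d})\, z^{\otimes d}}$ explicitly via polarization, express it as a non-negative combination of positive semidefinite kernels on $\Vr\times\Vr$, and isolate a strictly positive piece proportional to the identity on $S^d(\Vr)$. The starting point, immediate from the definition of $M(p)$ in~\eqref{eq:injects} together with Proposition~\ref{prop:sym}, is that $\ip{y^{\otimes d}}{M(p)\, z^{\otimes d}} = \tilde{p}(y,\dots,y,z,\dots,z)$ for any $p \in S^{2d}(\Vr^*)$, where $\tilde{p}$ denotes the fully polarized symmetric $2d$-linear form associated to $p$, normalized so that $\tilde{p}(x,\dots,x)=p(x)$.

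Applying Wick's theorem to $p(x) = \norm{x}^{2d}$ by writing $p(\sum_i t_i y_i) = (\sum_{i,j} t_i t_j \ip{y_i}{y_j})^d$ and extracting the coefficient of $t_1\cdots t_{2d}$ yields
\ba
\tilde{p}(y_1,\dots,y_{2d}) = \frac{d!\, 2^d}{(2d)!}\, \sum_{\pi}\, \prod_{\{i,j\}\in \pi}\, \ip{y_i}{y_j},
\ea
summed over perfect matchings $\pi$ of $[2d]$. Specializing to $(y,\dots,y,z,\dots,z)$ with $d$ copies of each argument and classifying matchings by the number $m$ of cross-pairs between the $y$-block and the $z$-block (so that $d-m$ must be even in order to pair the leftovers within each block) gives
\ba
\ip{y^{\otimes d}}{M(\norm{x}^{2d})\, z^{\otimes d}} = \sum_{\substack{0\leq m\leq d\\ m\equiv d\,(\mathrm{mod}\,2)}} A_m\, \norm{y}^{d-m}\norm{z}^{d-m}\, \ip{y}{z}^m,
\ea
where each $A_m>0$ is a positive combinatorial constant; in particular the $m=d$ coefficient is $A_d = (d!)^2\, 2^d/(2d)! = \delta(d)$, arising from the $d!$ fully-mixed matchings.

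To finish, I would upgrade this scalar identity to an operator inequality. Since rank-one symmetric tensors span $S^d(\Vr)$, any $v \in S^d(\Vr)$ can be written as $v = \sum_k c_k\, x_k^{\otimes d}$ with $c_k \in \real$ and $x_k \in \Vr$. Using $\ip{x_k}{x_\ell}^m = \ip{x_k^{\otimes m}}{x_\ell^{\otimes m}}$ and bilinearity,
\ba
\ip{v}{M(\norm{x}^{2d})\, v} = \sum_m A_m\, \biggnorm{\sum_k c_k\, \norm{x_k}^{d-m}\, x_k^{\otimes m}}^2 \;\geq\; A_d\, \norm{v}^2 \;=\; \delta(d)\,\norm{v}^2,
\ea
where the lower bound retains only the $m=d$ term (for which the inner sum is $v$ itself). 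Hence $M(\norm{x}^{2d}) \succeq \delta(d)\, \I_{S^d(\Vr)} \succ 0$.

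The main obstacle is the matching bookkeeping in the polarization step; once the explicit sum-of-squared-norms structure is in hand, positive definiteness is immediate. One could alternatively exploit the orthogonal invariance of $\norm{x}^{2d}$ and decompose $S^d(\Vr)$ into its spherical-harmonic isotypic components to show $M(\norm{x}^{2d})$ acts as a positive scalar on each, but the polarization route is more direct and yields the explicit lower bound $\delta(d)$.
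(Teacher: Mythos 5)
Your proof is correct, and it takes a genuinely different route from the paper's. The paper's argument is a two-line invocation of Reznick's identity (Proposition 6.6 of \cite{reznick1995uniform}): pairing a degree-$2d$ form against $\norm{x}^{2d}$ computes its average over the unit sphere, so $\ip{\psi}{M(\norm{x}^{2d})\psi} = \int_{S^{n-1}} (\psi^\t)^2\,d\mu > 0$. You instead compute the matrix elements directly via Wick polarization of $\ip{x}{x}^d$, obtaining
\ba
\ip{y^{\otimes d}}{M(\norm{x}^{2d})\, z^{\otimes d}} = \sum_{m\equiv d\,(2)} A_m\, \norm{y}^{d-m}\norm{z}^{d-m}\, \ip{y}{z}^m
\ea
with $A_m\ge 0$, and then realize the diagonal quadratic form as a sum $\sum_m A_m\norm{\sum_k c_k \norm{x_k}^{d-m} x_k^{\otimes m}}^2$ of squared norms in the tensor powers $\Vr^{\otimes m}$. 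This is more elementary and self-contained (no sphere-integration identity needed). It also buys you something the paper's proof does not state: an explicit operator lower bound $M(\norm{x}^{2d}) \succeq \delta(d)\,\I_{S^d(\Vr)}$ with exactly the same constant $\delta(d) = 2^d\binom{2d}{d}^{-1}$ that appears in Theorem~\ref{thm:real_opt} (unsurprisingly, since both arise from the fully-matched pairings in the Wick sum), and one can check this bound is saturated for $n=2$. That explicit bound can even be fed into the $\kappa(N^{(d)})$ estimate in Theorem~\ref{thm:convergence}, for which the paper currently relies on a numerically-conjectured formula. The only step worth flagging is that you implicitly use that $\{x^{\otimes d}: x\in\Vr\}$ spans $S^d(\Vr)$ over $\real$; this is standard but deserves a word. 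Overall: correct, distinct, and arguably yields more than the proposition asks for.
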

\begin{proof}
We need to prove that $\ip{\psi}{M(\norm{x}^{2d}) \psi} >0$ for all nonzero $\psi \in S^d(\Vr)$. This follows from
\ba
\ip{\psi}{M(\norm{x}^{2d}) \psi}& = \bip{(\psi^{\t})^{2}}{\norm{x}^{2d}}\\
&= \int_{\substack{x\in \Vr \\ \norm{x}=1}} (\psi^{\t})^2(x) d\mu(x)\\
%&=\Biggip{\setft{vec}\left(\left(\sum_{r_1+ \dots + r_n = d} \alpha_{(1^{(r_1)},\dots, n^{(r_n)})} x_1^{r_1} \cdots x_n^{r_n}\right)^2\right)}{\setft{vec}\left(\sum_{r_1+ \dots + r_n = d} \binom{d}{r_1,\dots, r_n} x_1^{2r_1} \cdots x_n^{2r_n}\right)}\\
%&=\frac{1}{(2d!)^2}\sum_{r_1+ \dots + r_n = d} \alpha_{(1^{(r_1)},\dots, n^{(i_n)})}^2 f(1^{(r_1)},\dots, n^{(r_n)})^2 \binom{d}{r_1,\dots, r_n} f(1^{(2r_1)},\dots, n^{(2r_n)})\\
&>0,
\ea
where $\mu$ is a normalized Lebesgue measure on the sphere. %normalized so that $\int_{\substack{x\in \Vr \\ \norm{x}=1}} (e_1^{\t}x)^{2d} \; d \mu(x)=1$.
%\ba
%q(x)= \sum_{r_1+ \dots + r_n = d} \alpha_{(1^{(r_1)},\dots, n^{(r_n)})} x_1^{r_1} \cdots x_n^{r_n} \in \real[x]_d
%\ea
%is the symmetric tensor $v$ after the isomorphism $S^d(\real^n) \cong \real[x]_d$, and $1^{(r_1)}=1,\dots, 1$ ($r_1$ times) etc.
The second line is~\cite[Proposition 6.6]{reznick1995uniform}. This completes the proof.
\end{proof}

\subsubsection*{Generalizing the maximally symmetric Gram operator to tensors}

More generally, for $\V_i=\real^{n_i}, i=1\dots,m$,
%\ba
%\U=S^{2d_1}(\V_1^*)\ootimes S^{2d_m}(\V_m^*)
%\ea
%and
%\ba
%\W=S^{d_1}(\V_1)\ootimes S^{d_m}(\V_m^)
%\ea
and
\ba
p\in S^{2d_1}(\V_1^*)\ootimes S^{2d_m}(\V_m^*),
\ea
we let
\ba
M(p)\in \End(S^{d_1}(\V_1)\ootimes S^{d_m}(\V_m))
\ea
be the operator obtained from $p$ by the inclusion
\ba
S^{2d_1}(\V_1^*)\ootimes S^{2d_m}(\V_m^*) \injects \End(S^{d_1}(\V_1)\ootimes S^{d_m}(\V_m)),
\ea
similarly to~\eqref{eq:injects}.

\section{Hermitian optimization}\label{sec:herm}

In this section we review some background material on Hermitian forms, describe the maximally symmetric Gram operator as a Hermitian form, and state the HSOS hierarchy in more details. We then extend all of this to \textit{multi-homogeneous Hermitian optimization}, which we will use in Section~\ref{sec:tensor} to obtain a spectral hierarchy for the tensor spectral norm.

\subsection{Hermitian forms}
Let $\Vc=\complex^n$. For an endomorphism $R \in \End(S^d(\Vc))$ and vectors $z,w \in \Vc$, let $R(z,w^\dg):= \ip{w^{\otimes d}}{R z^{\otimes d}}$. The following proposition is standard, and shows that $R$ is uniquely determined by $R(z,z^\dg)$. See also~\cite{d2011hermitian,d2019hermitian} and~\cite[Proposition 7.1]{DJLV24}.
\begin{prop}\label{prop:polarization}
Let $R \in \End(S^d(\Vc))$. If $R(z,z^\dg)=0$ for all $z \in \Vc$, then $R=0$.
\end{prop}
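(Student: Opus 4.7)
The plan is a standard complex polarization argument, exploiting the fact that $R(z,z^\dg) = \langle z^{\otimes d}, R z^{\otimes d}\rangle$ is an ordinary (sesquilinear) quadratic expression in the vector $z^{\otimes d}\in S^d(\Vc)$, combined with the fact that rank-one symmetric tensors $\{z^{\otimes d} : z \in \Vc\}$ span $S^d(\Vc)$.

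First I would substitute $z \mapsto z + \lambda w$ for arbitrary $w \in \Vc$ and $\lambda \in \complex$, and expand using the binomial formula
\ba
(z+\lambda w)^{\otimes d} = \sum_{k=0}^d \binom{d}{k} \lambda^k \,\Pi_d\bigl(z^{\otimes (d-k)} \otimes w^{\otimes k}\bigr) \; \in \; S^d(\Vc).
\ea
Plugging this into the hypothesis $R(z+\lambda w, (z+\lambda w)^\dg) = 0$, the sesquilinearity of $\langle \cdot, \cdot\rangle$ makes the first factor contribute $\bar\lambda^{k}$ and the second factor $\lambda^{\ell}$, yielding a polynomial in $(\lambda, \bar\lambda)$ of bidegree $(d,d)$ whose coefficients are (up to binomial factors) the inner products $\langle \Pi_d(z^{\otimes (d-k)}\otimes w^{\otimes k}), R\, \Pi_d(z^{\otimes (d-\ell)}\otimes w^{\otimes \ell})\rangle$.

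Next, since this expression vanishes for every $\lambda \in \complex$, and a polynomial in $(\lambda, \bar\lambda)$ that vanishes identically on $\complex$ must have all of its coefficients equal to zero (equivalently, its conversion to a real polynomial in $\Re\lambda, \Im\lambda$ must be the zero polynomial), each coefficient vanishes separately. Extracting the coefficient of $\lambda^d \bar\lambda^0$ gives
\ba
\langle w^{\otimes d}, R\, z^{\otimes d}\rangle = 0 \quad \text{for all } z, w \in \Vc.
\ea
Finally, since $\{z^{\otimes d} : z \in \Vc\}$ spans $S^d(\Vc)$ (a standard fact about symmetric tensors over an infinite field), the sesquilinear form $(u,v) \mapsto \langle u, Rv\rangle$ vanishes on a spanning pair, hence is identically zero, which forces $R = 0$.

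The argument is essentially bookkeeping; the only conceptual point is the polarization step, which requires noting that $\lambda$ and $\bar\lambda$ behave as independent indeterminates for the purposes of identifying coefficients of a polynomial that vanishes on $\complex$. No step is a genuine obstacle.
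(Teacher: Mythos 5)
Your argument is correct, and it is the canonical polarization proof. A small bookkeeping remark: with the inner product sesquilinear in the first argument, the coefficient of $\lambda^d\bar\lambda^0$ in $\langle (z+\lambda w)^{\otimes d}, R(z+\lambda w)^{\otimes d}\rangle$ is actually $\langle z^{\otimes d}, R\,w^{\otimes d}\rangle$ (first slot from the $\lambda^0$ term, second slot from the $\lambda^d$ term); this is immaterial since both $z$ and $w$ range over all of $\Vc$. The paper does not spell out a proof of this proposition — it labels it standard and cites \cite{d2011hermitian,d2019hermitian} and \cite[Proposition~7.1]{DJLV24} — but the polarization argument you give (expand in $(\lambda,\bar\lambda)$, note that $\lambda^k\bar\lambda^\ell$ are linearly independent as functions on $\complex$, extract the top pure-$\lambda$ coefficient, and use that rank-one symmetric tensors span $S^d(\Vc)$) is exactly what those references do, so this is the same approach rather than a genuinely different one.
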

A \textit{Hermitian form} of bidegree $(d,d)$ is an element $H \in \End(S^d(\Vc))$ for which 
\ba
H(z,w^\dg)=\over{H(w,z^\dg)}
\ea
for all $z,w\in \Vc$. Let $\Herm(S^d(\Vc))\subseteq \End(S^d(\Vc))$ be the set of Hermitian forms of bidegree $(d,d)$. An immediate consequence of Proposition~\ref{prop:polarization} is that $H(z,z^\dg)=\sum_{|\bfalpha|=|\bfbeta|=d} C_{\bfalpha,\bfbeta} z^{\bfalpha} z^{\dg \bfbeta}$ is Hermitian if and only if it is Hermitian as an endomorphism, i.e. the matrix $(C_{\bfalpha,\bfbeta})_{(\bfalpha,\bfbeta)}$ is Hermitian (see also~\cite[Proposition 1.1]{d2011hermitian}). Let $\Pos(S^{d}(\Vc))$ be the set of \textit{Hermitian sums-of-squares}: Hermitian forms of the form $P(z,z^\dg)=\sum_{i} |q_i(z)|^2$ for some $q_i \in S^{d}(\Vc^*)$. These are precisely the forms that are positive semidefinite as endomorphisms, i.e. the matrix $(C_{\bfalpha,\bfbeta})_{(\bfalpha,\bfbeta)}$ is positive semidefinite.

Let $H \in \Herm(S^d(\Vc))$ be a Hermitian form, and let
\ba
H_k(z,z^\dg)=H(z,z^\dg) \norm{z}^{2(k-d)}
\ea
when $k \geq d$. This is a Hermitian form with operator $H_k=\Pi_k(H\otimes \I_{\Vc}^{\otimes k-d})\Pi_k$, where $\Pi_k$ is the orthogonal projection onto $S^k(\Vc)$. Indeed, $\ip{z^{\otimes k}}{H_k z^{\otimes k}}=H(z,z^\dg)\norm{z}^{2(k-d)}$ under this choice.

%\subsection{Hermitian forms associated to a real form}
%
%In this section we describe the maximally symmetric Gram operator as a Hermitian form. We first elucidate the notion of ``viewing a real operator as a complex operator."

\subsubsection*{Hermitian forms associated to a real form}
Let $\Vr=\real^n$ and $\Vc=\complex^n$ (the complexification of $\Vr$), let $p \in S^{2d}(\Vr^*)$ be a real form, and let $P=M(p)$ be the maximally symmetric Gram operator of $p$. Regarding $P$ as a Hermitian form $P \in \Herm(S^k(\Vc))$, let $P_k=M(p)_k \in \Herm(S^k(\Vc))$ be defined by $P_k(z,z^\dg)=P(z,z^\dg) \norm{z}^{2(k-d)}$, as above. Then
\ba\label{eq:pk}
P_k=\Pi_{k}(P\otimes \I_{\Vc}^{\otimes k-d})\Pi_{k} \in \Herm(S^k(\Vc)),
\ea
which can be verified by noting that $\ip{z^{\otimes k}}{P_k z^{\otimes k}}=P(z,z^\dg)\norm{z}^{2(k-d)}$ under this choice. Since $P_k$ is real-valued, we can view it as a real operator
\ba
P_k=\Pi_{k}(P\otimes \I_{\Vr}^{\otimes k-d})\Pi_{k} \in \Sym(S^k(\Vr)).
\ea
Note that $P_k$ is a \textit{Gram operator} for $p(x) \norm{x}^{2(k-d)} \in S^k(\Vr^*)$, since
\ba
\ip{x^{\otimes k}}{P_k x^{\otimes k}}=p(x) \norm{x}^{2(k-d)}
\ea
for all $x \in \Vr$. In contrast to $P=M(p)$, $P_k$ is not in general maximally symmetric. In Appendix~\ref{ap:mk} we give a formula for $P_k$ in coordinates, which we found useful for our numerical implementation.

\begin{remark}
Let $\Vr=\real^n$ and $\Vc=\complex^n$ (the complexification of $\Vr$), and let $N^{(d)}:=M(\norm{x}^{2d})$ be the maximally symmetric Gram operator of $\norm{x}^{2d}$. Note that $N^{(1)}(z,z^\dg)=\norm{z}^{2}$, but $N^{(d)}(z,z^\dg)\neq \norm{z}^{2d}$ in general. Indeed, as a Hermitian operator $\norm{z}^{2d}=\Pi_{d}$ is the projection onto the symmetric space, which is not maximally symmetric for $d >1$.
\end{remark}

\subsection{HSOS hierarchy}

In this section, we present the HSOS hierarchy in more details. Recall that for a Hermitian operator $H \in \Herm(S^d(\Vc))$ we define
\ba
H_{\min}=\min_{\substack{z \in \Vc\\ \norm{z}=1}} H(z,z^\dg).
\ea

\begin{theorem}[HSOS hierarchy]\label{thm:hsos}
Let $H \in \Herm(S^d(\Vc))$ be a Hermitian form, and let $H_k(z,z^\dg)=H(z,z^\dg) \norm{z}^{2(k-d)}$ when $k \geq d$. For each $k\geq d$, let $\mu_k=\lambda_{\min}(H_k)$. Then $\mu_d \leq \mu_{d+1} \leq \dots $ and $\lim_k \mu_k = H_{\min}$. Furthermore,
\ba
H_{\min} - \mu_k \leq \norm{H}_{\infty} \frac{4 d(n-1)}{k+1} = O(1/k).
\ea
\end{theorem}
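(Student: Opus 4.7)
The plan is to verify the easy statements $\mu_k \leq H_{\min}$ and monotonicity directly, then invoke a quantitative quantum de Finetti theorem to establish the $O(1/k)$ convergence rate.

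The upper bound $\mu_k \leq H_{\min}$ follows by evaluating $H_k$ on the unit vector $z^{\otimes k}\in S^k(\Vc)$ for an arbitrary unit $z\in \Vc$: this gives $\ip{z^{\otimes k}}{H_k z^{\otimes k}} = H(z,z^\dg)\norm{z}^{2(k-d)}=H(z,z^\dg)$, and minimizing over $z$ yields $\mu_k \leq H_{\min}$. For monotonicity, let $\psi\in S^{k+1}(\Vc)$ be a unit eigenvector of $H_{k+1}$ attaining $\mu_{k+1}$ and set $\sigma_k = \tr_{k+1}(\ketbra{\psi}{\psi})$. Since $\psi$ is symmetric, $\sigma_k \in \Density(S^k(\Vc))$. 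Using $H_{k+1}=\Pi_{k+1}(H\otimes \I_{\Vc}^{\otimes k+1-d})\Pi_{k+1}$ together with $\Pi_{k+1}\psi = \psi$,
\ba
\mu_{k+1} = \ip{\psi}{(H\otimes \I_{\Vc}^{\otimes k+1-d})\psi} = \tr\bigl((H\otimes \I_{\Vc}^{\otimes k-d})\sigma_k\bigr) = \tr(H_k \sigma_k)\geq \mu_k.
\ea

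For the convergence rate, take a unit eigenvector $\psi\in S^k(\Vc)$ of $H_k$ with eigenvalue $\mu_k$, set $\rho=\ketbra{\psi}{\psi}\in \Density(S^k(\Vc))$, and form the $d$-body reduced state $\sigma=\tr_{[d+1\cdot\cdot k]}(\rho) \in \Density(S^d(\Vc))$. The same reasoning as in the monotonicity step shows $\mu_k = \tr(H\sigma)$. The quantitative de Finetti theorem of Christandl, K\"onig, Mitchison, and Renner, applied to the symmetric state $\rho$, produces a probability measure $\nu$ on unit vectors of $\Vc$ such that $\norm{\sigma - \tau}_1 \leq \frac{4d(n-1)}{k+1}$, where $\tau := \int \ketbra{z}{z}^{\otimes d}\, d\nu(z)$. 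Since $\tr(H\tau)=\int H(z,z^\dg)\,d\nu(z)\geq H_{\min}$ and $|\tr(H(\tau-\sigma))|\leq \norm{H}_\infty \norm{\tau-\sigma}_1$ by H\"older's inequality,
\ba
H_{\min} - \mu_k \leq \tr(H\tau) - \tr(H\sigma) \leq \norm{H}_\infty \cdot \frac{4d(n-1)}{k+1}.
\ea
Taking $k\to\infty$ then gives $\lim_k \mu_k = H_{\min}$.

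The main obstacle is identifying the correct quantitative de Finetti bound with the stated constant $\frac{4d(n-1)}{k+1}$; the standard Christandl-K\"onig-Mitchison-Renner statement for a pure symmetric state on $\Vc^{\otimes k}$ restricted to $d$ factors gives a bound of this order, and with the right normalization yields the exact constant. Weaker variants (e.g.\ derived via exchangeability and symmetric purification) suffice for the limit statement but produce a looser prefactor. Everything else (the Hermitian-form identities, the partial trace reduction to $\tr(H\sigma)$, and the Hölder estimate) is routine.
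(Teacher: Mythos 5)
Your approach is correct and is essentially the same as the paper's: invoke the quantitative quantum de Finetti theorem of Christandl, König, Mitchison, and Renner (the form stated in Watrous's textbook, Theorem 7.26), apply it to the minimal eigenvector of $H_k$, identify $\mu_k = \tr(H\sigma)$ via partial trace, and close with Hölder's inequality. The paper's proof is word-for-word the same chain of inequalities for the $O(1/k)$ bound; you additionally spell out the easy facts $\mu_k\leq H_{\min}$ and monotonicity (which the paper asserts without proof), and those steps are correct too.
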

Similar results are proven in~\cite{quillen1968representation,catlin1995stabilization,to2006effective}. We prove this theorem using the quantum de Finetti theorem~\cite{christandl2007one}. We use the form of this theorem stated and proven in~\cite[Theorem 7.26]{Wat18}.

\begin{theorem}[Quantum de Finetti theorem]\label{thm:definetti}
Let $k \geq d$ be integers. For any unit vector $\psi \in S^k(\Vc)$, there exists an operator
\ba\label{eq:tau}
\tau \in \setft{conv}\{(uu^\dg)^{\otimes d} : u \in \Vc , \norm{u}=1\}\subseteq \End(S^d(\Vc))
\ea
for which
\ba
\norm{\tr_{[d+1 \cdot \cdot k]}(\psi \psi^\dg)-\tau}_1 \leq \frac{4 d (n-1)}{k+1}.
\ea
\end{theorem}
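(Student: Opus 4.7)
The approach I would take is the measure-and-prepare (coherent-state) construction underlying the Christandl--K{\"o}nig--Mitchison--Renner bound~\cite{christandl2007one}, with trace-norm duality as the main structural tool. The starting point is the classical Schur--Weyl integral identity $\int_{\norm{u}=1} (uu^\dg)^{\otimes m}\, d\sigma(u) = c_{n,m}^{-1}\Pi_m$, valid for every integer $m \geq 0$, where $\sigma$ is the Haar measure on the unit sphere of $\Vc$ normalized to have total mass $1$ and $c_{n,m} = \binom{n+m-1}{m} = \dim S^m(\Vc)$. Using this identity at $m=k$, the family $\{c_{n,k}(uu^\dg)^{\otimes k}\,d\sigma(u)\}$ is a continuous POVM on $S^k(\Vc)$, which motivates the candidate
\ba
\tau := c_{n,k}\int_{\norm{u}=1} |\ip{u^{\otimes k}}{\psi}|^2\, (uu^\dg)^{\otimes d}\, d\sigma(u).
\ea
The density $p(u) := c_{n,k}|\ip{u^{\otimes k}}{\psi}|^2$ integrates to $1$ (by the same identity applied to $\psi\psi^\dg$), so $\tau$ lies in $\conv\{(uu^\dg)^{\otimes d} : u \in \Vc,\, \norm{u}=1\}$ via a Riemann-sum approximation together with the closedness of the convex hull.

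Next, I would reduce the claim to an operator estimate via trace-norm duality: $\norm{\rho - \tau}_1 = \sup_{\norm{X}_{\infty}\leq 1}|\tr(X(\rho-\tau))|$ with $\rho := \tr_{[d+1\cdot\cdot k]}(\psi\psi^\dg)$. A direct computation gives $\tr(X\rho) - \tr(X\tau) = \ip{\psi}{((X \otimes \I_{\Vc}^{\otimes k-d}) - T_X)\psi}$, where $T_X := c_{n,k}\int \tr(X(uu^\dg)^{\otimes d})(uu^\dg)^{\otimes k}\, d\sigma(u)$. Absorbing the scalar $\tr(X(uu^\dg)^{\otimes d})$ into a partial trace via the identity $\tr(X(uu^\dg)^{\otimes d})(uu^\dg)^{\otimes k} = \tr_{[1\cdot\cdot d]}((X \otimes \I_{\Vc}^{\otimes k})(uu^\dg)^{\otimes k+d})$, and applying the Schur--Weyl identity at $m = k+d$, one obtains the closed form
\ba
T_X = \frac{c_{n,k}}{c_{n,k+d}}\, \tr_{[1 \cdot\cdot d]}\bigl((X \otimes \I_{\Vc}^{\otimes k})\Pi_{k+d}\bigr).
\ea

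The main technical step is then bounding the operator norm of $(X \otimes \I_{\Vc}^{\otimes k-d}) - T_X$ on $S^k(\Vc)$. Expanding $\Pi_{k+d}$ as the average over the $\binom{k+d}{d}$ ways of choosing a size-$d$ subset of $\{1,\dots,k+d\}$ (corresponding to where the $d$ ``$X$-indices'' effectively sit among the $k+d$ symmetric factors), the aligned choice $\{1,\dots,d\}$ reproduces $(X \otimes \I_{\Vc}^{\otimes k-d})$ up to the discrepancy between the binomial ratio $c_{n,k}/c_{n,k+d}$ and $\binom{k+d}{d}^{-1}$; the remaining $\binom{k+d}{d}-1$ misaligned placements are controlled using $\norm{X}_\infty \leq 1$, with the factor $n-1$ entering through a norm comparison between a free tensor factor and its trace against a symmetric companion. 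The main obstacle is precisely this combinatorial/operator estimate: while the reductions above are relatively mechanical, obtaining the sharp constant $4d(n-1)/(k+1)$---as opposed to a looser rate like $O(d^2/k)$ from a naive symmetrization argument---requires tight tracking of binomial ratios and dimension factors, and is the step where the specific choice of overcomplete ``coherent-state'' POVM (rather than, say, a finite $2$-design) becomes essential.
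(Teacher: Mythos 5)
Your opening is the standard Christandl--K\"onig--Mitchison--Renner construction and, as far as it goes, it is sound: the measure-and-prepare state $\tau = c_{n,k}\int |\ip{u^{\otimes k}}{\psi}|^2 (uu^\dg)^{\otimes d}\,d\sigma(u)$ does lie in $\conv\{(uu^\dg)^{\otimes d}\}$, the duality reduction $\norm{\rho-\tau}_1=\sup_{\norm{X}_\infty\le 1}|\tr(X(\rho-\tau))|$ is legitimate, and your closed form $T_X=\tfrac{c_{n,k}}{c_{n,k+d}}\tr_{[1\cdot\cdot d]}\bigl((X\otimes \I_{\Vc}^{\otimes k})\Pi_{k+d}\bigr)$ is correct. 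But the theorem's entire content is the quantitative bound, and that is exactly the step you defer as ``the main obstacle.'' The sketched finish --- expanding $\Pi_{k+d}$ over the $\binom{k+d}{d}$ placements of the $d$ distinguished factors and bounding the misaligned placements --- is not carried out and is not obviously workable: a term-by-term estimate over $\binom{k+d}{d}-1$ misaligned terms has no visible mechanism for collapsing to $O\bigl(d(n-1)/(k+1)\bigr)$, and you never say how the factor $n-1$ or the $1/(k+1)$ decay would actually emerge, nor why the bound holds uniformly over all $\norm{X}_\infty\le 1$ restricted to $S^k(\Vc)$. The standard way to close the argument is different in character: one proves an overlap/operator-domination inequality for $\tau$ against $\rho$ controlled by the dimension ratio $\binom{n+k-1}{k}\big/\binom{n+k+d-1}{k+d}$ (the quantity $\delta(\X)$ of~\cite{koenig2009most}), invokes the elementary estimate that this ratio is at least $1-d(n-1)/(k+1)$ (cf.~\cite[Eq.~(7.196)]{Wat18}), and then converts to trace distance, which is where the constant $4$ comes from. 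Without that inequality (or a worked-out substitute), your proposal proves only the easy structural facts, so there is a genuine gap.

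For comparison: the paper does not prove this statement at all. It imports it verbatim from~\cite[Theorem 7.26]{Wat18} (following~\cite{christandl2007one}), and the only de Finetti-type statement it actually derives is the multipartite generalization in Appendix~B, which it obtains from~\cite[Theorem III.3]{koenig2009most} by computing precisely the dimension ratio $\dim(\B)/\dim(\C)\ge 1-\frac{d(\max_j n_j-1)}{k+1}$. So a from-scratch proof was not required here; if you do want one, the piece you must supply is the dimension-ratio-to-trace-distance estimate described above, not further manipulation of the coherent-state POVM.
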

In the proposition statement, $\setft{conv}$ denotes the convex hull, and $\tr_{[d+1 \cdot \cdot k]}(\psi \psi^\dg)\in \Hom(S^d(\Vc))$ is the partial trace of $\psi \psi^\dg$ over $k-d$ subsystems (see Section~\ref{background}). Now we can prove Theorem~\ref{thm:hsos}.

\begin{proof}[Proof of Theorem~\ref{thm:hsos}]
Let $\psi_k \in S^k(\Vc)$ be a unit eigenvector for $H_k$ with minimum eigenvalue, and let $\tau_k \in \End(S^d(\Vc))$ be an operator of the form~\eqref{eq:tau} for which
\ba
\norm{\tr_{[d+1 \cdot \cdot k]}(\psi_k \psi_k^\dg)-\tau_k}_1 \leq \frac{4 d (n-1)}{k+1}.
\ea
Then
\ba
H_{\min}- \mu_k &\leq \tr(H \tau_k) - \mu_k\\
&= \tr(H(\tau_k - \tr_{[d+1 \cdot \cdot k]}(\psi_k \psi_k^\dg)))\\
 &\leq \norm{H}_{\infty} \norm{\tr_{[d+1 \cdot \cdot k]}(\psi_k \psi_k^\dg)-\tau_k}_1\\
&\leq \norm{H}_{\infty} \frac{4 d (n-1)}{k+1},
\ea
where the first line follows from the form of $\tau$ and convexity, the second line is straightforward, the third line follows from the inequality $\tr(AB)\leq \norm{A}_{\infty} \norm{B}_1$, and the fourth line follows from the quantum de Finetti theorem. This completes the proof.
\end{proof}

\subsection{m-HSOS hierarchy}

Let $\V_i=\complex^{n_i}$ for $i=1,\dots, m$, let $\U=S^{d_1}(\V_1)\ootimes S^{d_m}(\V_m)$, and let $H \in \Herm(\U)$ be a Hermitian operator. For $z_i, w_i \in \V_i$ and $\bfz:=(z_1,\dots, z_m)$, $\bfw=(w_1,\dots, w_m)$ we define
\ba
H(\bfz,\bfw^\dg):= \ip{w_1^{\otimes d_1}\ootimes w_m^{\otimes d_m}}{H \;\; z_1^{\otimes d_1}\ootimes z_m^{\otimes d_m}}.
\ea
By similar arguments as in Proposition~\ref{prop:polarization}, $H$ is uniquely determined by $H(\bfz,\bfz^\dg)$. We call $H$ a \textit{multi-homogeneous Hermitian form} of degree $\bfd=(d_1,\dots, d_m)$. Let

\ba
H_{\min}=\min_{\substack{z_i \in \V_i \\ \norm{z_i}=1}} H(\bfz,\bfz^\dg).
\ea
For $k \geq \max_j d_j$, let
\ba
H_k(\bfz,\bfz^\dg)=H(\bfz,\bfz^\dg) \cdot (\norm{z_1}^{2(k-d_1)}\ootimes \norm{z_n}^{2(k-d_m)})\in \Herm(S^k(\V_1)\ootimes S^k(\V_m)).
\ea
As an operator,
\ba
H_k=(\Pi_{d_1} \ootimes \Pi_{d_m})(H \otimes \I_{\V_1}^{\otimes k-d_1} \ootimes \I_{\V_m}^{\otimes k-d_m})(\Pi_{d_1} \ootimes \Pi_{d_m}),
\ea
where $\Pi_{d_i}$ is the orthogonal projection onto $S^{d_i}(\V_i)$. This can be verified by checking that
\ba
\ip{z_1^{\otimes d_1}\ootimes z_m^{\otimes d_m}}{H_k \;\; z_1^{\otimes d_1}\ootimes z_m^{\otimes d_m}}=H_k(\bfz,\bfz^\dg)
\ea
under this choice.

The following theorem shows that the minimum eigenvalue of $H_k$ converges to $H_{\min}$.

\begin{theorem}[m-HSOS hierarchy]\label{thm:mhsos}
Let $H \in \Herm(\U)$, and let $\mu_k=\lambda_{\min}(H_k)$
when $k \geq d:= \max_i d_i$. Then $\mu_d \leq \mu_{d+1} \leq \dots $ and $\lim_k \mu_k = H_{\min}$. Furthermore,
\ba
H_{\min} - \mu_k \leq \norm{H}_{\infty} \frac{4 |\bfd| (\max_j n_j-1)}{k+1} = O(1/k),
\ea
where $|\bfd|=d_1+\dots + d_m$.
\end{theorem}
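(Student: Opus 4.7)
The plan is to mirror the proof of Theorem~\ref{thm:hsos} from the previous subsection, with the only new ingredient being a multipartite iteration of the quantum de Finetti theorem (Theorem~\ref{thm:definetti}). Monotonicity and the inequality $\mu_k \leq H_{\min}$ are straightforward. For monotonicity, given a unit eigenvector $\phi \in S^{k+1}(\V_1) \ootimes S^{k+1}(\V_m)$ of $H_{k+1}$ with eigenvalue $\mu_{k+1}$, I would partial trace one copy out of each factor to obtain $\tau := \tr_{k+1}^{(1)}\cdots \tr_{k+1}^{(m)}(\phi \phi^\dg)$, which is a density operator supported in $S^k(\V_1) \ootimes S^k(\V_m)$ (partial tracing preserves symmetry of the remaining factors). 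Using the operator form $H_k = (\Pi_{d_1} \ootimes \Pi_{d_m})(H \otimes \I^{\otimes (k-d_1)}_{\V_1}\ootimes \I^{\otimes (k-d_m)}_{\V_m})(\Pi_{d_1}\ootimes \Pi_{d_m})$ and the symmetry of $\phi$, one verifies $\mu_{k+1} = \ip{\phi}{H_{k+1}\phi} = \tr(H_k \tau) \geq \mu_k$. For $\mu_k \leq H_{\min}$, evaluate $\ip{\bfz^{\otimes \bfk}}{H_k \bfz^{\otimes \bfk}} = H(\bfz, \bfz^\dg)$ on product symmetric states $z_1^{\otimes k}\ootimes z_m^{\otimes k}$ and minimize over unit $z_i$.

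For the convergence rate, let $\psi_k$ be a unit minimum eigenvector of $H_k$. Let $\rho_k := \tr_{[d_1+1 \cdot \cdot k]}^{(1)}\cdots \tr_{[d_m+1 \cdot \cdot k]}^{(m)}(\psi_k \psi_k^\dg) \in \End(S^{d_1}(\V_1) \ootimes S^{d_m}(\V_m))$, so that $\tr(H \rho_k) = \mu_k$. I would then approximate $\rho_k$ in trace distance by a convex combination
\begin{equation*}
\tau_k = \sum_\alpha p_\alpha\,(u_1^\alpha u_1^{\alpha\dg})^{\otimes d_1} \ootimes (u_m^\alpha u_m^{\alpha\dg})^{\otimes d_m}
\end{equation*}
with each $u_i^\alpha \in \V_i$ a unit vector. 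For any such $\tau_k$, $\tr(H \tau_k) \geq H_{\min}$ by convexity, and so
\begin{equation*}
H_{\min} - \mu_k \;\leq\; \tr(H\tau_k) - \tr(H\rho_k) \;\leq\; \norm{H}_\infty\,\norm{\tau_k - \rho_k}_1,
\end{equation*}
using $\tr(AB) \leq \norm{A}_\infty \norm{B}_1$. The desired bound $\norm{\tau_k - \rho_k}_1 \leq \frac{4|\bfd|(\max_j n_j - 1)}{k+1}$ would follow by iterating Theorem~\ref{thm:definetti} over the $m$ factors.

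The iteration uses a ``de Finetti with side information'' version of Theorem~\ref{thm:definetti}: if $\rho$ is a state on $S^k(\V) \otimes \E$ for any auxiliary system $\E$, then $\tr_{[d+1 \cdot \cdot k]}(\rho)$ is within trace distance $\frac{4d(n-1)}{k+1}$ of some convex combination $\sum_\alpha p_\alpha (u^\alpha u^{\alpha\dg})^{\otimes d} \otimes \sigma^\alpha_\E$; this is proven by applying the informationally-complete measurement strategy of~\cite{christandl2007one} only on the symmetric part and is the form stated in that reference. Applied first to factor $1$ (with $\E$ the remaining $m-1$ symmetric factors), then recursively on the environment states of each resulting $(u_1^\alpha u_1^{\alpha\dg})^{\otimes d_1}\otimes \sigma^\alpha_\E$, and using convexity and the triangle inequality, the total trace-distance error is $\sum_{i=1}^m \frac{4 d_i(n_i-1)}{k+1} \leq \frac{4|\bfd|(\max_j n_j - 1)}{k+1}$, as needed.

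The main obstacle is confirming the ``with side information'' form of Theorem~\ref{thm:definetti} with precisely the bound $\frac{4d(n-1)}{k+1}$: the proof in~\cite{Wat18} that the authors cite is stated for a bare symmetric state, and I would either cite the CKMR formulation directly or check that the proof carries through verbatim when an arbitrary ancilla is appended (which it does, since the measurement-based argument treats the ancilla as a passive subsystem). Everything else is a routine triangle-inequality bookkeeping.
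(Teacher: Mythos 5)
Your proposal is correct but takes a genuinely different route from the paper's. The paper proves a single \emph{multipartite} quantum de~Finetti theorem (Theorem~\ref{thm:definettigen}) directly as a corollary of the representation-theoretic result of K\"onig--Mitchison \cite{koenig2009most}: they instantiate the abstract framework with the Segre--Veronese type action of $\Unitary(\V_1)\times\cdots\times\Unitary(\V_m)$ on $S^k(\V_1)\otimes\cdots\otimes S^k(\V_m)$, compute the quantity $\delta(\X)=\dim(\B)/\dim(\C)=\prod_j \binom{n_j+k-d_j-1}{k-d_j}\big/\binom{n_j+k-1}{k}$, bound it below, and plug it into the K\"onig--Mitchison estimate. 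You instead propose to iterate the single-party de~Finetti theorem ``with side information,'' peeling off one factor at a time and paying an additive error $\frac{4d_j(n_j-1)}{k+1}$ at step $j$, with the triangle inequality and convexity giving the total $\sum_j\frac{4d_j(n_j-1)}{k+1}\le\frac{4|\bfd|(\max_j n_j-1)}{k+1}$. Both approaches reach the same bound. Your route has the advantage of resting only on the well-known single-party theorem (and is therefore more self-contained for a reader who does not already know the general K\"onig--Mitchison formalism); its downside is precisely the obstacle you flag, namely that one must verify the ancilla-augmented form of the de~Finetti bound with the same constants. That form is indeed standard --- it is stated in CKMR \cite{christandl2007one}, and as you observe the measurement-based proof in \cite{Wat18} carries over unchanged because the informationally-complete POVM acts only on the symmetric factor and the ancilla is a spectator. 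One small bookkeeping point you should make explicit when you iterate: after the $j$-th application the density operators $\sigma^\alpha$ that appear as ancilla states are supported on $S^k(\V_{j+1})\otimes\cdots\otimes S^k(\V_m)$ (since that is the Hilbert space of the ancilla), so the hypothesis of the next application is satisfied, and the monotonicity of trace distance under partial trace is used to transfer the step-$1$ error through the later partial traces.

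Your monotonicity and $\mu_k\le H_{\min}$ arguments match what the paper sketches (and what is spelled out in the analogous proof of Theorem~\ref{thm:convergencegen}); no issues there. As a minor aside unrelated to your proof, the paper's own derivation has a small typo in the displayed chain of inequalities for $\dim(\B)/\dim(\C)$, where $d$ appears where $|\bfd|$ is intended, but the final bound stated in Theorem~\ref{thm:definettigen} is the correct one and matches what your iteration produces.
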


We defer the proof to Appendix~\ref{app:mhsos_proof}, as it is similar to Theorem~\ref{thm:hsos}.

\section{Proof of convergence}\label{sec:converge}

In this section we prove our reduction from real to Hermitian optimization on the sphere (Theorem~\ref{thm:intro_reduction} from the introduction). We then use this result along with the HSOS hierarchy (Theorem~\ref{thm:hsos}) to obtain our hierarchy of eigencomputations for the optimization problem~\eqref{eq:optimization_problem} (Theorem~\ref{thm:intro_hrsos} from the introduction).

\subsection{Reduction from real to Hermitian optimization on the sphere}
In this section we prove Theorem~\ref{thm:intro_reduction}. We first restate the theorem.
\begin{theorem}\label{thm:real_opt}
Let $\Vr=\real^n$, let $\Vc=\complex^n$ (the complexification of $\Vr$), let $p \in S^{2d}(\Vr^*)$, and let $P=M(p) \in \Herm(S^d(\Vc))$ be the maximally symmetric Gram operator of $p$. Then
\ba
P_{\min} \leq p_{\min} \leq \frac{P_{\min}}{\delta(d)},
\ea
where $\delta(d)={2^d}{\binom{2d}{d}^{-1}}$.
\end{theorem}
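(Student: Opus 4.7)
The first inequality $\Pmin\leq \pmin$ is immediate: the inclusion $\Vr\hookrightarrow \Vc$ embeds every real unit vector $x$ as a complex unit vector, and the defining property $\ip{x^{\otimes d}}{P x^{\otimes d}}=(p,x^{\otimes 2d})=p(x)$ of the maximally symmetric Gram operator gives $P(x,x^\dg)=p(x)$. Hence minimizing $P(z,z^\dg)$ over the strictly larger set of complex unit vectors can only decrease the value.

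For the upper bound, the plan is an averaging/polarization argument. Given $z=x+iy\in\Vc$ with $\norm{z}=1$ and $x,y\in\Vr$, consider the random real vector $v(\theta):=\cos\theta\,x+\sin\theta\,y$ with $\theta$ uniform on $[0,2\pi)$. The heart of the proof is the identity
\ba\label{eq:plan-polar}
\bE_\theta\bigl[p(v(\theta))\bigr] = \frac{\binom{2d}{d}}{4^d}\,P(z,z^\dg).
\ea
I would prove it by expanding both sides in the polarizations $Q_r(x,y):=(p,\,x^{\otimes 2d-r}\otimes y^{\otimes r})$, which depend only on the total multiplicity $r$ by full symmetry of $p$. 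The left-hand side expands as $\sum_{r=0}^{2d}\binom{2d}{r}\,\bE_\theta[\cos^{2d-r}\!\theta\sin^r\!\theta]\,Q_r(x,y)$; only even $r=2s$ survive, and a Beta-function calculation shows the $r=2s$ coefficient equals $\binom{2d}{d}\binom{d}{s}/4^d$. For the right-hand side, full symmetry of $p$ gives $P(z,z^\dg)=\ip{z^{\otimes d}}{Pz^{\otimes d}}=(p,\bar z^{\otimes d}\otimes z^{\otimes d})$; substituting $\bar z=x-iy$, $z=x+iy$ and grouping by the total number of $y$'s reduces the coefficient of $Q_r$ to $i^r\sum_{j+k=r}(-1)^j\binom{d}{j}\binom{d}{k}=i^r\cdot[t^r](1-t^2)^d$, which vanishes for odd $r$ and equals $\binom{d}{s}$ for $r=2s$ (two factors of $(-1)^s$ cancel). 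Thus $P(z,z^\dg)=\sum_s\binom{d}{s}Q_{2s}(x,y)$, and comparing coefficients against the left-hand side yields~\eqref{eq:plan-polar}.

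From here the argument concludes quickly. By $2d$-homogeneity of $p$, $p(v)\geq \pmin\norm{v}^{2d}$ for every $v\in\Vr$. Taking $\theta$-expectations and applying Jensen's inequality to the convex map $t\mapsto t^d$ on $t\geq 0$,
\ba
\bE_\theta\bigl[\norm{v(\theta)}^{2d}\bigr] \geq \bigl(\bE_\theta[\norm{v(\theta)}^2]\bigr)^d = \bigl(\tfrac12(\norm{x}^2+\norm{y}^2)\bigr)^d = 2^{-d}.
\ea
Combining with~\eqref{eq:plan-polar} gives
\ba
\tfrac{\binom{2d}{d}}{4^d}\,P(z,z^\dg)=\bE_\theta[p(v(\theta))]\geq \pmin\,\bE_\theta[\norm{v(\theta)}^{2d}]\geq \pmin\cdot 2^{-d},
\ea
which rearranges to $P(z,z^\dg)\geq \delta(d)\,\pmin$; taking the infimum over unit $z$ delivers $\Pmin\geq \delta(d)\,\pmin$, equivalently $\pmin\leq \Pmin/\delta(d)$.

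The main obstacle is the polarization identity~\eqref{eq:plan-polar}: two rather different binomial collapses (trigonometric moments on one side, the $(1-t^2)^d$ generating-function sum on the other) must both land on the same $\binom{d}{s}$ coefficient, and keeping the normalizations straight is the one place where a factor-of-$2$ slip would derail the whole argument. The remaining steps (homogeneity bound and Jensen's inequality) are routine and deliver the bound cleanly in the regime where the upper bound is informative---namely $\pmin\geq 0$, which by the conclusion itself is equivalent to $\Pmin\geq 0$, matching the ``$\pmin>0\iff \Pmin>0$'' remark immediately following the theorem.
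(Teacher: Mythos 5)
Your proof is correct and takes a genuinely different route from the paper's. The paper realifies the trace $\tr((zz^\dg)^{\otimes d}P)$ via the partial-transposition invariance of $M(p)$, rewrites it as $\tr(M(((x^{\t})^2+(y^{\t})^2)^d)\, P)$, invokes Reznick's result on the length of nonnegative binary forms to express the scaled catalecticant as a convex combination of $d+1$ rank-one powers $(v_i v_i^{\t})^{\otimes d}$, and finally bounds the normalization $\delta=\tr(Q)$ by a one-parameter minimization handled with a technical binomial identity (Lemma~\ref{lemma:technical}, proved via generating functions). You instead establish the averaging identity $\bE_\theta\bigl[p(\cos\theta\,x+\sin\theta\,y)\bigr]=\binom{2d}{d}4^{-d}\,P(z,z^\dg)$ directly by polarization and trigonometric moments, and pass to the bound via $p(v)\geq p_{\min}\norm{v}^{2d}$ and Jensen's inequality. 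I verified both collapses in your identity (the Wallis moments give $\binom{2d}{d}\binom{d}{s}/4^d$ for the even coefficient, and the alternating $\binom{d}{j}\binom{d}{k}$ sum gives $i^{2s}(-1)^s\binom{d}{s}=\binom{d}{s}$), and the extremal configuration $\norm{v(\theta)}$ constant matches where the paper's $\delta(t)$ attains its minimum at $t=1/2$, so the constants agree. Your route is notably more self-contained: it uses neither Reznick's decomposition nor the generating-function lemma, and the Jensen step silently absorbs the eigenvalue case analysis that the paper must carry out explicitly. What the paper's decomposition retains in exchange is an explicit finite witness $v_i$ with $p(v_i)\leq P_{\min}/\delta$, together with a more immediate generalization to the multi-homogeneous setting of Theorem~\ref{thm:real_optbd} (Reznick factor-by-factor), whereas yours would require integrating over $m$ independent angles.

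One caveat you already flag, and which the paper shares in a slightly different form: the step $p_{\min}\,\bE_\theta[\norm{v(\theta)}^{2d}]\geq p_{\min}\cdot 2^{-d}$ requires $p_{\min}\geq 0$; correspondingly, the paper's passage from $p_{\min}\leq P_{\min}/\delta$ and $\delta\geq\delta(d)$ to $p_{\min}\leq P_{\min}/\delta(d)$ requires $P_{\min}\geq 0$. As you observe, both arguments already give $p_{\min}\leq P_{\min}/\delta< 0$ whenever $P_{\min}<0$, so the equivalence $p_{\min}>0\iff P_{\min}>0$ is delivered either way, and the quantitative upper bound is only ever invoked (in Theorem~\ref{thm:convergence}) on forms $q^{(k)}$ with $Q^{(k)}_{\min}\geq 0$.
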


%Furthermore, our proof is simpler, as we merely apply the standard quantum de Finetti theorem as opposed to developing a specialized \textit{real} quantum de Finetti theorem for our purposes (as in~\cite{doherty2012convergence}).
%This proposition is somewhat surprising, given that the equivalence~\eqref{eq:real_opt} is known to not hold for general symmetric matrices $M \in \Hom(S^d(\real^n))$.
\begin{proof}[Proof of Theorem~\ref{thm:real_opt}]
%It will be convenient to view $M(p)$ as a map on $(\complex^n)^{\otimes d}$ by setting it equal to zero on the orthogonal complement to $S^d(\complex^n)$.
%Let $\setft{sgn} : \real \rightarrow \{0, 1,-1\}$ be the sign function. We will prove that $\sgn(p_{\min})=\sgn(p_{\min}^{\complex})$. This will complete the proof as follows: Let
%\ba
%q(x)= p(x)- p_{\min} \cdot s(x)^d \in \real[x]_{2d}.
%\ea
%Then $q_{\min}= 0$, so $q_{\min}^{\complex}=0$. But $q_{\min}^{\complex}=p_{\min}^{\complex} -s_{d} p_{\min}$, completing the proof.
%
%It remains only to prove that $\sgn(p_{\min})=\sgn(p_{\min}^{\complex})$, for which it suffices to prove that
%\ba
%p_{\min}^{\complex}<0 \Rightarrow p_{\min}<0.
%\ea
%(Indeed, the reverse implication is trivial, and the statement with reversed inequalities follows from negating $p$.)
Let $z \in \Vc$ be a unit vector for which $P_{\min}=P(z,z^\dg)$. By definition, $P(z,z^\dg)=\tr((zz^\dg)^{\otimes d} P)$. Let $x,y \in \Vr$ be such that $z=x+iy$. Recall that $P$ is invariant under partial transposition along any of the $d$ factors of $\Vc$. In particular, $P=\frac{1}{2}(P+P^{\t_1})$, where $(\cdot)^{\t_1}$ denotes the partial transpose on the first factor. Thus,
\ba
\tr((zz^\dg)^{\otimes d}P)&= \frac{1}{2} \tr((zz^\dg)^{\otimes d}P)+ \frac{1}{2} \tr((zz^\dg)^{\otimes d}P^{\t_1})\\
					&=\frac{1}{2} \tr((zz^\dg)^{\otimes d}P)+ \frac{1}{2} \tr((zz^\dg)^{\t}\otimes (zz^\dg)^{\otimes d-1}P)\\
					&=\tr\bigg((xx^\t+yy^\t)\otimes (zz^\dg)^{\otimes d-1} P\bigg),
\ea
where the last line follows from $zz^\dg+(zz^\dg)^{\t}=2(xx^\t+yy^\t)$. Continuing in this way for the other factors, we obtain
\ba
\tr((zz^\dg)^{\otimes d}M(p))&=\tr\left[(xx^\t+yy^\t)^{\otimes d} \; P\right]\\
&=\tr\bigg[M\big(((x^{\t})^2+(y^{\t})^2)^d\big) \bigg],
%&=p({(x\otimes x + y \otimes y)^{\otimes d}})\\
%&=p({\Pi_{2d}((x\otimes x + y \otimes y)^{\otimes d})}),
\ea
where we recall that we associate the row vector $x^{\t}$ with the linear form $x^{\t}(a) = \ip{x}{a}$, so $(x^{\t})^2$ is the polynomial $(x^{\t})^2(a) = \ip{x}{a}^2$. Let $Q=M(((x^{\t})^2+(y^{\t})^2)^d)$, and let $\delta=\tr(Q).$ By~\cite[Corollary 5.6]{reznick2013length} there exist real unit vectors ${v_1},\dots, {v_{d+1}} \in \Vr$ for which
\ba
\frac{1}{\delta} Q \in \setft{conv}\{(v_1v_1^\t)^{\otimes d},\dots, (v_{d+1}v_{d+1}^\t)^{\otimes d}\}.
\ea
Multiplying both sides by $P$ and taking the trace, we obtain
\ba
\frac{\Pmin}{\delta} \in \setft{conv}\{\tr(({v_i}{v_i}^\t)^{\otimes d} P) : i \in [d+1]\}.
\ea
Thus, there exists $i \in [d+1]$ for which $\tr(({v_i}{v_i}^\t)^{\otimes d} P) \leq \frac{\Pmin}{\delta}$. Recall that $ \tr(({v_i}{v_i}^\t)^{\otimes d} P)= p(v_i) \geq p_{\min}$.

To complete the proof, it suffices to show that $\delta \geq \delta(d)$. Let $e_1,e_2 \in \Vr$ be orthonormal, and let $U \in \Orth(\Vr)$ be an orthogonal operator for which $U(xx^\t+yy^\t)U^{\t}=t e_1 e_1^\t+(1-t)e_2 e_2^\t$ for some $t \in [0,1]$. Redefine $Q \mapsto U^{\otimes d} Q (U^{\t})^{\otimes d}$, which has the same trace and is given by%Let $p(x,y)=(t x^2 + (1-t) y^2)^d \in S^d(\real^2)$. Note that
\ba
Q&=M((t (e_1^{\t})^2 + (1-t) (e_2^{\t})^2)^d)\\
&=\sum_{j=0}^d \binom{d}{j} t^{2j} (1-t)^{2(d-j)} M((e_1^{\t})^{2j} (e_2^{\t})^{2(d-j)}).
\ea
Note that $\tr(M((e_1^{\t})^{2j} (e_2^{\t})^{2(d-j)}))=\binom{d}{j} \binom{2d}{2j}^{-1}$ by Remark~\ref{rmk:trace}. Hence,
\ba
\delta=\sum_{j=0}^d \binom{d}{j}^2 \binom{2d}{2j}^{-1}\;\; t^j (1-t)^{d-j}.
\ea
To complete the proof, it suffices to prove the following:
\begin{lemma}\label{lemma:technical} It holds that
\ba
\min_{t \in [0,1]} \delta= {2^d}{\binom{2d}{d}^{-1}}.
\ea
\end{lemma}
The proof of this lemma is quite technical, and we defer it to Appendix~\ref{ap:lemma}. Alternatively, one can easily see that $\delta \geq \min_{j} \binom{d}{j}\binom{2d}{2j}^{-1}$, and this expression can be substituted for $\delta(d)$ in the Theorem statement for an easier but slightly weaker bound.
\end{proof}

\begin{remark}\label{rmk:nec} Theorem~\ref{thm:real_opt} does not hold if we replace $P$ by an arbitrary Gram operator of $p$. For example, let $\{e_1, e_2\}\subseteq \real^2$ be an orthonormal basis, let
\ba
\U=\spn_{\real}\{e_1 \otimes e_2 + e_2 \otimes e_1,\;\; e_1 \otimes e_1 - e_2 \otimes e_2\} \subseteq S^2(\real^2),
\ea
and let $\Pi_{\U} \in \End(S^2(\real^2))$ be the orthogonal projection onto $\U$, which is a Gram operator for $\frac{1}{2}\norm{x}^4$. Viewing $\Pi_\U$ as a complex operator, we have
%\ba
%\max_{\substack{v \in \real^2 \\ \norm{v}=1}} \ip{v \otimes v}{\Pi_{\U} \; v \otimes v} =1/2,
%\ea
%but
\ba
\max_{\substack{v \in \complex^2 \\ \norm{v}=1}} \ip{v \otimes v}{\Pi_{\U}\;  v \otimes v} = 1
\ea
because $\frac{1}{2}(e_1+i e_2)^{\otimes 2} \in \U \otimes_{\real} \complex$. Let $Q=\frac{3}{4}\Pi_{S^2(\real^2)}-\Pi_{\U}$, which is a Gram operator for $p(x):=\frac{1}{4}\norm{x}^4$. Then $p_{\min}=1/4$, but when $Q$ is viewed as a Hermitian operator, $Q_{\min}=-1/4$.
\end{remark}

%For a matrix $A \in \Hom_{\real}(S^k(\complex^n))$, we define the \textit{partial trace} of $A$ in $k-d$ factors to be
%\ba
%\tr_{k-d}(A):=(\tr^{\otimes k-d} \otimes \I_{\Hom_{\real}(\complex^n)}^{\otimes d})(A),
%\ea
%where $\tr(\cdot)$ denotes the trace map. See also~\cite[Section 2.1.3]{doherty2012convergence}.
\subsection{HRSOS hierarchy}
For a real form $p \in S^{2d}(\Vr^*)$ recall that we define $P=M(p) \in \Herm(S^d(\Vc))$ to be the maximally symmetric Gram operator of $P$, and
\ba
P_k(z,z^\dg) = P(z,z^\dg) \norm{z}^{2(k-d)} \in \Herm(S^k(\Vc)),
\ea
when $k \geq d$. Similarly, we let $N^{(d)}=M(\norm{x}^{2d}) \in \Herm(S^d(\Vc))$, and let
\ba
N^{(d)}_k (z,z^\dg)=N^{(d)} (z,z^\dg) \norm{z}^{2(k-d)} \in \Herm(S^k(\Vc))
\ea
when $k \geq d$. As operators, these are given by $P_k=\Pi_k(P \otimes \I_{\Vc}^{\otimes k-d})\Pi_k$ and ${N_k^{(d)} = \Pi_k(N^{(d)} \otimes \I_{\Vc}^{\otimes k-d})\Pi_k}$. We also recall that $\kappa(\cdot)$ is the condition number, and
\ba\label{eq:delta}
\delta(d)={2^d}{\binom{2d}{d}^{-1}}.
\ea
\begin{theorem}[HRSOS hierarchy]\label{thm:convergence}
Let $p \in S^{2d}(\Vr^*)$ be a real form of even degree, let $P=M(p)$, and for each $k \geq d$ let
\ba
\eta_k=\lambda_{\min}((N_k^{(d)})^{-1/2}P_k (N_k^{(d)})^{-1/2}).
\ea
Then $\eta_d \leq \eta_{d+1} \leq \dots$ and $\lim_k \eta_k = \pmin$. Furthermore,
\ba
p_{\min}-\eta_k \leq \norm{P}_{\infty}\left(1+\kappa(N^{(d)})\right) \frac{4 d (n-1)}{\delta(d)(k+1)} = O\left(\frac{1}{k}\right).
\ea
\end{theorem}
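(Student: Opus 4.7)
The plan is to combine the HSOS convergence bound (Theorem 3.2) with the real-to-Hermitian reduction (Theorem 4.1), applied at each level $k$ to the shifted form $q_{\eta_k}(x) := p(x) - \eta_k \norm{x}^{2d}$. The key observation is that $\eta_k$ is the largest scalar $\eta$ for which $P_k - \eta N_k^{(d)} \succeq 0$, equivalently the largest $\eta$ such that the level-$k$ extension of the Hermitian form $H_\eta := P - \eta N^{(d)}$ is PSD as an operator on $S^k(\Vc)$. In particular, $H_{\eta_k}$ is the maximally symmetric Gram operator of $q_{\eta_k}$, and $(H_{\eta_k})_k = P_k - \eta_k N_k^{(d)}$ satisfies $\lambda_{\min}((H_{\eta_k})_k) = 0$ by construction.

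First I would verify the easy upper bound $\eta_k \leq p_{\min}$ by evaluating the generalized Rayleigh quotient at $\psi = x^{\otimes k}$ for a real unit minimizer $x \in \Vr$ of $p$: namely, $\ip{x^{\otimes k}}{P_k x^{\otimes k}} = p(x)\norm{x}^{2(k-d)} = p_{\min}$ and $\ip{x^{\otimes k}}{N_k^{(d)} x^{\otimes k}} = \norm{x}^{2k} = 1$. For monotonicity $\eta_k \leq \eta_{k+1}$, I would use the polarization identity $(H_\eta)_{k+1} = \Pi_{k+1}((H_\eta)_k \otimes \I_{\Vc})\Pi_{k+1}$ as operators on $S^{k+1}(\Vc)$; hence $(H_{\eta_k})_k \succeq 0$ implies $(H_{\eta_k})_{k+1} \succeq 0$, which in turn gives $\eta_{k+1} \geq \eta_k$.

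The heart of the argument is the quantitative rate. Since $\lambda_{\min}((H_{\eta_k})_k) = 0$, Theorem 3.2 applied to $H_{\eta_k}$ yields
\[
(H_{\eta_k})_{\min} \leq \norm{H_{\eta_k}}_\infty \cdot \frac{4d(n-1)}{k+1}.
\]
On the other hand, since $(q_{\eta_k})_{\min} = p_{\min} - \eta_k \geq 0$ by the previous step, the upper bound in Theorem 4.1 applied to $q_{\eta_k}$ (which is informative precisely because $(q_{\eta_k})_{\min} \geq 0$) gives $(H_{\eta_k})_{\min} \geq \delta(d)(p_{\min} - \eta_k)$. Chaining these two inequalities produces the desired rate, modulo a uniform bound on $\norm{H_{\eta_k}}_\infty$.

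The main obstacle, and likely the least obvious step, is to control $\norm{H_{\eta_k}}_\infty$ by a $k$-independent constant. My approach is to use the triangle inequality $\norm{H_{\eta_k}}_\infty \leq \norm{P}_\infty + |\eta_k|\,\norm{N^{(d)}}_\infty$ and bound $|\eta_k| \leq \norm{P}_\infty / \lambda_{\min}(N^{(d)})$. Since $\eta_k$ is a generalized Rayleigh quotient, it suffices to show $\lambda_{\min}(N_k^{(d)}) \geq \lambda_{\min}(N^{(d)})$. For $\psi \in S^k(\Vc)$, $\Pi_k\psi = \psi$ gives $\ip{\psi}{N_k^{(d)}\psi} = \ip{\psi}{(N^{(d)} \otimes \I_{\Vc}^{\otimes k-d})\psi} \geq \lambda_{\min}(N^{(d)})\norm{\psi}^2$, where Proposition 2.2 guarantees $\lambda_{\min}(N^{(d)}) > 0$. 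Assembling everything yields $\norm{H_{\eta_k}}_\infty \leq \norm{P}_\infty(1 + \kappa(N^{(d)}))$ and the stated convergence rate; the limit $\eta_k \to p_{\min}$ then follows immediately.
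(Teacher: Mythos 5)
Your proof is correct and follows essentially the same strategy as the paper's: recast $\eta_k$ as the largest $\eta$ with $P_k - \eta N_k^{(d)} \succeq 0$, study the shifted form $q_{\eta_k} = p - \eta_k\norm{x}^{2d}$ whose level-$k$ Gram operator has minimum eigenvalue zero by construction, then chain Theorem~\ref{thm:hsos} with Theorem~\ref{thm:real_opt} and control $\norm{Q^{(k)}}_\infty$ via the triangle inequality and the bound $|\eta_k|\leq\norm{P}_\infty/\lambda_{\min}(N^{(d)})$.

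There is one small spot where your argument is actually \emph{cleaner} than the printed proof. For monotonicity, you invoke $(H_\eta)_{k+1} = \Pi_{k+1}((H_\eta)_k \otimes \I_{\Vc})\Pi_{k+1}$, so that feasibility of $\eta_k$ at level $k$ (i.e.\ $(H_{\eta_k})_k\succeq 0$) propagates directly to level $k+1$ by conjugation by a projection. The paper instead takes a unit ``eigenvector of $P_{k+1}$ with eigenvalue $\eta_{k+1}$'' and uses partial trace and convexity; strictly speaking $\eta_{k+1}$ is a generalized eigenvalue of the pencil $(P_{k+1}, N_{k+1}^{(d)})$ rather than an eigenvalue of $P_{k+1}$ alone, so that passage requires minor patching, whereas yours does not. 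Your bound on $|\eta_k|$ also goes through: you observe $\norm{P_k}_\infty\leq\norm{P}_\infty$ and $\lambda_{\min}(N_k^{(d)})\geq\lambda_{\min}(N^{(d)})$, which is equivalent to the paper's feasibility-based argument (that $\eta = -\norm{P}_\infty/\lambda_{\min}(N^{(d)})$ yields $M(p-\eta\norm{x}^{2d})\succeq 0$ and hence is feasible at every level). Both routes establish the same uniform estimate $\norm{H_{\eta_k}}_\infty \le \norm{P}_\infty(1+\kappa(N^{(d)}))$, and the rest of your argument matches the paper line by line.
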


Numerics suggest that
\ba
\kappa(N^{(d)})=\binom{n/2+d-1}{\floor{d/2}},
\ea
where the binomial function is extended to non-integer inputs via the Gamma function. However we do not have a proof of this.

Modulo this unproven expression for $\kappa(N^{(d)})$, this gives an explicit dependence on $k,n,d,$ and $\norm{P}_{\infty}$ for our HRSOS hierarchy, which scales as $\mathcal{O}( \norm{P}_{\infty} n^{\frac{d}{2}+1}k^{-1})$ for fixed $d$. In particular, this also upper bounds the additive error of the RSOS hierarchy. For reference, we now review the scaling of the upper bounds known for the RSOS hierarchy, for fixed $d$. In~\cite[Theorem 1]{fang2021sum} a bound of ${ (p_{\max}-p_{\min}) C_d n^2 k^{-2}}$ is proven for an unspecified constant $C_d$. Earlier bounds for the RSOS hierarchy scale as ${\mathcal{O}((p_{\max}-p_{\min}) nk^{-1})}$ with an explicit constant~\cite{reznick1995uniform,faybusovich2004global,doherty2012convergence}, see in particular~\cite[Theorem 1]{faybusovich2004global} and the discussion following~\cite[Corollary 7.1]{doherty2012convergence}. Note that $p_{\max}-p_{\min} \leq \norm{P}_{\infty}$, so a bound in terms of $p_{\max}-p_{\min}$ is a priori more desirable than a bound in terms of $\norm{P}_{\infty}$.

\paragraph{{Note added:}} In a recent follow up work~\cite[Theorem 5.6]{blomenhofer2024moment} an upper bound for the additive error of the HRSOS hierarchy is proven which is $(p_{\max}-p_{\min})C_{n,d}$ for an unspecified term $C_{n,d}$ that depends on $n$ and $d$. This work also provides an alternate proof of a similar statement as Theorem~\ref{thm:convergence}, with similar dependence on $k,n,d,$ and $\norm{P}_{\infty}$~\cite[Theorem 5.4]{blomenhofer2024moment}.\footnote{We note that their bound does not rely on any unproven numerical expressions, such as our expression for $\kappa(N^{(d)})$.} This work furthermore proves a lower bound for the HRSOS hierarchy which scales as $\Omega(1/k^2)$ in the parameter $k$~\cite[Theorem 5.8]{blomenhofer2024moment}.

 %The earlier bound of~\cite{doherty2012convergence} (corrected in~\cite[Theorem 1.1]{laurent2019notes}) for the RSOS hierarchy is $O(1/k)$ in $k$, linear in $p_{\max}-p_{\min}$, and has unspecified dependence on $n$ and $d$. 

%There exists a constant $C_{n,d}$ for which $\norm{P}_{\infty}\leq C_{n,d} |p_{\max}| \leq C_{n,d} $

\begin{proof}[Proof of Theorem~\ref{thm:convergence}]
First note that
\ba
\eta_k&=\lambda_{\min}((N_k^{(d)})^{-1/2}P_k (N_k^{(d)})^{-1/2})\\
	&=\max\{ \eta : (N_k^{(d)})^{-1/2}P_k (N_k^{(d)})^{-1/2} - \eta \; \I_{S^k(\Vr)} \succeq 0 \}\\
	&=\max\{\eta : P_k - \eta \; N_k^{(d)} \succeq 0\}\\
	&=\max\{ \eta : M(p-\eta \cdot \norm{x}^{2d})_k \succeq 0 \}.
\ea
Let
\ba
q^{(k)}(x) = p(x)-\eta_k \norm{x}^{2d} \in S^{2d}(\Vr^*),
\ea
and let $Q^{(k)}=M(q^{(k)}).$ For the inequality $\eta_k \leq p_{\min}$, note that since $Q^{(k)}_k \succeq 0$, we have
\ba
p(x)-\eta_k&=\ip{x^{\otimes k}}{Q_k^{(k)}  x^{\otimes k}}\geq 0
\ea
for all $x \in \Vr$, so $p_{\min} - \eta_k \geq 0$. For the inequality $\eta_{k+1} \geq \eta_k$, let $\psi \in S^{k+1}(\Vc)$ be a unit eigenvector for $P_{k+1}$ with eigenvalue $\eta_{k+1}$. Then
\ba
\eta_{k+1}=\tr(P_{k+1} \psi \psi^\dg) = \tr(P_k \tr_{1}(\psi \psi^\dg))\geq \eta_k,
\ea
where the second equality follows from $\tr_1(P_{k+1})=P_k$, and the inequality follows by convexity.

%by Proposition~\ref{prop:real_opt} we have $p_{\min}=s_{n,d}^{-1} \tr((uu^*)^{\otimes d} M(p))$ for some $u \in \Sp(\complex^n)$. It follows that
%\ba
%\nu_k &\leq s_d^{-1} \tr((uu^*)^{\otimes k}  M_k(p))\\
%&=s_d^{-1} \tr((uu^*)^{\otimes k} (M(p) \otimes \I_n^{\otimes k-d}))\\
%&=s_d^{-1} \tr((uu^*)^{\otimes d} M(p))\\
%&=p_{\min}.
%\ea
For the bound, note that
\ba\label{eq:qmin}
Q_{\min}^{(k)} &\leq \norm{Q^{(k)}}_{\infty} \frac{4d (n-1)}{k+1}\\
		&\leq \norm{P}_{\infty} \left(1+\kappa(N^{(d)})\right) \frac{4d(n-1)}{k+1}.
\ea
The first line follows from Theorem~\ref{thm:hsos} and the fact that $\lambda_{\min}(Q_k^{(k)}) = 0$. The second line follows from
\ba
\norm{Q^{(k)}}_{\infty}&\leq \norm{P}_{\infty}+ \abs{\eta_k} \norm{N^{(d)}}_{\infty} \\
			&\leq \norm{P}_{\infty}\left(1+\frac{\lambda_{\max}(N^{(d)})}{\lambda_{\min}(N^{(d)})}\right)
\ea
Here, the first line is the triangle inequality. The second line follows from $\norm{N^{(d)}}_{\infty}=\lambda_{\max}(N^{(d)})$ by Proposition~\ref{prop:Mspos}, and $\abs{\eta_k}\leq \norm{P}_{\infty} \;\lambda_{\min}(N^{(d)})^{-1}$ since choosing $\eta$ equal to minus the righthand side would guarantee $M(p-\eta \; \norm{x}^{2d}) \succeq 0$.

It follows from Theorem~\ref{thm:real_opt} and~\eqref{eq:qmin} that
\ba
p_{\min}-\eta_k &= q_{\min}^{(k)}\\
 &\leq \frac{Q_{\min}^{(k)}}{\delta(d)} \\
% &\leq \norm{M(p)}_{\infty}\left(1+\frac{\lambda_{\max}(M(s_{n,d}))}{\lambda_{\min}(M(s_{n,d}))}\right) \frac{4d(n-1)}{c_d (k+1)}\\
 &=\norm{P}_{\infty}\left(1+\kappa(N^{(d)})\right) \frac{4d(n-1)}{\delta(d) (k+1)}.
\ea
This completes the proof.
\end{proof}

\section{Numerical implementation and examples}\label{sec:numerics}

Let $\Vr=\real^n$. In this section we implement our polynomial optimization hierarchy described in Theorem~\ref{thm:convergence}. While the quantity $\eta_k=\lambda_{\min}((N_k^{(d)})^{-1/2}P_k (N_k^{(d)})^{-1/2})$ that defines the $k$-th level of our hierarchy is the minimum eigenvalue of a matrix, it should not be computed as such, since doing so would require computation of the inverse of the matrix $N_k^{(d)}=M(\norm{x}^{2d})_k$. %\footnote{One could also store $N_k^{(d)}$ in a lookup table, or possibly write down an explicit formula similar to Appendix~\ref{ap:mk}. We have chosen to rephrase the problem as a generalized eigenvalue/eigenvector problem instead.}
An alternate way of computing $\eta_k$ is to instead solve the generalized eigenvalue/eigenvector problem
\begin{align}\label{eq:generalized_eigenvector}
    P_k \; \psi  = \lambda \; N_k^{(d)} \; \psi
\end{align}
in $\psi \in S^k(\Vr)$ and $\lambda \in \real$.

Indeed, the minimum generalized eigenvalue $\lambda$ (i.e., the minimal $\lambda$ for which there is a nonzero vector $\psi$ solving Equation~\eqref{eq:generalized_eigenvector}) is equal to $\eta_k$, since
\[
    P_k \; \psi  = \lambda \; N_k^{(d)} \; \psi \quad \Longleftrightarrow \quad (N_k^{(d)})^{-1}P_k\; \psi = \lambda \; \psi,
\]
and is it a standard fact that the matrices $AB$ and $BA$ have the same eigenvalues as each other (see \cite[Theorem~1.3.22]{HJ13}, for example, and apply this fact to the matrices $A = (N_k^{(d)})^{-1/2}$ and $B = (N_k^{(d)})^{-1/2}P_k$).

However, this generalized eigenvalue can be found without inverting or multiplying any matrices. Furthermore, there are extremely fast numerical algorithms for solving this problem that can exploit the extreme sparsity of $P_k$ and $N_k^{(d)}$ \cite{Ste02,magron2023sparse}; $N_k^{(d)}$ is sparse because the form $\norm{x}^{2d}$ itself is sparse (i.e., most of its coefficients are equal to $0$), and $P_k$ is sparse when $k$ is large even if $p$ is dense since
\ba
P_k= \Pi_{k} (P \otimes \I_{\Vr}^{\otimes {k-d}})\Pi_{k},
\ea
and $\I_{\Vr}^{\otimes k-d}$
%$\I_{n_1}^{\otimes k-d_1}\otimes \dots \otimes \I_{n_m}^{\otimes k-d_m}$
is sparse. These generalized eigenvalue algorithms have been implemented in ARPACK \cite{arpack}, making them available out-of-the-box in SciPy, Mathematica, MATLAB, and many other popular computational software packages. We have implemented the computation of $\eta_k$ in the QETLAB package for MATLAB \cite{qetlab}.

We will now present several examples to illustrate how well our HRSOS hierarchy performs numerically compared to the RSOS hierarchy~\cite{reznick1995uniform, doherty2012convergence,fang2021sum}, the DSOS hierarchy~\cite{AM19}, and the harmonic hierarchies of~\cite{cristancho2024harmonic}. In all of these hierarchies, the $k$-th level performs computations in the space of homogeneous polynomials of degree $2k$, and the first non-trivial level is $k=d$. These examples all support the following conclusions:
\begin{itemize}
    \item If the polynomial is small enough to be bounded by the RSOS hierarchy, then the RSOS hierarchy should be used, since it typically provides better bounds per unit of computation time. However, since it relies on semidefinite programming, it can only be applied to comparably small problems, e.g. degree-4 polynomials with $25$ or fewer variables.

    \item If the polynomial is \emph{even} (i.e., in each term, each variable is raised to an even power), specialized optimization techniques have been developed \cite{polya1974george,de2005equivalence,de2015error,AM19,ahmed2019maximization,ahmed2021two,vargas2023copositive}. For example, the diagonally-dominant sum-of-squares (DSOS) hierarchy of \cite{AM19} performs quite well when applied to even polynomials, and has a small enough memory footprint that it can be applied to larger problems, e.g. degree-4 polynomials with $70$ or fewer variables.
  %  computes bounds on homogeneous polynomials via linear programming, which is less memory-intensive than the semidefinite programs required by the RSOS hierarchy. It
%,laurent2023exactness

    \item For larger problems (e.g., a degree-4 polynomial with more than $25$ variables, or a degree-4 even polynomial with more than $70$ variables), our hierarchy outperforms existing hierarchies.
    For example, our hierarchy provides better bounds per unit of computation time than the optimization-free hierarchies of \cite{cristancho2024harmonic} for all problem sizes. Furthermore, our hierarchy provides significantly better bounds (at the expense of somewhat higher computation time) than the DSOS hierarchy, and can be applied to much larger problems, e.g. degree-4 polynomials with $90$ variables.
    %, or a degree-4 even polynomial with more than $70$ variables), our hierarchy outperforms existing hierarchies.
\end{itemize}

\begin{example}[Homogeneous Motzkin polynomial]\label{exam:motzkin}
Let
\ba
p(x) = x_1^2 x_2^2 (x_1^2 + x_2^2 - 3x_3^2) + x_3^6
\ea
be the homogeneous Motzkin polynomial of degree $2d=6$~\cite[Section~3.2]{Lau09}. This polynomial is non-negative but not a sum of squares; in fact, its minimum value on the unit sphere is exactly $0$. Because this polynomial has so few non-zero coefficients, the matrix $P_k$ is extremely sparse, so our hierarchy can be run at extremely high levels---on standard desktop hardware\footnote{A 2.10GHz Core~i7 with 16Gb of RAM.} we have been able to go up to level $k=d+2000$. Lower bounds on the minimum value of this polynomial at various levels of the hierarchy, and the time required to compute those bounds, are illustrated and compared against numerics provided by the optimization-free hierarchies of \cite{cristancho2024harmonic} in Table~\ref{tab:motzkin_numerics} and Figure~\ref{fig:motzkin_numerics}. In particular, our hierarchy outperforms those optimization-free hierarchies both per level of the hierarchy and per unit of real computation time.

We note that the Motzkin polynomial is even and has just $3$ variables, so the RSOS and DSOS hierarchies both perform extremely well on it: they can compute the exact minimum value (which is $0$) in less than a second.
\end{example}

\begin{table}[!htb]
    \begin{center}
        \begin{tabular}{ @{ \ }ccc@{ \qquad }cc@{ \qquad }cc@{ \ } }
            \toprule
            & \multicolumn{2}{c}{\textbf{our hierarchy (HRSOS)}}& \multicolumn{2}{c}{\textbf{squares} \cite{cristancho2024harmonic}} & \multicolumn{2}{c}{\textbf{Fawzi} \cite{cristancho2024harmonic}} \\
            $k$ & lower bound & time & l.b. & time & l.b. & time \\ \midrule
            $10$ & $-0.028748$ & $0.010$ s & $-0.436125$ & $0.002$ s & $-0.070812$ & $0.013$ s \\
            $20$ & $-0.010490$ & $0.059$ s & $-0.120558$ & $0.015$ s & $-0.022299$ & $0.092$ s \\
            $40$ & $-0.004682$ & $0.361$ s & $-0.045407$ & $0.021$ s & $-0.008391$ & $1.019$ s \\
            $80$ & $-0.002225$ & $2.488$ s & $-0.019759$ & $0.082$ s & $-0.003561$ & $13.287$ s \\
            $160$ & $-0.001086$ & $16.586$ s & $-0.009222$ & $0.326$ s & $-0.001635$ & $202.372$ s \\\bottomrule
        \end{tabular}
        \caption{Lower bounds on the minimum value of the Motzkin polynomial, as computed by the $k$-th level of our HRSOS hierarchy, the squares-based hierarchy of \cite{cristancho2024harmonic}, and the Fawzi hierarchy of \cite{cristancho2024harmonic}, as well as the time required to compute those bounds.}\label{tab:motzkin_numerics}
    \end{center}
% MATLAB CODE:
% n = 3;% number of variables
% d = 3;% half the degree of the polynomial
% p = zeros(nchoosek(n+2*d-1,2*d),1);% initialize p to have the correct size
% p(exp2ind([4 2 0])) = 1;% the "4 2 0" here are the exponents in the term x^4y^2z^0, while the "1" is the coefficient of this term
% p(exp2ind([2 4 0])) = 1;
% p(exp2ind([2 2 2])) = -3;
% p(exp2ind([0 0 6])) = 1;% now p represents the Motzkin polynomial
% k = 50;
% PolynomialOptimize(p,n,d,k,'min')
\end{table}

% However, our hierarchy based on (generalized) eigenvalues is even less memory-intensive and can thus be used to bound polynomials of even higher degree and even more variables. Furthermore, it seems to produce better bounds on randomly-generated polynomials than the DSOS hierarchy does.

\begin{example}[Random low-variable quartic polynomials]\label{exam:random_poly}
    Our hierarchy performs quite well on randomly-generated polynomials. To illustrate this fact, we generated dense random\footnote{All coefficients non-zero, independently chosen from a standard normal distribution.} degree-$4$ polynomials with $3$, $6$, and $10$ variables, and applied as many levels as possible of our hierarchy and the optimization-free hierarchies of \cite{cristancho2024harmonic} to all three of those polynomials.

Lower bounds on the minimum value of this polynomial at various levels of the hierarchy, and the time required to compute those bounds, are illustrated in Table~\ref{tab:quartic_numerics} and Figures~\ref{fig:quartics_graph}(b--d). In all cases, our hierarchy provides better bounds both per level of the hierarchy and per unit of real computation time, with the improvement provided by our hierarchy getting more pronounced as the problem size increases. For example, the best lower bound on the random 6-variable quartic that could be computed in less than 6~hours by the optimization-free hierarchies of~\cite{cristancho2024harmonic} (at level~$k = 16$ of those hierarchies) was beaten in less than a second by the $k = 8$ level of our hierarchy. Furthermore, those optimization-free hierarchies were unable to compute any bounds at all on the 10-variable quartic due memory limitations. %This is not entirely surprising, as the Harmonic hierarchy requires a discrete optimization over a set of size approximately $k^n$ (see Section~\ref{sec:related_work}).\
     By contrast, our hierarchy could be run up to $k=11$ in this case.
     
The two main bottlenecks in the hierarchies of~\cite{cristancho2024harmonic} are constructing a degree-$2k$ cubature rule (which does not depend on $p$, but depends on a choice of cubature rule) and minimizing a certain degree-$2k$ polynomial (which depends on $p$ and a choice of kernel) at the points of that cubature rule. The cubature rule they consider has size $2(k+1)^{n-1}$~\cite[Corollary 2.3]{cristancho2024harmonic}. By storing the entire cubature rule, their Julia implementation runs into memory issues already for problems as small as degree-$4$ polynomials in $10$ variables. It is possible that this can be avoided by generating elements sequentially, but we have not attempted to do so. When we do not encounter memory issues, we find that the second bottleneck takes the majority of the time, and even if we only account for the time taken in the second bottleneck, we obtain similar plots as in Figures~\ref{fig:quartics_graph}(a--c).

    We did not include bounds from the DSOS hierarchy in Figures~\ref{fig:quartics_graph}(b--d) since it did not converge for the polynomials that we generated. For example, for the $3$-variable quartic polynomial used to generate Figure~\ref{fig:quartics_graph}(b), the first 10 non-trivial levels of the DSOS hierarchy all produced a lower bound with $\log_{10}(1/\text{error}) \approx -0.0931$; a worse bound than the one produced by the first non-trivial level ($k=2$) of each of the other hierarchies.

\end{example}

\begin{table}[!htb]
    \begin{center}
        \begin{tabular}{ @{ \ }ccc@{ \qquad }cc@{ \qquad }cc@{ \ } }
            \toprule
            & \multicolumn{2}{c}{\textbf{our hierarchy (HRSOS)}}& \multicolumn{2}{c}{\textbf{squares} \cite{cristancho2024harmonic}} & \multicolumn{2}{c}{\textbf{Fawzi} \cite{cristancho2024harmonic}} \\
            $k$ & error & time & error & time & error & time \\ \midrule
            $6$ & $0.337860$ & $0.106$ s & $0.635424$ & $10.735$ s & $0.769963$ & $14.998$ s\\
$8$ & $0.184064$ & $0.668$ s & $0.467328$ & $36.075$ s & $0.574778$ & $47.137$ s\\
$10$ & $0.126534$ & $3.145$ s & $0.368130$ & $107.739$ s & $0.456129$ & $124.972$ s\\
$12$ & $0.096335$ & $15.033$ s & $0.298842$ & $230.852$ s & $0.375753$ & $283.068$ s\\
$14$ & $0.077792$ & $70.045$ s & $0.251596$ & $562.863$ s & $0.310968$ & $1003.156$ s\\
$16$ & $0.065239$ & $233.127$ s & $0.214861$ & $10698.535$ s & $0.268982$ & $18931.345$ s\\
\bottomrule
        \end{tabular}
        \caption{The error when computing a lower bound on the minimum value of a random dense 6-variable quartic homogeneous polynomial, as well as the time required to compute these bounds, when using the $k$-th level of our HRSOS hierarchy, the squares-based hierarchy of \cite{cristancho2024harmonic}, and the Fawzi hierarchy of \cite{cristancho2024harmonic}.}\label{tab:quartic_numerics}
    \end{center}
% MATLAB CODE:
% n = 3;% number of variables
% d = 3;% half the degree of the polynomial
% p = zeros(nchoosek(n+2*d-1,2*d),1);% initialize p to have the correct size
% p(exp2ind([4 2 0])) = 1;% the "4 2 0" here are the exponents in the term x^4y^2z^0, while the "1" is the coefficient of this term
% p(exp2ind([2 4 0])) = 1;
% p(exp2ind([2 2 2])) = -3;
% p(exp2ind([0 0 6])) = 1;% now p represents the Motzkin polynomial
% k = 50;
% PolynomialOptimize(p,n,d,k,'min')
\end{table}

\begin{figure}[!htb]
    \centering

    % MOTZKIN
    \begin{subfigure}[t]{0.47\textwidth}
        \centering
    	% [inline block 0: 4 envs, 132710 chars -> data_tex | \begin{tikzpicture}[xscale=1.45,yscale=1.242857]     		% GRID...]

        \caption{A random dense 10-variable quartic.}
    \end{subfigure}

	\caption{A plot of the number of decimal places of accuracy versus (the logarithm of) the number of seconds required to achieve that accuracy for our HRSOS hierarchy (blue, solid), the squares-based hierarchy of \cite{cristancho2024harmonic} (red, long dashes), and the Fawzi hierarchy of \cite{cristancho2024harmonic} (green, short dashes). In all cases, our hierarchy outperforms the others. The exact minimum value of each of these polynomials (and thus the number of decimal places of accuracy) was found by using the RSOS hierarchy to get a lower bound and then finding a point on the sphere for which that lower bound is attained.}\label{fig:quartics_graph}
\end{figure}

\begin{example}[Random high-variable quartic polynomials]\label{exam:random_poly}
    Not only does our hierarchy produce better bounds than other non-RSOS hierarchies, but it can also be applied to much larger polynomials since it is less memory-intensive. It was noted in \cite{AM19} that while the RSOS hierarchy can only be used for quartic polynomials with up to $25$ variables or so, the DSOS hierarchy can be used for much larger quartic polynomials with as many as $70$ variables.\footnote{On our hardware, we were able to run the RSOS hierarchy up to $23$ variables and the DSOS hierarchy up to $58$ variables.} Our hierarchy can go even farther, producing bounds on dense quartic polynomials with more than $90$ variables, as illustrated in Figure~\ref{fig:DSOS_bounds}. Furthermore, our hierarchy runs in time comparable to the DSOS hierarchy, while producing significantly better bounds (see Figure~\ref{fig:DSOS_timings}).

    We did not include the results of the optimization-free hierarchies from \cite{cristancho2024harmonic} in Figure~\ref{fig:dsos_compare}, since these were unable to produce bounds for any quartic polynomials with $n \geq 10$ variables due to memory limitations.

    \begin{figure}[!htb]
        \centering

        % BOUNDS
        \begin{subfigure}[t]{\textwidth}
            \centering
        	\begin{tikzpicture}[xscale=0.15,yscale=2]
        		% GRID
        		\foreach \x in {5,10,15,...,90} \draw[color=gray!25] (\x,-0.5) -- (\x,-3.2);
        		\foreach \y in {-1,-2,-3} \draw[color=gray!25] (5,\y) -- (93,\y);

        		% TICKS on AXES
        		\foreach \x in {5,10,15,...,90} \draw (\x,-0.53) -- (\x,-0.44) node[anchor=south] {\footnotesize $\x$};
        		\draw (5.6,-1) -- (3.8,-1) node[anchor=east] {\footnotesize $-1$};
        		\draw (5.6,-2) -- (3.8,-2) node[anchor=east] {\footnotesize $-10$};
        		\draw (5.6,-3) -- (3.8,-3) node[anchor=east] {\footnotesize $-100$};

        		% CURVES
            \begin{scope}
                \clip (0,0) rectangle (95,-3.2);
                % Our (MATLAB) numerics
        		\draw[color=blue] (5,-1.293087304) -- (6,-1.378691087) -- (7,-1.418903747) -- (8,-1.448625945) -- (9,-1.479068746) -- (10,-1.505849172) -- (11,-1.517603996) -- (12,-1.570380516) -- (13,-1.581293457) -- (14,-1.757789373) -- (15,-1.765139183) -- (16,-1.769934461) -- (17,-1.778044401) -- (18,-1.79978964) -- (19,-1.807709252) -- (20,-1.820036054) -- (21,-1.828592492) -- (22,-1.837711639) -- (23,-1.848097197) -- (24,-1.8570311) -- (25,-1.86379527) -- (26,-1.8738084) -- (27,-1.884467453) -- (28,-1.892316593) -- (29,-1.899125732) -- (30,-1.906625319) -- (31,-1.916756796) -- (32,-1.921881838) -- (33,-1.932382178) -- (34,-1.942730862) -- (35,-1.951089307) -- (36,-1.962491129) -- (37,-1.972061739) -- (38,-1.979077361) -- (39,-1.98581833) -- (40,-1.993187664) -- (41,-2.000418632) -- (42,-2.004591627) -- (43,-2.008938662) -- (44,-2.015070828) -- (45,-2.02118934) -- (46,-2.029957317) -- (47,-2.037031389) -- (48,-2.045522395) -- (49,-2.05091288) -- (50,-2.05623991) -- (51,-2.062383668) -- (52,-2.069777101) -- (53,-2.07637855) -- (54,-2.081584449) -- (55,-2.087720563) -- (56,-2.094135757) -- (57,-2.100966395) -- (58,-2.105655028) -- (59,-2.113861697) -- (60,-2.132391175) -- (61,-2.120073711) -- (62,-2.125312069) -- (63,-2.134850475) -- (64,-2.133335157) -- (65,-2.137914818) -- (66,-2.155999394) -- (67,-2.157385559) -- (68,-2.163164417) -- (69,-2.169230892) -- (70,-2.174309666) -- (71,-2.178540161) -- (72,-2.18361862) -- (73,-2.191298901) -- (74,-2.189670507) -- (75,-2.198103889) -- (76,-2.195344646) -- (77,-2.209806055) -- (78,-2.208435471) -- (79,-2.219103497) -- (80,-2.220980797) -- (81,-2.226942953) -- (82,-2.229390791) -- (83,-2.236581973) -- (84,-2.234338999) -- (85,-2.249053071) -- (86,-2.249713749) -- (87,-2.253976303) -- (88,-2.265384915) -- (89,-2.263964091) -- (90,-2.267738266) -- (91,-2.273125127) -- (92,-2.274340099) -- (93,-2.279877562);

                % DSOS numerics
        		\draw[color=Maroon,dash pattern={on 10pt off 3pt}] (5,-1.356136808) -- (6,-1.406734292) -- (7,-1.504640605) -- (8,-1.579160165) -- (9,-1.661342466) -- (10,-1.725332377) -- (11,-1.797601134) -- (12,-1.887868129) -- (13,-1.93188636) -- (14,-1.995523843) -- (15,-2.039700976) -- (16,-2.085382388) -- (17,-2.137309318) -- (18,-2.175436835) -- (19,-2.218791202) -- (20,-2.255645621) -- (21,-2.286635136) -- (22,-2.322384481) -- (23,-2.355429699) -- (24,-2.389936212) -- (25,-2.422443688) -- (26,-2.453904456) -- (27,-2.481526326) -- (28,-2.511854002) -- (29,-2.536611009) -- (30,-2.566441511) -- (31,-2.592006449) -- (32,-2.617126174) -- (33,-2.640235052) -- (34,-2.664088706) -- (35,-2.687971725) -- (36,-2.710888184) -- (37,-2.734450614) -- (38,-2.756086169) -- (39,-2.774803075) -- (40,-2.794455707) -- (41,-2.814672058) -- (42,-2.833574436) -- (43,-2.851354874) -- (44,-2.871355197) -- (45,-2.889191097) -- (46,-2.906984623) -- (47,-2.923605152) -- (48,-2.940416865) -- (49,-2.957354969) -- (50,-2.974308687) -- (51,-2.990555406) -- (52,-3.006234435) -- (53,-3.021786539) -- (54,-3.037377388) -- (55,-3.051921685) -- (56,-3.066774784) -- (57,-3.081560114) -- (58,-3.096053818);

                % RSOS numerics
        		\draw[color=ForestGreen,dashed] (5,-1.088561312) -- (6,-1.189327739) -- (7,-1.191121955) -- (8,-1.19568854) -- (9,-1.20336909) -- (10,-1.283481243) -- (11,-1.290760946) -- (12,-1.307909384) -- (13,-1.309554372) -- (14,-1.51340167) -- (15,-1.513678294) -- (16,-1.514758208) -- (17,-1.516108405) -- (18,-1.542596896) -- (19,-1.544100181) -- (20,-1.55379349) -- (21,-1.559713602) -- (22,-1.569025163) -- (23,-1.569278963);
    	 \end{scope}

        	% AXES
        	\draw[thick,-to] (5,-0.5) --node[anchor=south,shift={(0,0.75)}]{number of variables ($n$)} (93,-0.5);
        	\draw[thick,-to] (5,-0.5) --node[anchor=south,shift={(-1.1,0)},rotate=90]{lower bound} (5,-3.2);
            
            \node[draw=black,fill=white,thick,anchor=north east,shift={(-0.1,-0.1)}] at (92,-0.5) {%
                \begin{tabular}{@{}r@{ }l@{}}
                 \raisebox{2pt}{\tikz{\draw[blue] (0,0) -- (5mm,0);}}&ours\\
                 \raisebox{2pt}{\tikz{\draw[Maroon,dash pattern={on 7pt off 3pt}] (0,0) -- (5mm,0);}}&DSOS\\
                 \raisebox{2pt}{\tikz{\draw[ForestGreen,dashed] (0,0) -- (5mm,0);}}&RSOS
                \end{tabular}};
    	\end{tikzpicture}
            \caption{Lower bounds computed by level $k=d+1$ of our hierarchy, as well as the DSOS and RSOS hierarchies (higher is better).}\label{fig:DSOS_bounds}
        \end{subfigure}\\[0.5cm]

        % TIMINGS
        \begin{subfigure}[t]{\textwidth}
            \centering
        	\begin{tikzpicture}[xscale=0.15,yscale=1]
        		% GRID
        		\foreach \x in {5,10,15,...,90} \draw[color=gray!25] (\x,-1) -- (\x,5.5);
        		\foreach \y in {0,1,2,3,4,5} \draw[color=gray!25] (5,\y) -- (93,\y);

        		% TICKS on AXES
        		\foreach \x in {5,10,15,...,90} \draw (\x,-1.12) node[anchor=north] {\footnotesize $\x$} -- (\x,-0.94);
        		\draw (5.6,-1) -- (3.8,-1) node[anchor=east] {\footnotesize $0.01$};
        		\draw (5.6,0) -- (3.8,0) node[anchor=east] {\footnotesize $0.1$};
        		\draw (5.6,1) -- (3.8,1) node[anchor=east] {\footnotesize $1$};
        		\draw (5.6,2) -- (3.8,2) node[anchor=east] {\footnotesize $10$};
        		\draw (5.6,3) -- (3.8,3) node[anchor=east] {\footnotesize $100$};
        		\draw (5.6,4) -- (3.8,4) node[anchor=east] {\footnotesize $1000$};
        		\draw (5.6,5) -- (3.8,5) node[anchor=east] {\footnotesize $10000$};

        		% CURVES
            \begin{scope}
                \clip (0,-1) rectangle (95,5.5);
                % Our (MATLAB) timings
        		\draw[color=blue] (5,-1.090016305) -- (6,-0.870277891) -- (7,-0.711907345) -- (8,-0.603086138) -- (9,-0.403380915) -- (10,-0.204057449) -- (11,0.026087711) -- (12,0.174051081) -- (13,0.343231295) -- (14,0.466691199) -- (15,0.619250943) -- (16,0.751066525) -- (17,0.878896184) -- (18,1.007334191) -- (19,1.111144176) -- (20,1.223237052) -- (21,1.324336584) -- (22,1.420857253) -- (23,1.520710381) -- (24,1.61678188) -- (25,1.708778132) -- (26,1.782194113) -- (27,1.87173419) -- (28,1.932203007) -- (29,2.00431888) -- (30,2.092237776) -- (31,2.183358715) -- (32,2.267076995) -- (33,2.343038232) -- (34,2.42430176) -- (35,2.48823437) -- (36,2.583749299) -- (37,2.645578892) -- (38,2.716312418) -- (39,2.789089301) -- (40,2.859601628) -- (41,2.925078128) -- (42,2.99044048) -- (43,3.053114242) -- (44,3.120105292) -- (45,3.173972533) -- (46,3.240970075) -- (47,3.297860044) -- (48,3.367709816) -- (49,3.433848963) -- (50,3.483554445) -- (51,3.546140417) -- (52,3.602278704) -- (53,3.6463168) -- (54,3.696901021) -- (55,3.742404578) -- (56,3.789930762) -- (57,3.842351077) -- (58,3.893754843) -- (59,3.944548413) -- (60,4.017877584) -- (61,4.053750162) -- (62,4.090935196) -- (63,4.153389925) -- (64,4.184594517) -- (65,4.228210559) -- (66,4.273470988) -- (67,4.319667125) -- (68,4.381914474) -- (69,4.424154531) -- (70,4.461932978) -- (71,4.506255809) -- (72,4.552459978) -- (73,4.600995089) -- (74,4.642668888) -- (75,4.690837655) -- (76,4.73494759) -- (77,4.785512049) -- (78,4.868979097) -- (79,4.886867743) -- (80,4.911986306) -- (81,4.973505031) -- (82,5.000213935) -- (83,5.039509419) -- (84,5.085127974) -- (85,5.136792812) -- (86,5.186392506) -- (87,5.22538154) -- (88,5.262721107) -- (89,5.286734813) -- (90,5.322216548) -- (91,5.368717712) -- (92,5.399755366) -- (93,5.440528671);

                % DSOS timings
        		\draw[color=Maroon,dash pattern={on 7pt off 3pt}] (5,-0.666492271) -- (6,-0.454853763) -- (7,-0.410854979) -- (8,-0.291409155) -- (9,-0.159429735) -- (10,-0.036684489) -- (11,0.085001428) -- (12,0.235189969) -- (13,0.3591883) -- (14,0.498069212) -- (15,0.60637599) -- (16,0.712531854) -- (17,0.831039408) -- (18,0.924362001) -- (19,1.036843196) -- (20,1.130583336) -- (21,1.229860437) -- (22,1.321301965) -- (23,1.415424613) -- (24,1.498138594) -- (25,1.602162581) -- (26,1.669748543) -- (27,1.755742795) -- (28,1.835525255) -- (29,1.912445631) -- (30,1.967709028) -- (31,2.03199245) -- (32,2.099324671) -- (33,2.15995025) -- (34,2.204308236) -- (35,2.276828479) -- (36,2.340747143) -- (37,2.390110737) -- (38,2.460453373) -- (39,2.530335897) -- (40,2.56822838) -- (41,2.653309746) -- (42,2.718199006) -- (43,2.747042609) -- (44,2.826729297) -- (45,2.882826401) -- (46,2.944281369) -- (47,2.96600438) -- (48,3.03302582) -- (49,3.065141801) -- (50,3.108893627) -- (51,3.16827359) -- (52,3.219977194) -- (53,3.301699355) -- (54,3.31944362) -- (55,3.353045428) -- (56,3.422480653) -- (57,3.459108952) -- (58,3.490852721);

                % RSOS timings
        		\draw[color=ForestGreen,dashed] (5,0.436850927) -- (6,0.536363977) -- (7,0.586433137) -- (8,0.574025477) -- (9,0.712591623) -- (10,0.714429519) -- (11,0.885855175) -- (12,1.393347205) -- (13,1.420151055) -- (14,1.75334417) -- (15,1.935128781) -- (16,2.293219756) -- (17,2.499231604) -- (18,2.599544922) -- (19,2.830517392) -- (20,3.051578948) -- (21,3.285806916) -- (22,3.776099431) -- (23,3.925444163);
    	 \end{scope}

        	% AXES
        	\draw[thick,-to] (5,-1) --node[anchor=north,shift={(0,-0.75)}]{number of variables ($n$)} (93,-1);
        	\draw[thick,-to] (5,-1) --node[anchor=south,shift={(-1.1,0)},rotate=90]{time (seconds)} (5,5.5);
            
            \node[draw=black,fill=white,thick,anchor=south east,shift={(-0.1,0.1)}] at (92,-1) {%
                \begin{tabular}{@{}r@{ }l@{}}
                 \raisebox{2pt}{\tikz{\draw[blue] (0,0) -- (5mm,0);}}&ours\\
                 \raisebox{2pt}{\tikz{\draw[Maroon,dash pattern={on 7pt off 3pt}] (0,0) -- (5mm,0);}}&DSOS\\
                 \raisebox{2pt}{\tikz{\draw[ForestGreen,dashed] (0,0) -- (5mm,0);}}&RSOS
                \end{tabular}};
    	\end{tikzpicture}
            \caption{The time required to run level $k=d+1$ of our hierarchy, as well as the DSOS and RSOS hierarchies (lower is better).}\label{fig:DSOS_timings}
        \end{subfigure}

    	\caption{Plots illustrating how level $k=d+1$ of our HRSOS hierarchy of lower bounds (blue, solid) compares to level $k=d+1$ of the DSOS (red, long dashes) and RSOS (green, short dashes) hierarchies, when applied to random quartic polynomials ($d=2$). The RSOS and DSOS curves stop at $n = 23$ and $n = 58$ variables, respectively, due to memory limitations.}\label{fig:dsos_compare}
    \end{figure}
\end{example}

\section{Tensor optimization hierarchy}\label{sec:tensor}

%Let $p(x) \in \real[x]_D$ be a degree-$D$ homogeneous polynomial in $n$ variables. Note that
%\ba
%\min_{x \in \Sp(\real^n)} p(x) = \min_{v \in \Sp(\real^n)} \ip{\vec{p}}{v^{\otimes D}},
%\ea
%So minimizing $p(x)$ over the unit sphere is equivalent to minimizing the inner product of the symmetric tensor $\vec{p}$ with a symmetric product tensor. More generally, one could choose positive integers $D_1,\dots, D_m$ with $\sum_j D_j=D$, and an arbitrary tensor ${p} \in (\real^n)^{\otimes D}$, and

Let $\V_i=\real^{n_i}$ for $i=1,\dots, m$, let $D_1,\dots, D_m$ be positive integers, let $\U=S^{D_1}(\V_1^*)\ootimes S^{D_m}(\V_m^*)$, and let $p \in \U$. In this section we present a hierarchy of eigencomputations for computing
\ba\label{eq:pminbd}
p_{\min}:= \min_{\substack{x_j \in \V_j\\ \norm{x_j}=1}} p(x_1,\dots, x_m).
\ea
This analysis is similar to that of Section~\ref{sec:converge}. We then apply this hierarchy to compute the real tensor spectral norm and to optimize real biquadratic forms.

The following proposition establishes that it suffices to assume $D_1,\dots, D_m$ are all even, at the expense of adding up to $m$ new variables.
\begin{prop}\label{prop:even_gen}
If $D_1$ is odd, then
\ba
p_{\min}&=\frac{(D_1+1)^{(D_1+1)/2}}{D_1^{D_1/2}} (f \cdot p)_{\min}\\
&=\frac{(D_1+1)^{(D_1+1)/2}}{D_1^{D_1/2}} \min_{\substack{x_j \in \V_j \\ \norm{x_j}=1\\ 1 \neq j \in [m]}} \;\;\;\min_{\substack{x_1 \in \V_1'\\ \norm{x_1}=1}}  (f \cdot p)(x_1,\dots, x_m),
\ea
where $\V_1'= \V_1\oplus \real^1$ and $f=(\vec{0},1) \in (\V_1')^*$.
%where $\tilde{{\bf D}}=(D_1+1,D_2,\dots, D_m)$ and $\tilde{\bn}=(n_1+1,n_2,\dots, n_m)$, and we have embedded $\real^{n_1}$ into the first $n_1$ coordinates of $\real^{n_1+1}$.
%The second line is just the definition of $(x_{n+1} \cdot p(x))_{\min}^{(\tilde{\D}, \tilde{\U})}$.
\end{prop}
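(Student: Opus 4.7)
The plan is to reduce the minimization on the unit sphere in $\V_1'$ to the composition of a one-parameter scalar optimization (the mixing with the new coordinate) and the original minimization of $p$ over the product of spheres, then exploit the oddness of $p$ in $x_1$ (forced by $D_1$ being odd) to finish by an elementary calculus computation.

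First, identify $\V_1$ with $\V_1 \oplus \{0\} \subseteq \V_1'$ so that, via the orthogonal projection $\V_1' \surjects \V_1$, the form $p$ extends to $\V_1' \times \V_2 \times \cdots \times \V_m$ as a form independent of the last coordinate of $\V_1'$. Parameterize an arbitrary unit vector $x_1' \in \V_1'$ as $x_1' = (s \hat x_1, t)$ with $t \in [-1,1]$, $s = \sqrt{1-t^2}$, and $\hat x_1 \in \V_1$ a unit vector (chosen arbitrarily when $s=0$). Since $f(x_1') = t$ and $p$ is homogeneous of degree $D_1$ in its first argument,
\[
(f \cdot p)(x_1', x_2, \ldots, x_m) = t \, (1-t^2)^{D_1/2} \, p(\hat x_1, x_2, \ldots, x_m).
\]
Consequently, minimizing $f \cdot p$ over unit vectors in $\V_1' \times \V_2 \times \cdots \times \V_m$ decouples into a joint minimization over $t \in [-1,1]$ and over $(\hat x_1, x_2, \ldots, x_m)$ in the product of spheres on $\V_1 \times \V_2 \times \cdots \times \V_m$.

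Second, the scalar function $c(t) := t(1-t^2)^{D_1/2}$ is odd on $[-1,1]$, and a standard calculation ($c'(t) = 0$ gives $t^2 = 1/(D_1+1)$) shows
\[
C := \max_{t \in [-1,1]} c(t) = c\!\left(\tfrac{1}{\sqrt{D_1+1}}\right) = \frac{D_1^{D_1/2}}{(D_1+1)^{(D_1+1)/2}},
\]
so $c$ takes all values in $[-C, C]$. Since $D_1$ is odd, $p(-\hat x_1, x_2, \ldots, x_m) = -p(\hat x_1, x_2, \ldots, x_m)$, hence $p_{\max} = -p_{\min}$ and in particular $p_{\min} \leq 0$. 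Combining the two symmetries, the minimum of the product $c(t) \, p(\hat x_1, x_2, \ldots, x_m)$ is attained at $(t^*, \hat x_1^*, x_2^*, \ldots, x_m^*)$ with $c(t^*) = C$ and $p(\hat x_1^*, x_2^*, \ldots, x_m^*) = p_{\min}$, giving $(f \cdot p)_{\min} = C \cdot p_{\min}$. Rearranging yields the claimed identity $p_{\min} = (f \cdot p)_{\min}/C = \frac{(D_1+1)^{(D_1+1)/2}}{D_1^{D_1/2}}(f \cdot p)_{\min}$, and the second equality in the proposition is just the definition of $(f \cdot p)_{\min}$ written out explicitly. The only mildly delicate step is the scalar optimization for $C$, but it is entirely routine, so no substantive obstacle is anticipated.
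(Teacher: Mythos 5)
Your proof is correct and follows the same essential route as the paper's: both reduce the odd-degree case to a one-parameter scalar optimization against the auxiliary coordinate. The only difference is that the paper conditions on $v_2,\dots,v_m$ and cites Lemma B.2 of Doherty--Wehner for the single-factor scaling constant $\frac{D_1^{D_1/2}}{(D_1+1)^{(D_1+1)/2}}$, while you treat all the outer variables jointly and derive that constant from scratch by the calculus computation on $c(t)=t(1-t^2)^{D_1/2}$ together with the oddness of $p$ in $x_1$ (which gives $p_{\max}=-p_{\min}\geq 0$, hence $\min_{t,y}c(t)p(y)=-C\max_y\lvert p(y)\rvert = C\,p_{\min}$); this makes your argument self-contained but not substantively different.
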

%By symmetry, Proposition~\ref{prop:even_gen} shows that we can assume $D_1,\dots, D_m$ are all even, at the expense of adding up to $m$ new variables.

\begin{proof}[Proof of Proposition~\ref{prop:even_gen}]
For $v:=(v_2,\dots, v_m)$ fixed, define a polynomial $p_v \in S^{D_1}(\V_1^*)$ by
\ba
p_v(x_1)=p(x_1,v_2,\dots, v_m).
\ea
By~\cite[Lemma B.2]{doherty2012convergence}, we have
\ba
p_{\min}&=\min_{\substack{v_j \in \V_j \\ \norm{v_j}=1\\ 1 \neq j \in [m]}}\;\;\;\min_{\substack{x_1 \in \V_1 \\ \norm{x_1}=1}}   p_v(x)\\
&=\frac{(D_1+1)^{(D_1+1)/2}}{D_1^{D_1/2}} \min_{\substack{v_j \in \V_j \\ \norm{v_j}=1\\ 1 \neq j \in [m]}} \;\;\;\min_{\substack{x_1 \in \V_1' \\ \norm{x_1}=1}}   (f \cdot p_v)(x_1)\\
&=\frac{(D_1+1)^{(D_1+1)/2}}{D_1^{D_1/2}}  (f \cdot p)_{\min}.
\ea
This completes the proof.
\end{proof}

In the remainder of this section, we assume $D_1,\dots, D_m$ are even without loss of generality. Let $d_j=D_j/2$ for all $j \in [m]$, and let $\bd=(d_1,\dots, d_m)$.
\subsection{Reduction for tensor optimization}
In this section, we generalize the reduction proven in Theorem~\ref{thm:real_opt} to the setting of tensor optimization.
\begin{theorem}\label{thm:real_optbd}
Let $p \in S^{2d_1}(\V_1^*)\ootimes S^{2d_m}(\V_m^*)$, let $\V_i^{\complex}=\complex^{n_i}$ (the complexification of $\V_i$), and let $P=M(p)\in \Herm(S^d(\V_1^{\complex})\ootimes S^d(\V_m^{\complex}))$. Then
%\ba
%p_{\min}^{\complex}&= \min_{v_j \in \Sp(\complex^{n_j})} \tr((v_1v_1^*)^{\otimes d_1}\otimes \dots \otimes (v_m v_m^*)^{\otimes d_m} M(p)).
%\ea
\ba\label{eq:real_optbd}
P_{\min} \leq p_{\min} \leq \frac{P_{\min}}{\delta(\bd)},
\ea
where $\delta(\bd)=\delta(d_1)\cdots \delta(d_m)$, and $\delta(d_i)$ is defined in~\eqref{eq:delta}.
\end{theorem}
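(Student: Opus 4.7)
The plan is to prove Theorem~\ref{thm:real_optbd} by adapting the argument for Theorem~\ref{thm:real_opt} and applying its key ingredients tensor factor by tensor factor. The lower bound $P_{\min} \leq p_{\min}$ is immediate: any real unit $m$-tuple $(v_1,\dots,v_m)$ is also a complex unit $m$-tuple, and on such a tuple $P(\bfv,\bfv^\dg) = p(v_1,\dots,v_m)$, so the minimum over the complex product of spheres is at most the minimum over the real one.

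For the upper bound, I would fix a minimizer $\bfz=(z_1,\dots,z_m)$, write each $z_j = x_j + i y_j$ with $x_j,y_j \in \V_j$, and express
\ba
P_{\min} = \tr\!\left[P \cdot \bigotimes_{j=1}^m (z_j z_j^\dg)^{\otimes d_j}\right].
\ea
The crucial step is to apply the partial-transpose averaging of the proof of Theorem~\ref{thm:real_opt} in each of the $m$ tensor groups. Because $M(p)$ is maximally symmetric within each block of $d_j$ copies of $\V_j^{\complex}$, it is invariant under partial transposition on any factor of that block. Averaging $P$ with $P^{\t_{1,j}}$ in each factor of each block, and using $z_j z_j^\dg + (z_j z_j^\dg)^\t = 2(x_j x_j^\t + y_j y_j^\t)$, rewrites
\ba
P_{\min} = \tr\!\left[P \cdot \bigotimes_{j=1}^m (x_j x_j^\t + y_j y_j^\t)^{\otimes d_j}\right] = \tr\!\left[P \cdot \bigotimes_{j=1}^m Q_j\right],
\ea
where $Q_j = M\bigl(((x_j^\t)^2 + (y_j^\t)^2)^{d_j}\bigr) \in \Sym(S^{d_j}(\V_j))$, since $P$ is supported on $\bigotimes_j S^{d_j}(\V_j)$ and each $(x_j x_j^\t + y_j y_j^\t)^{\otimes d_j}$ symmetrizes to $Q_j$.

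Next I would apply Reznick's decomposition \cite[Corollary 5.6]{reznick2013length} independently to each $Q_j$: setting $\delta_j = \tr(Q_j)$, there exist real unit vectors $v_{j,1},\dots,v_{j,d_j+1} \in \V_j$ with $\frac{1}{\delta_j} Q_j \in \conv\{(v_{j,i} v_{j,i}^\t)^{\otimes d_j} : i\}$. Taking the tensor product of these $m$ convex decompositions — which distributes over convex hulls — yields
\ba
\frac{1}{\delta_1 \cdots \delta_m}\, Q_1 \otimes \cdots \otimes Q_m \ \in\ \conv\!\left\{\bigotimes_{j=1}^m (v_j v_j^\t)^{\otimes d_j} : v_j \in \V_j,\ \norm{v_j}=1\right\}.
\ea
Pairing both sides with $P$ (and noting $\tr[P\cdot \bigotimes_j (v_j v_j^\t)^{\otimes d_j}] = p(v_1,\dots,v_m) \geq p_{\min}$) shows $p_{\min} \leq P_{\min}/(\delta_1 \cdots \delta_m)$. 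Finally, Lemma~\ref{lemma:technical} applied to each factor gives $\delta_j \geq \delta(d_j)$, so $\delta_1 \cdots \delta_m \geq \delta(\bd)$; when $P_{\min} \geq 0$ this immediately yields $p_{\min} \leq P_{\min}/\delta(\bd)$, and when $P_{\min} < 0$ one combines $p_{\min} \leq P_{\min}/(\delta_1\cdots\delta_m) \leq P_{\min}$ with the trivial lower bound $p_{\min} \geq P_{\min}$ to conclude $p_{\min} = P_{\min}$, which also satisfies the claimed inequality since $\delta(\bd) \leq 1$.

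The main obstacle I anticipate is bookkeeping rather than mathematics: one must carefully verify that the partial-transpose averaging can be performed independently within each of the $m$ blocks (using the block-wise maximal symmetry of $M(p)$ inherited from the inclusion $S^{2d_1}(\V_1^*) \ootimes S^{2d_m}(\V_m^*) \injects \End(S^{d_1}(\V_1) \ootimes S^{d_m}(\V_m))$), and that the tensor product of Reznick convex decompositions correctly yields the constant $\prod_j \delta(d_j)$. The rest amounts to reusing the single-factor argument $m$ times in parallel.
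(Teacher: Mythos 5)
Your proof follows essentially the same route as the paper: average $P$ against its partial transposes within each of the $m$ blocks to exchange $(z_jz_j^\dg)^{\otimes d_j}$ for $(x_jx_j^\t + y_jy_j^\t)^{\otimes d_j}$, identify the resulting operator with the tensor product $\bigotimes_j M\bigl(((x_j^\t)^2+(y_j^\t)^2)^{d_j}\bigr)$, invoke Reznick's decomposition in each factor to write $\frac{1}{\delta_j}Q_j$ as a convex combination of rank-one $d_j$-th powers, take the tensor product of these convex decompositions, and finally pair against $P$. That is exactly what the paper does, and the main body of your argument is fine.

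The place where you go wrong is the extra case analysis for $P_{\min}<0$. You correctly observe that the Reznick step gives $p_{\min}\le P_{\min}/\prod_j\delta_j$ with $\prod_j\delta_j\ge\delta(\bd)$, which does not immediately yield $p_{\min}\le P_{\min}/\delta(\bd)$ when $P_{\min}<0$ because division of a negative number by a smaller positive number moves the bound the \emph{wrong} way. But your proposed fix is also wrong: from $p_{\min}=P_{\min}<0$ and $\delta(\bd)\le 1$ you conclude that the target inequality holds, whereas in fact one then has $P_{\min}/\delta(\bd)\le P_{\min}=p_{\min}$, which is the \emph{opposite} of $p_{\min}\le P_{\min}/\delta(\bd)$. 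Concretely, take $m=1$, $\V=\real^2$, $p(x)=-\norm{x}^4$: then $p_{\min}=-1$ and $P=-N^{(2)}$ has $P_{\min}=-\max_{\norm z=1}N^{(2)}(z,z^\dg)=-1$, while $\delta(2)=2/3$, so $P_{\min}/\delta(2)=-3/2<p_{\min}$ and the claimed upper bound fails. So the $P_{\min}<0$ case cannot be salvaged by the observation $\delta(\bd)\le 1$; the inequality $p_{\min}\le P_{\min}/\delta(\bd)$ only holds (and only matters) when $P_{\min}\ge 0$. The paper's own proof tacitly relies on $P_{\min}\ge 0$ in the same place and does not make the case split explicit; the downstream use of this theorem in Theorem~\ref{thm:convergencegen} applies it to a form whose Gram operator has nonnegative minimum by construction, so the gap is harmless there. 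If you want to keep the $P_{\min}<0$ case in your write-up, the honest statement of what you have proven is the intermediate bound $p_{\min}\le P_{\min}/\prod_j\delta_j$ (which implies $p_{\min}>0\iff P_{\min}>0$), and you should drop the claim that the stated inequality continues to hold when $P_{\min}<0$.
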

\begin{proof}
Let $z_i \in \V_i^{\complex}$ be unit vectors for which
\ba
P_{\min}=P(\bfz,\bfz^\dg),
\ea
where $\bfz=(z_1,\dots, z_m)$. By definition,
\ba
P(\bfz,\bfz^\dg)= \tr((z_1z_1^\dg)^{\otimes d_1}\otimes \dots \otimes (z_m z_m^\dg)^{\otimes d_m} P).
\ea
Let $x_j,y_j \in \V$ be defined by $x_j + i y_j = z_j$ for each $j \in [m]$. Note that $P$ is invariant under partial transposition along any of the $d_1$ factors of $\V_1^{\complex}$. In particular, $P=\frac{1}{2}(P+P^{\t_1})$, where $(\cdot)^{\t_1}$ denotes the partial transpose on the first factor of $\V_1^{\complex}$. Thus,
\ba
&\tr((z_1z_1^\dg)^{\otimes d_1}\otimes \dots \otimes (z_mz_m^\dg)^{\otimes d_m} P)\\
&=\frac{1}{2} \tr((z_1z_1^\dg)^{\otimes d_1}\otimes \dots \otimes (z_m z_m^\dg)^{\otimes d_m} P)+ \frac{1}{2} \tr((z_1z_1^\dg)^{\otimes d_1}\otimes \dots \otimes (z_mz_m^\dg)^{\otimes d_m}P^{\t_1})\\
					&=\frac{1}{2} \tr((z_1z_1^\dg)^{\otimes d_1}\otimes \dots \otimes (z_mz_m^\dg)^{\otimes d_m}P)+ \frac{1}{2} \tr((z_1z_1^\dg)^{\t} \otimes (z_1z_1^\dg)^{\otimes d_1-1}\otimes \dots \otimes (z_mz_m^\dg)^{\otimes d_m}P)\\
					&=\tr\bigg[(x_1x_1^\t+y_1y_1^\t)\otimes (z_1z_1^\dg)^{\otimes d_1-1}\otimes \dots \otimes (z_mz_m^\dg)^{\otimes d_m} M(p)\bigg].
\ea
Continuing in this way for the other factors, we obtain
\ba
\tr((z_1z_1^\dg)&^{\otimes d_1}\otimes \dots \otimes (z_mz_m^\dg)^{\otimes d_m} \;\;P)\\
&=\tr((x_1x_1^\t+y_1y_1^\t)^{\otimes d_1}\otimes \dots \otimes (x_m x_m^\t + y_m y_m^\t)^{\otimes d_m} \;\;P)\\
&=\tr\bigg[M\bigg(((x_1^{\t})^2+(y_1^{\t})^2)^{d_1} \ootimes ((x_m^{\t})^2+(y_m^{\t})^2)^{d_m}\bigg) \;\;P\bigg]\\
&=\tr\bigg[M\bigg(((x_1^{\t})^{2}+(y_1^{\t})^{2})^{d_1}\bigg)\ootimes M\bigg(((x_m^{\t})^{2}+(y_m^{\t})^{2})^{d_m}\bigg) \;\;P\bigg].
%&=p((x_1^2+y_1^2)^{d_1}\ootimes (x_m^2+y_m^2)^{d_m}).
%&=\ip{p}{\Pi_{n_1,2d_1}((u_1 \otimes u_1+w_1\otimes w_1)^{\otimes d_1})\otimes \dots \otimes \Pi_{n_m,2d_m}((u_m \otimes u_m+w_m \otimes w_m)^{\otimes d_m})},
\ea
%Note that $\tr(u_ju_j^\t+w_jw_j^\t)=\braket{v_j}{v_j}=1$.
For each $j \in [m]$, let
\ba
\delta^{(j)}=\tr\bigg[M\bigg(((x_j^{\t})^{2}+(y_j^{\t})^{2})^{d_j}\bigg)\bigg].
\ea
By~\cite[Corollary 5.6]{reznick2013length} for each $j\in [m]$ there exist real unit vectors ${v_{j,1}},\dots, {v_{j,d_j+1}}\in \V_j$ for which
\ba
\frac{1}{\delta^{(j)}} M\bigg(((x_j^{\t})^{2}+(y_j^{\t})^{2})^{d_j}\bigg) \in \setft{conv} \{(v_{j,1}v_{j,1}^\t)^{\otimes d_1},\dots, (v_{m,d_m+1}v_{m,d_m+1}^\t)^{\otimes d_m}\}.
\ea
Taking the tensor product over all $j \in [m]$, multiplying by $P$ and taking the trace, this gives
\ba
\frac{P_{\min}}{\delta^{(1)}\cdots \delta^{(m)}} \in \setft{conv} \{ \tr((v_{1,i_1}v_{1,i_1}^\t)^{\otimes d_1} \otimes \dots \otimes (v_{m,i_m}v_{m,i_m}^\t)^{\otimes d_m}\;\;P) : i_j \in [d_j+1]\}.
\ea
Thus, there exist $i_1 \in [d_1+1],\dots, i_m \in [d_m+1]$ for which
\ba
\tr((v_{1,i_1}v_{1,i_1}^\t)^{\otimes d_1} \otimes \dots \otimes (v_{m,i_m}v_{m,i_m}^\t)^{\otimes d_m}\;\;P) \leq \frac{P_{\min}}{\delta^{(1)}\cdots \delta^{(m)}}.
\ea
Note that $\delta^{(j)} \geq \delta({d_j})$ for each $j \in [m]$, by the proof of Theorem~\ref{thm:real_opt}. This completes the proof.
\end{proof}

\subsection{m-HRSOS hierarchy}
For $\bfd=(d_1,\dots,d_m)$, let
\ba
N^{(\bfd)}=M(\norm{x_1}^{2d_1} \ootimes \norm{x_m}^{2d_m})\in \Herm(S^{d_1}(\V_1^{\complex})\ootimes S^{d_m}(\V_m^{\complex})).
\ea
Recall that for a Hermitian operator $H \in \Herm(S^{d_1}(\V_1^{\complex})\ootimes S^{d_m}(\V_m^{\complex}))$ and an integer $k \geq d:=\max_j d_j$ we define
\ba
H_k(\bfz,\bfz^\dg)= H(\bfz,\bfz^\dg) \cdot (\norm{z_1}^{2(k-d_1)}\ootimes \norm{z_m}^{2(k-d_m)})\in \Herm(S^k(\V_1^{\complex})\ootimes S^k(\V_m^{\complex})).
\ea

\begin{theorem}\label{thm:convergencegen}
Let $p \in S^{2d_1}(\V_1^*)\ootimes S^{2d_m}(\V_m^*)$, and let $P=M(p)$. Furthermore, let
\ba
\eta_k=\lambda_{\min}((N_k^{(\bfd)})^{-1/2}P_k (N_k^{(\bfd)})^{-1/2}).
\ea
Then $\eta_d \leq \eta_{d+1} \leq \dots$ and $\lim_k \eta_k = \pmin$. Furthermore,
\ba
p_{\min}-\eta_k \leq \norm{P}_{\infty}\left(1+\kappa(N^{(\bfd)})\right) \frac{4 |\bfd| (\max_j n_j-1)}{\delta(\bfd)(k+1)} = O\left(\frac{1}{k}\right),
\ea
where $|\bfd|=d_1+\dots+d_m$.
\end{theorem}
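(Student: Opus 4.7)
My plan is to mimic the proof of Theorem~\ref{thm:convergence} step by step, substituting the multi-homogeneous analogues at each point: the m-HSOS hierarchy (Theorem~\ref{thm:mhsos}) in place of HSOS, and the multi-reduction (Theorem~\ref{thm:real_optbd}) in place of Theorem~\ref{thm:real_opt}. First I would rewrite $\eta_k$ as a generalized eigenvalue, namely
\[
\eta_k = \max\{\eta : P_k - \eta\, N^{(\bd)}_k \succeq 0\} = \max\{\eta : M(p - \eta\cdot \norm{x_1}^{2d_1}\ootimes \norm{x_m}^{2d_m})_k \succeq 0\},
\]
using that $P_k$ and $N^{(\bd)}_k$ are obtained from $P$ and $N^{(\bd)}$ by the same $\Pi_k(\cdot\otimes\I^{\otimes\cdot})\Pi_k$-procedure, together with linearity of $M(\cdot)$. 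Then set $q^{(k)}(x) = p(x) - \eta_k(\norm{x_1}^{2d_1}\ootimes \norm{x_m}^{2d_m})$ and $Q^{(k)} = M(q^{(k)})$, so that $Q^{(k)}_k \succeq 0$ and $\lambda_{\min}(Q^{(k)}_k) = 0$.

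The inequality $\eta_k \leq p_{\min}$ then follows because $\ip{x_1^{\otimes k}\ootimes x_m^{\otimes k}}{Q^{(k)}_k\, x_1^{\otimes k}\ootimes x_m^{\otimes k}} = q^{(k)}(x_1,\dots,x_m)(\norm{x_1}^2)^{k-d_1}\cdots$ evaluates to $p(\bfx) - \eta_k$ on unit vectors. Monotonicity $\eta_k \leq \eta_{k+1}$ follows from the m-HSOS identity $P_{k+1} = (\Pi_{d_1+1}\ootimes \Pi_{d_m+1})(P \otimes \I_{\V_1^\complex}^{\otimes k+1-d_1} \ootimes \I_{\V_m^\complex}^{\otimes k+1-d_m})(\Pi_{d_1+1}\ootimes\Pi_{d_m+1})$ combined with partial trace in one factor along one system (e.g. $\tr_{[1],1}$ on a single tensor slot) and the convexity argument used in the proof of Theorem~\ref{thm:convergence}; the same works for $N^{(\bd)}_k$.

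For the effective bound, I would first apply the m-HSOS bound (Theorem~\ref{thm:mhsos}) to $Q^{(k)}$, giving
\[
Q^{(k)}_{\min} - 0 = Q^{(k)}_{\min} - \lambda_{\min}(Q^{(k)}_k) \leq \norm{Q^{(k)}}_{\infty}\,\frac{4|\bd|(\max_j n_j - 1)}{k+1}.
\]
Next I would control $\norm{Q^{(k)}}_\infty$ by the triangle inequality $\norm{Q^{(k)}}_\infty \leq \norm{P}_\infty + |\eta_k|\,\norm{N^{(\bd)}}_\infty$, and bound $|\eta_k| \leq \norm{P}_\infty \lambda_{\min}(N^{(\bd)})^{-1}$ by taking $\eta = -\norm{P}_\infty \lambda_{\min}(N^{(\bd)})^{-1}$ as a feasible candidate for the max defining $\eta_k$ (since $P + \norm{P}_\infty \lambda_{\min}(N^{(\bd)})^{-1} N^{(\bd)} \succeq 0$, and the property is preserved under $(\cdot)_k$). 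Using $\norm{N^{(\bd)}}_\infty = \lambda_{\max}(N^{(\bd)})$ (which holds because $N^{(\bd)} = M(\norm{x_1}^{2d_1})\otimes\cdots\otimes M(\norm{x_m}^{2d_m})$ is positive definite by Proposition~\ref{prop:Mspos} and the fact that a tensor product of positive definite operators is positive definite), this yields $\norm{Q^{(k)}}_\infty \leq \norm{P}_\infty(1 + \kappa(N^{(\bd)}))$.

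Finally, applying Theorem~\ref{thm:real_optbd} to $q^{(k)}$ gives $p_{\min} - \eta_k = q^{(k)}_{\min} \leq Q^{(k)}_{\min}/\delta(\bd)$, and combining everything yields the claimed bound. The main obstacle I anticipate is verifying the tensor-product factorization $N^{(\bd)} = \bigotimes_j M(\norm{x_j}^{2d_j})$ cleanly, since one must check that the maximally symmetric Gram operator of a product of forms on disjoint variables factors as the tensor product of Gram operators --- this follows from the definition of $M(\cdot)$ via the multi-homogeneous inclusion and the compatibility of symmetrization on each factor separately, but care is needed to make this rigorous when arguing about $\lambda_{\min}$ and $\lambda_{\max}$. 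Everything else is a direct transcription of the single-spectrum proof with the multi-index $\bd$ in place of $d$.
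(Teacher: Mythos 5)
Your proposal matches the paper's proof essentially verbatim: the paper likewise rewrites $\eta_k$ as a generalized eigenvalue, introduces $q^{(k)} = p - \eta_k(\norm{x_1}^{2d_1}\ootimes\norm{x_m}^{2d_m})$ and $Q^{(k)}=M(q^{(k)})$, applies Theorem~\ref{thm:mhsos} to $Q^{(k)}$ with $\lambda_{\min}(Q^{(k)}_k)=0$, bounds $\norm{Q^{(k)}}_\infty$ via the triangle inequality and $\abs{\eta_k}\leq\norm{P}_\infty\lambda_{\min}(N^{(\bfd)})^{-1}$, and invokes Theorem~\ref{thm:real_optbd} for the final reduction. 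The tensor-product factorization $N^{(\bfd)}=N^{(d_1)}\otimes\cdots\otimes N^{(d_m)}$ that you flag as the potential obstacle is indeed what gives positive-definiteness via Proposition~\ref{prop:Mspos}; the paper uses it implicitly without comment, so your instinct to spell it out is sound and completes the argument.
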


Numerics indicate that
\ba
\kappa(N^{(\bfd)})=\binom{n_1/2+d_1-1}{\floor{d_1/2}}\cdots \binom{n_m/2+d_m-1}{\floor{d_m/2}},
\ea
where the binomial function is extended to non-integer inputs via the Gamma function. However we do not have a proof of this fact.

Now we can prove Theorem~\ref{thm:convergencegen}.

\begin{proof}[Proof of Theorem~\ref{thm:convergencegen}]
By a similar argument as in Theorem~\ref{thm:convergence}, we have
\ba
\eta_k= \max\{ \eta: P_k -\eta N^{(\bfd)}_k \succeq 0\}.
\ea
Let
\ba
q^{(k)}(\bfx) = p(\bfx)-\eta_k (\norm{x_1}^{2d_1}\ootimes \norm{x_m}^{2d_m}) \in S^{2d_1}(\V_1^*)\ootimes S^{2d_m}(\V_m^*),
\ea
and let $Q^{(k)}=M(q^{(k)}).$ For the inequality $\eta_k \leq p_{\min}$, note that since $Q^{(k)}_k \succeq 0$, we have
\ba
p(\bfx)-\eta_k&=Q_k^{(k)}(\bfx,\bfx^\t) \geq 0
\ea
for all $\bfx=(x_1,\dots, x_m)$ with $x_i \in \V_i$, so $p_{\min} - \eta_k \geq 0$. For the inequality $\eta_{k+1} \geq \eta_k$, let $\psi$ be a unit eigenvector for $P_{k+1}$ with eigenvalue $\eta_{k+1}$. Then
\ba
\eta_{k+1}=\tr(P_{k+1} \psi \psi^\dg) = \tr(P_k \tr_{1,\dots,1}(\psi \psi^\dg))\geq \eta_k,
\ea
where $\tr_{1,\dots, 1}$ denotes the partial trace over one copy of $\V_j^{\complex}$ for each $j \in [m]$, the second equality follows from $\tr_{1,\dots,1}(P_{k+1})=P_k$, and the inequality follows by convexity.

%by Proposition~\ref{prop:real_opt} we have $p_{\min}=s_{n,d}^{-1} \tr((uu^*)^{\otimes d} M(p))$ for some $u \in \Sp(\complex^n)$. It follows that
%\ba
%\nu_k &\leq s_d^{-1} \tr((uu^*)^{\otimes k}  M_k(p))\\
%&=s_d^{-1} \tr((uu^*)^{\otimes k} (M(p) \otimes \I_n^{\otimes k-d}))\\
%&=s_d^{-1} \tr((uu^*)^{\otimes d} M(p))\\
%&=p_{\min}.
%\ea
For the bound, note that
\ba\label{eq:qmingen}
Q_{\min}^{(k)} &\leq \norm{Q^{(k)}}_{\infty} \frac{4d (\max_j n_j-1)}{k+1}\\
		&\leq \norm{P}_{\infty} \left(1+\kappa(N^{(\bfd)})\right) \frac{4d(\max_j n_j-1)}{k+1}.
\ea
The first line follows from Theorem~\ref{thm:mhsos} and the fact that $\lambda_{\min}(Q_k^{(k)}) = 0$. The second line follows from
\ba
\norm{Q^{(k)}}_{\infty}&\leq \norm{P}_{\infty}+ \abs{\eta_k} \norm{N^{(\bfd)}}_{\infty} \\
			&\leq \norm{P}_{\infty}\left(1+\frac{\lambda_{\max}(N^{(\bfd)})}{\lambda_{\min}(N^{(\bfd)})}\right)
\ea
Here, the first line is the triangle inequality. The second line follows from $\norm{N^{(\bfd)}}_{\infty}=\lambda_{\max}(N^{(\bfd)})$ by Proposition~\ref{prop:Mspos}, and $\abs{\eta_k}\leq \norm{P}_{\infty} \;\lambda_{\min}(N^{(\bfd)})^{-1}$ since choosing $\eta$ equal to minus the righthand side would guarantee $M(p-\eta \; \norm{x}^{2d}) \succeq 0$.

It follows from Theorem~\ref{thm:real_optbd} and~\eqref{eq:qmingen} that
\ba
p_{\min}-\eta_k &= q_{\min}^{(k)}\\
 &\leq \frac{Q_{\min}^{(k)}}{\delta(\bfd)} \\
% &\leq \norm{M(p)}_{\infty}\left(1+\frac{\lambda_{\max}(M(s_{n,d}))}{\lambda_{\min}(M(s_{n,d}))}\right) \frac{4d(n-1)}{c_d (k+1)}\\
 &=\norm{P}_{\infty}\left(1+\kappa(N^{(\bfd)})\right) \frac{4d(\max_jn_j-1)}{\delta(\bfd) (k+1)}.
\ea
This completes the proof.
\end{proof}

\subsection{Examples: Biquadratic forms and the tensor spectral norm}\label{sec:examples}

In this section we use our tensor optimization hierarchy and Theorem~\ref{thm:convergencegen} to give hierarchies of eigencomputations for two tasks: minimizing a biquadratic form over the unit sphere, and computing the real spectral norm of a real tensor.
\begin{example}[Biquadratic forms]
Let $\V_1=\real^{n_1}, \V_2=\real^{n_2}$ and $\V_1^{\complex}=\complex^{n_1}, \V_2^{\complex}=\complex^{n_2}$ (the complexifications of $\V_1, \V_2$). A \textit{biquadratic form} is an element $p\in S^2(\V_1^*) \otimes S^2(\V_2^*)$. Let $P=M(p)$ be the maximally symmetric Gram operator of $p$ (see Section~\ref{background}), and let
\ba
P_k(\bfz,\bfz^\dg)=P(\bfz,\bfz^\dg) \cdot (\norm{z_1}^{2(k-2)} \otimes \norm{z_2}^{2(k-2)})\in \Herm(S^2(\V_1^{\complex}) \otimes S^2(\V_2^{\complex}))
\ea
for $k \geq 2$ and $\bfz=(z_1,z_2)$ with $z_i \in \V_i^{\complex}$. Since $P_k$ is real-valued, we can regard it as the real symmetric operator
\ba
P_k=(\Pi_{\V_1,k} \otimes \Pi_{\V_2, k}) (P \otimes \I_{\V_1}^{\otimes k-2}\otimes \I_{\V_2}^{\otimes k-2})(\Pi_{\V_1,k} \otimes \Pi_{\V_2,k}) \in \Sym(S^k(\V_1)\otimes S^k(\V_2)),
\ea
where $\Pi_{\V_i,k}$ is the projection onto $S^k(\V_i)$. Let $\eta_k:=\lambda_{\min}(P_k)$. By Theorem~\ref{thm:convergencegen}, the $\eta_k$ converge to $p_{\min}$ from below at a rate of $O(1/k)$.
%\ba
%M_k^{(\bn,(1,1))}(q)M_k^{(\bn,(1,1))}(s_{\bn,(1,1)})^{-1}=M_k.
%\ea

In particular, suppose we wish to determine whether $p(x,y)>0$ for all $x,y \neq 0$. Our hierarchy shows that this holds if and only if $P_k$ is positive definite for some $k$.

%Note that
%\ba
%\ip{x^{\otimes k} \otimes y^{\otimes k}}{M_k \;\; x^{\otimes k} \otimes y^{\otimes k}}=q(x,y) \cdot s(x)^k \cdot s(y)^k,
%\ea
%so $M_k$ is a \textit{Gram matrix} for $q$.\footnote{Here we define the \textit{Gram matrix} of a bihomogeneous form $r(x,y) \in \real[x,y]_{2k}$ as a matrix $M\in \Hom(S^k(\real^{n_1})\otimes S^k(\real^{n_2}))$ for which $r(x,y)=\ip{x^{\otimes k} \otimes y^{\otimes k}}{M \; x^{\otimes k}\otimes y^{\otimes k}}$.} In particular, a biquadratic form is strictly positive on non-zero inputs if and only if $q(x,y) \cdot s(x)^k \cdot s(y)^k$ admits a positive definite Gram matrix for some $k$.
%\ba
%\min_{x,y \in \Sp(\real^n)} q(x,y) = \lim_{k \rightarrow \infty} \nu_k,
%\ea
%and furthermore te
\end{example}

\begin{example}[Tensor spectral norm]
Let $\V_i=\real^{n_i}$ for $i=1,\dots, m$ and $\V_i^{\complex}=\complex^{n_i}$ (the complexification of $\V_i$), and let $p \in \V_1 \ootimes \V_m$. Assume without loss of generality that $\norm{p}_2=1$. The \textit{(real) spectral norm} of $p$ is defined as
\ba
\norm{p}_{\sigma, \real} := \max_{\substack{v_j \in \V_j \\ \norm{v_j}=1}} \abs{\ip{p}{v_1\otimes \dots \otimes v_m}}.
\ea
We can use our hierarchy to compute the spectral norm of $p$ as follows. First note that
\ba
- \norm{p}_{\sigma, \real} = \min_{\substack{v_j \in \V_j \\ \norm{v_j}=1}} \ip{p}{v_1 \otimes \dots \otimes v_m},
\ea
so it suffices to compute the righthand side of this expression. For each $j$ let $\U_j=\V_j \oplus \real^1$, let $u_j=(\vec{0},1) \in \U_j$, and let
\ba
r = p \cdot (u_1 \ootimes u_m) \in S^2(\U_1)\ootimes S^2(\U_m).
\ea
This product is defined in Section~\ref{background}, and we give more details here by way of example. Let
\ba
q= p \otimes u_1 \ootimes u_m \in \bigg(\bigotimes_{j=1}^m \U_j \bigg)^{\otimes 2}.
\ea
Then
\ba
r= (\Pi_{\U_1,k}\otimes \dots \otimes \Pi_{\U_m,k}) (\sigma \cdot q) \in S^2(\U_1)\ootimes S^2(\U_m),
\ea
where $\sigma\in \frakS_{2m}$ is the permutation that sends $(1,2,\dots, 2m)$ to $(1,m+1,2,m+2,\dots,m,2m)$. Direct calculation shows that $\norm{r}_2 = 2^{-m/2}$. By Proposition~\ref{prop:even_gen} it holds that
\ba\label{eq:tensor}
2^{-m} \;\cdot \;\min_{\substack{v_j \in \V_j \\ \norm{v_j}=1}} \ip{p}{v_1 \otimes \dots \otimes v_m}=  \min_{\substack{v_j \in \U_j \\ \norm{v_j}=1}} \ip{r}{v_1^{\otimes 2} \otimes \dots \otimes v_m^{\otimes 2}}.
\ea

Our hierarchy can be used to compute the righthand side of~\eqref{eq:tensor}, as follows. Let
\ba
R=M(r^{\t}) \in \Herm(\U_1^{\complex} \ootimes \U_m^{\complex})
\ea
be the maximally symmetric Gram operator of $r^{\t}$ (see Section~\ref{background}). %(where, due to symmetry of $r$, $M(r)$ is invariant under partial transposition of any of the $m$ factors).
For each $k \in \natural$, let
\ba
R_k(\bfz ,\bfz^\dg)= R(\bfz,\bfz^\dg) \cdot (\norm{z_1}^{2(k-1)} \ootimes \norm{z_m}^{2(k-1)}) \in \Herm(S^k(\U_1^{\complex}) \ootimes S^k(\U_m^{\complex})),
\ea
where $\bfz=(z_1,\dots, z_m)$ and $z_i \in \U_i$. Since $R_k$ is real-valued, we can regard it as the real symmetric operator
\ba
R_k=(\Pi_{\V_1,k}\otimes \dots \otimes \Pi_{\V_m,k})(R \otimes \I_{\V_1}^{\otimes k-1}\otimes  \dots \otimes \I_{\V_m}^{\otimes k-1})(\Pi_{\V_1,k}\otimes &\dots \otimes \Pi_{\V_m,k})\\
&\in \Sym(S^k(\U_1) \ootimes S^k(\U_m)).
\ea
Then by our Theorem~\ref{thm:convergencegen}, the minimum eigenvalues $\eta_k := \lambda_{\min}(R_k)$ converge to the righthand side of~\eqref{eq:tensor} from below at a rate of $O(1/k)$. More precisely, we have the following convergence guarantee:

\begin{theorem}[Hierarchy for tensor spectral norm]
For each $k \in \natural$ let $\mu_k=-2^m \lambda_{\min}(R_k)$. Then $\mu_1 \geq \mu_2 \geq \dots$ and $\lim_k \mu_k = \norm{p}_{\sigma, \real}$. Moreover,
\ba\label{eq:spectral}
\mu_k - \norm{p}_{\sigma, \real} %&\leq %2^m \cdot \norm{R}_{\infty} \left(1+\kappa(M(S^{(1,\dots,1)}))\right)\frac{4 m (\max_j n_j-1)}{\delta(1,\dots,1)(k+1)}\\
&\leq \frac{2^{m/2+3} m (\max_j n_j-1)}{k+1}= O\left(\frac{1}{k}\right).
\ea
\end{theorem}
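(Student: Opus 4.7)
The strategy is to recognize the claimed inequality as a direct consequence of Theorem~\ref{thm:convergencegen} (the m-HRSOS hierarchy) applied to the auxiliary multi-homogeneous form $r^\t$, after translating the tensor spectral norm into a minimization over a product of spheres using the identity~\eqref{eq:tensor}. Specifically, the equality $-\norm{p}_{\sigma,\real} = \min_{v_j \in \V_j,\,\norm{v_j}=1} \ip{p}{v_1 \ootimes v_m}$ combined with~\eqref{eq:tensor} gives
\ba
-\norm{p}_{\sigma,\real} = 2^m \cdot r^\t_{\min},
\ea
where $r^\t_{\min}$ denotes the minimum of $r^\t(v_1,\dots,v_m) = \ip{r}{v_1^{\otimes 2}\ootimes v_m^{\otimes 2}}$ over unit vectors $v_j \in \U_j$. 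Hence $\mu_k = -2^m \lambda_{\min}(R_k) = -2^m \eta_k$ where $\eta_k$ is exactly the hierarchy from Theorem~\ref{thm:convergencegen} applied to $r^\t \in S^2(\U_1^*)\ootimes S^2(\U_m^*)$ (multidegree $\bd = (1,\dots,1)$).

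The second step is to evaluate the parameters appearing in Theorem~\ref{thm:convergencegen} for this specific case. Since $d_j = 1$ for all $j$, we compute $\delta(1) = 2/\binom{2}{1} = 1$, so $\delta(\bd) = 1$. Moreover, $M(\norm{x_j}^2) = \I_{\U_j^\complex}$, so $N^{(\bd)} = \I$ and $\kappa(N^{(\bd)}) = 1$. Also $|\bd| = m$, and $\dim \U_j^\complex = n_j + 1$. Monotonicity $\mu_k \geq \mu_{k+1}$ and the convergence $\lim_k \mu_k = \norm{p}_{\sigma,\real}$ then follow immediately by applying the corresponding statements of Theorem~\ref{thm:convergencegen} to $\eta_k$ and multiplying by $-2^m$.

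For the explicit rate, the only quantity requiring estimation is $\norm{R}_{\infty}$. I would use the standard fact that the operator norm is bounded by the Frobenius norm, together with the identification of $M(r^\t)$ with the symmetric tensor $r^\t$ under the canonical isomorphism of~\eqref{eq:injects}, which preserves the $\ell_2/$Frobenius norm. This yields
\ba
\norm{R}_{\infty} \leq \norm{M(r^\t)}_{\HS} = \norm{r^\t}_{2} = \norm{r}_2 = 2^{-m/2},
\ea
using the $\norm{r}_2 = 2^{-m/2}$ computation already noted in the text. Plugging $\norm{R}_\infty \leq 2^{-m/2}$, $\kappa(N^{(\bd)})=1$, $\delta(\bd)=1$, $|\bd|=m$ into the bound of Theorem~\ref{thm:convergencegen} and multiplying by the $2^m$ factor gives
\ba
\mu_k - \norm{p}_{\sigma,\real} = 2^m(r^\t_{\min} - \eta_k) \leq 2^m \cdot 2^{-m/2} \cdot 2 \cdot \frac{4m(\max_j n_j)}{k+1} = \frac{2^{m/2+3}\, m\, \max_j n_j}{k+1},
\ea
which is the claimed rate (matching the stated bound up to a harmless off-by-one in the dimension factor coming from the dimension lift $\U_j = \V_j \oplus \real$).

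The proof is almost entirely bookkeeping---there is no serious obstacle---since the hard content has already been proved in Theorem~\ref{thm:convergencegen}. The one step requiring some care is verifying that the evaluation map $r^\t(v_1,\dots,v_m) = \ip{r}{v_1^{\otimes 2}\ootimes v_m^{\otimes 2}}$ arising from the dualization $r \mapsto r^\t$ is genuinely compatible with the conventions of the m-HRSOS hierarchy, so that $R = M(r^\t)$ is the correct Gram operator to apply the hierarchy to; this is straightforward from the definitions in Section~\ref{background} but is the one place where the reader must carefully track the identifications between $\V$, $\V^*$, and tensors.
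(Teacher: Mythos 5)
Your proof is essentially identical to the paper's: reduce to $\mu_k - \norm{p}_{\sigma,\real} = 2^m(r^\t_{\min}-\eta_k)$, apply Theorem~\ref{thm:convergencegen} with $\delta(1,\dots,1)=1$, $\kappa(N^{(\bfd)})=\kappa(\I_{\U_1}\ootimes\I_{\U_m})=1$, $|\bfd|=m$, and $\norm{M(r^\t)}_\infty\leq\norm{r}_2=2^{-m/2}$. The discrepancy you flag between the factor $\max_j n_j$ (what the plug-in actually yields, since $\dim\U_j = n_j+1$) and the paper's stated $\max_j n_j - 1$ is real: the paper appears to have a harmless off-by-one from not adjusting the dimension after lifting $\V_j$ to $\U_j=\V_j\oplus\real$, and your version of the bound is the one that correctly follows from Theorem~\ref{thm:convergencegen}.
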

\begin{proof}
Note that
\ba
\mu_k - \norm{p}_{\sigma, \real} &= -2^m \eta_k + \min_{\substack{v_j \in \V_j\\ \norm{v_j}=1}} \ip{p}{v_1 \otimes \dots \otimes v_m}\\
&=-2^m \eta_k + 2^m \min_{\substack{v_j \in \V_j\\ \norm{v_j}=1}} \ip{r}{v_1^{\otimes 2} \otimes \dots \otimes v_m^{\otimes 2}}\\
&= 2^{m}(r^{\t}_{\min}-\eta_k).
\ea
The bound follows from Theorem~\ref{thm:convergencegen}, $\delta(1,\dots, 1)=1$,
\ba
{\kappa(N^{(1,\dots,1)})=\kappa(\I_{\U_1}\ootimes \I_{\U_m})=1},
\ea
and $\norm{M(r^{\t})}_{\infty} \leq \norm{r}_2 = 2^{-m/2}$. This completes the proof.
\end{proof}

%
%
% It will be helpful to give names to each of the $2m$ copies of $\real^{n+m}$ in which $q$ lives. For each $i \in [2m]$, let $\V_i=\real^{n+m}$ be the $i$-th factor of $q$, so $q \in \V_1 \otimes \dots \otimes \V_{2m}$. Let $M(q)$ be $q$ viewed as an element
%
%
%
% Let $r= (\Pi_2)^{\otimes m} q$, where the $i$-th copy of $\Pi_2$ acts on $(\V_i \otimes \V_{m+i})$, and projects this space onto $S^2(\real^{n+m})$.
\end{example}

\section{Constrained polynomial optimization}\label{constraints}

Let $\V_{\real}=\real^n$, let $p \in S^{2d}(\Vr^*)$, and let $q_i\in S^{c_i}(\Vr^*)$, $i=1,\dots, \ell$ be forms. We consider the constrained polynomial optimization problem

\begin{align}\label{eq:constrained_optimization_problem}
p_{q,\min}=\min_{\substack{x \in \Vr \\ \norm{x}=1 \\ q_1(x)=\dots = q_{\ell}(x)=0}} p(x).
\end{align}
Analogously, for $\Vc=\complex^n$, a Hermitian form $H \in \Herm(S^d(\Vc))$, and forms $q_i \in S^{c_i}(\Vc^*)$, let
\ba
H_{q,\min}=\min_{\substack{z \in \Vc \\ \norm{z}=1 \\ q_1(z)=\dots = q_{\ell}(z)=0}} H(z,z^\dg).
\ea

For a finite dimensional Hilbert space $\V$ over $\field \in \{\real, \complex\}$, let ${S^{\bullet}(\V^*)=\bigoplus_{d=0}^{\infty} S^d(\V^*)}$ be the symmetric algebra. For forms $q_i \in S^{c_i}(\V^*)$, $i=1,\dots, \ell$, let
\ba
V_{\field}(q_1,\dots, q_\ell)=\{v : q_1(v)=\cdots = q_{\ell}(v)=0\}\subseteq \V
\ea
be the \textit{variety} they cut out, and let
\ba
I=\langle q_1,\dots, q_{\ell} \rangle:=\bigg\{\sum_{i=1}^{\ell} q_i f_i : f_i \in S^{\bullet}(\V^*) \bigg\} \subseteq S^{\bull}(\V^*)
\ea
be the ideal they generate. For each positive integer $k$, let $I_k \subseteq S^k(\V^*)$ be the degree-$k$ part of $I$, let $I_k^{\perp} \subseteq S^k(\V)$ be the orthogonal complement to $I$ with respect to the bilinear pairing $(\cdot,\cdot): \V^* \times \V \rightarrow \field$, and let $\Pi_{I,k} \in \End(S^k(\V))$ be the orthogonal projection onto $I_k^{\perp}$.

The following hierarchy of eigencomputations for $H_{q,\min}$ was recently obtained in~\cite[Theorem 7.2]{DJLV24}. See also~\cite{catlin1999isometric,d2009polynomial}.

\begin{theorem}\label{thm:chsos}
Let $H \in \Herm(S^d(\Vc))$ be a Hermitian form, let $q_i \in S^{c_i}(\Vc^*)$ be forms for $i=1,\dots,\ell$, and let ${I=\langle q_1,\dots, q_{\ell} \rangle}.$ For each $k \geq d$, let $H_k = H(z,z^\dg) \norm{z}^{2(k-d)}$ and let $\mu_k=\lambda_{\min}(\Pi_{I,k} H_k \Pi_{I,k})$. Then $\mu_d \leq \mu_{d+1} \leq \dots$ and $\lim_k \mu_k = H_{q, \min}$.
\end{theorem}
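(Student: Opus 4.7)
The plan is to establish three facts in order: (i) $\mu_k \leq H_{q,\min}$, (ii) $\mu_k \leq \mu_{k+1}$, and (iii) $\lim_k \mu_k \geq H_{q,\min}$, with the limit step invoking the quantum de Finetti theorem (Theorem~\ref{thm:definetti}) in close analogy with the proof of Theorem~\ref{thm:hsos}. For (i), the key observation is that for any unit vector $v \in V_\complex(q_1,\dots,q_\ell)$ and any $f \in I_k$ we have $(f,v^{\otimes k}) = f(v) = 0$, so $v^{\otimes k} \in I_k^\perp$ and $\Pi_{I,k} v^{\otimes k} = v^{\otimes k}$. The variational characterization of the minimum eigenvalue then gives $\mu_k \leq \ip{v^{\otimes k}}{H_k v^{\otimes k}} = H(v,v^\dg)$, and taking the infimum over $v$ yields the bound.

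For monotonicity (ii), the key observation is the ideal inclusion $\Vc^* \cdot I_k \subseteq I_{k+1}$. For any unit $\psi \in I_{k+1}^\perp$ and any $\phi \in I_k$ (regarded inside $S^k(\Vc)$ via the natural identification), the symmetry of $\psi$ combined with this inclusion yields $(\I_\Vc \otimes \phi^\dg)\psi = 0$ in $\Vc$. Consequently $\ip{\phi}{\tr_1(\psi\psi^\dg)\phi} = \norm{(\I \otimes \phi^\dg)\psi}^2 = 0$, and since $\tr_1(\psi\psi^\dg) \succeq 0$ this forces $\tr_1(\psi\psi^\dg)\phi = 0$ for every $\phi \in I_k$, so that $\tr_1(\psi\psi^\dg) = \Pi_{I,k}\tr_1(\psi\psi^\dg)\Pi_{I,k}$. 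Combining this with the identity $\tr(H_{k+1}\psi\psi^\dg) = \tr(H_k \tr_1(\psi\psi^\dg))$ and the variational definition of $\mu_k$ then gives $\mu_{k+1} \geq \mu_k$.

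For the convergence (iii), let $\psi_k \in I_k^\perp$ be a unit eigenvector with eigenvalue $\mu_k$, so $\mu_k = \tr(H \tr_{[d+1 \cdot \cdot k]}(\psi_k\psi_k^\dg))$. By Theorem~\ref{thm:definetti}, there exists a probability measure $\nu_k$ on the unit sphere of $\Vc$ such that $\tau_k := \int (uu^\dg)^{\otimes d}\, d\nu_k(u)$ satisfies $\norm{\tr_{[d+1 \cdot \cdot k]}(\psi_k\psi_k^\dg) - \tau_k}_1 = O(1/k)$; tracing against $H$ yields $\mu_k = \int H(u,u^\dg)\, d\nu_k(u) + O(1/k)$. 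The same contraction argument used in (ii), applied to each generator $q_j$ and using that $q_j \cdot g \in I_k$ for every $g \in S^{k-c_j}(\Vc^*)$ once $k \geq c_j$, shows that $(q_j \otimes \I^{\otimes k-c_j})\psi_k = 0$. Writing $|q_j(u)|^2$ as $\tr(Q_j (uu^\dg)^{\otimes c_j})$ for the positive operator $Q_j$ encoding the Hermitian square of $q_j$, a second application of the de Finetti estimate then gives $\int |q_j(u)|^2\, d\nu_k(u) = O(1/k) \to 0$.

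To conclude, I extract a weakly convergent subsequence $\nu_{k_i} \to \nu$ using compactness of the space of probability measures on the unit sphere. Since each $|q_j|^2$ is continuous, $\int |q_j|^2\, d\nu = \lim_i \int |q_j|^2\, d\nu_{k_i} = 0$, so $\nu$ is supported on $V_\complex(q_1,\dots,q_\ell)$; consequently $\int H(u,u^\dg)\, d\nu(u) \geq H_{q,\min}$. Combined with the de Finetti error estimate and the monotonicity of $\mu_k$ (so that the full limit equals any subsequential limit), this gives $\lim_k \mu_k = \lim_i \mu_{k_i} \geq H_{q,\min}$, matching the upper bound. The main subtlety of the argument is bookkeeping: the ideal lives in $S^\bull(\Vc^*)$ while the orthogonal complements and de Finetti operators live in $S^\bull(\Vc)$, and the proof relies on the duality $\ip{q_j \otimes g}{\psi_k} = (q_j \cdot g, \psi_k)$ together with the symmetry of $\psi_k$ to turn ideal membership into vanishing tensor contractions.
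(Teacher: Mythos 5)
The paper does not itself prove Theorem~\ref{thm:chsos}; it is quoted from \cite[Theorem~7.2]{DJLV24} (see also \cite{catlin1999isometric,d2009polynomial}), so there is no internal proof to compare your argument against. Judged on its own, your proof is sound and is a natural extension of the de Finetti strategy the paper uses for Theorems~\ref{thm:hsos} and~\ref{thm:convergence} to the constrained setting. Each step goes through as you indicate: $v^{\otimes k}\in I_k^\perp$ for $v\in V_{\complex}(q_1,\dots,q_\ell)$ gives $\mu_k\leq H_{q,\min}$; the inclusion $\Vc^*\cdot I_k\subseteq I_{k+1}$ together with the symmetry of $\psi$ does force $(\I_{\Vc}\otimes\phi^\dg)\psi=0$ for $\phi$ in the conjugated ideal, hence $\tr_1(\psi\psi^\dg)$ is supported on $I_k^\perp$, which yields monotonicity; and the identity $(q_j\otimes\I_{\Vc}^{\otimes k-c_j})\psi_k=0$ combined with the de Finetti bound forces any weak subsequential limit $\nu$ of the $\nu_k$ to be supported on the constraint variety, whence the limit---which exists by monotonicity and equals every subsequential limit---is at least $H_{q,\min}$.

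Two small points to tighten. First, to control $\int|q_j(u)|^2\,d\nu_k(u)$ for \emph{every} $j$ and $\int H(u,u^\dg)\,d\nu_k(u)$ simultaneously with a single measure $\nu_k$, you should invoke the de Finetti theorem once at level $\max(d,c_1,\dots,c_\ell)$ and recover the lower-order moments by partial trace, rather than reapplying it separately at each degree (which would produce distinct measures and break the weak-limit argument); this is easily fixed but is not automatic from the wording ``a second application.'' Second, note that $\mu_k$ should be read as the minimum eigenvalue of the restriction of $\Pi_{I,k}H_k\Pi_{I,k}$ to $I_k^\perp$ (otherwise the operator trivially has a large kernel), and that the compactness step implicitly assumes $V_{\complex}(q_1,\dots,q_\ell)$ meets the unit sphere, i.e.\ $I_k^\perp\neq 0$ for all $k$; otherwise both $\mu_k$ and $H_{q,\min}$ are over empty sets and a convention must be fixed.
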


In the unconstrained setting, we obtained our hierarchy of eigencomputations for $p_{\min}$ by relating it to $P_{\min}$ up to a positive constant, where $P=M(p) \in \Herm(S^d(\Vc))$ is the maximally symmetric Gram operator of $p$. In more details, we proved that $P_{\min} \leq p_{\min} \leq \frac{P_{\min}}{\delta(d)}$, where $\delta(d)$ is defined in~\eqref{eq:delta}; in particular,
\ba
\pmin=0 \iff \Pmin = 0.
\ea
It is natural to ask: Can we do the same in the constrained setting? If ($p_{q,\min}= 0 \iff P_{q,\min}=0$), then we can again get a hierarchy of eigencomputations for $p_{q,\min}$ using Theorem~\ref{thm:chsos} and similar arguments as in Theorem~\ref{thm:convergence}. Note that $P_{q,\min} \leq p_{q,\min}$ always holds, since $P(x,x^\t)=p(x)$ for any $x \in \Vr$. The following proposition shows that the equivalence ($p_{q,\min}= 0 \iff P_{q,\min}=0$) does not always hold.

%up to a positive constant, where we let $\Vc$ be the complexification of $\Vr$ and view the $q_i$ as forms on $\Vc$. Note that $P_{q,\min} \leq p_{q,\min}$, since $P(x,x^\t)=p(x)$ for any $x \in \Vr$.

%This leads us to ask the following question:

%\begin{question}
%For which choices of forms $q_i \in S^{c_i}(\Vr^*)$ does it hold that $p_{q,\min}=0 \iff P_{q,\min}=0$?
%\end{question}

%The following proposition gives an example for which the desired equivalence does not hold.

\begin{prop}\label{prop:constraints_counterexample}
Let $\V=\real^3$, and let
\ba\label{eq:example}
p(x,y,z)&=xz \in S^2(\V^*),\\
q(x,y,z)&=x^2+y^2-\frac{1}{\sqrt{3}} x z \in S^2(\V^*).
\ea
Then
\ba\label{eq:counterexample}
p_{q, \min} =0 > -\frac{1}{2\sqrt{3}} \geq   P_{q,\min}.
\ea
\end{prop}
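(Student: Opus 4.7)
The plan is to handle the two inequalities in~\eqref{eq:counterexample} by quite different methods: the real side is a short algebraic observation on the constraint, while the Hermitian side requires exhibiting an explicit complex witness.

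\textbf{Real side ($p_{q,\min} = 0$).} I would first rewrite the constraint $q(x,y,z)=0$ as $xz = \sqrt{3}(x^2 + y^2)$. For any real $(x,y,z)$ satisfying this, $p(x,y,z) = xz \geq 0$, so $p_{q,\min} \geq 0$. Equality is attained at the unit vector $(x,y,z) = (0,0,1)$, which trivially satisfies the constraint. Hence $p_{q,\min}=0$.

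\textbf{Hermitian side ($P_{q,\min} \leq -1/(2\sqrt{3})$).} First I would unwind the definitions. Since $p$ has degree $2d=2$, we have $P = M(p) \in \Herm(\Vc)$, and direct computation gives $P = \tfrac{1}{2}(e_1 e_3^{\t} + e_3 e_1^{\t})$ as a complex matrix, so that
\[
P(z, z^\dg) = \Re(\overline{z_1}\, z_3).
\]
The strategy is then to guess a witness of the form $z = (r,\, i\, t,\, -s)$ with $r,s,t \geq 0$. The real part becomes $-rs$, and the constraint $q(z)=0$ reads $r^2 - t^2 + \tfrac{rs}{\sqrt{3}} = 0$, i.e.\ $t^2 = r^2 + \tfrac{rs}{\sqrt{3}}$. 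Imposing $\|z\|^2 = r^2 + t^2 + s^2 = 1$ together with $rs = 1/(2\sqrt{3})$ leads, after eliminating $s = \sqrt{3}/(6r)$, to the quadratic $24u^2 - 10u + 1 = 0$ in $u = r^2$, whose roots are $1/4$ and $1/6$. Taking $u = 1/4$ yields the explicit candidate
\[
z_1 = \tfrac{1}{2}, \qquad z_2 = i\sqrt{\tfrac{5}{12}}, \qquad z_3 = -\tfrac{1}{\sqrt{3}}.
\]
I would then simply verify $\|z\|^2 = \tfrac{1}{4} + \tfrac{5}{12} + \tfrac{1}{3} = 1$, $q(z) = \tfrac{1}{4} - \tfrac{5}{12} + \tfrac{1}{6} = 0$, and $\Re(\overline{z_1} z_3) = -1/(2\sqrt{3})$.

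\textbf{Expected obstacle.} There is essentially no obstacle beyond finding the witness: once the ansatz with a purely imaginary middle coordinate is made (motivated by the fact that allowing $z_2^2$ to be negative is exactly what complex freedom buys us), the remaining computation is a one-variable problem. The only subtlety is noticing that the usefulness of complexification is concentrated in the freedom to make $z_2^2$ negative, which is what breaks the nonnegativity argument used on the real side. This also indicates that no such witness exists on the real sphere, consistent with $p_{q,\min}=0$.
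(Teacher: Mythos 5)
Your proposal is correct and follows essentially the same strategy as the paper: the real side is handled by the identical observation that the constraint forces $xz=\sqrt{3}(x^2+y^2)\ge 0$ with equality at $(0,0,1)$, and the Hermitian side is handled by exhibiting an explicit complex unit vector on the variety and evaluating $P$. Your witness $z=(\tfrac12,\,i\sqrt{5/12},\,-\tfrac{1}{\sqrt3})$ is different from the paper's $v=(\tfrac{1}{\sqrt6},\,\tfrac{i}{\sqrt3},\,-\tfrac{1}{\sqrt2})$, but both work (they are the two roots $u=1/4$ and $u=1/6$ of the quadratic $24u^2-10u+1=0$ you derive), and all your arithmetic checks out: $\|z\|^2=\tfrac14+\tfrac{5}{12}+\tfrac13=1$, $q(z)=\tfrac14-\tfrac{5}{12}+\tfrac16=0$, and $P(z,z^\dagger)=\Re(\overline{z_1}z_3)=-\tfrac{1}{2\sqrt3}$. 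The extra derivation motivating the ansatz (purely imaginary second coordinate, reduction to a one-variable quadratic) is a nice addition that the paper omits, though not logically necessary since only an explicit verification is needed.
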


The example provided by this proposition is non-trivial in two ways:
\begin{enumerate}
\item Note that $p_{q,\min} \geq P_{r,\min}$ for any collection of complex forms $r_1,\dots, r_t$ for which $V_{\complex}(r_1,\dots, r_t) \cap \Vr = V_{\real}(q_1,\dots, q_{\ell})$. In the above example, $V_{\complex}(q)=\cl_{\complex}(V_{\real}(q))$, where $\cl_{\complex}(\cdot)$ denotes the complex Zariski closure~\cite{harris2013algebraic}, so there is no such collection of forms $r_1,\dots, r_t$ for which ($p_{q,\min}=0 \iff P_{r,\min}=0$) holds. Indeed, it is easily checked that $q$ is irreducible over $\complex$ and that $V_{\complex}(q)$ contains at least one real smooth point (the only singular point of $V_{\complex}(q)$ is $(0,0,0)$). Hence $\setft{Cl}_{\complex}(V_{\real}(q))=V_{\complex}(q)$ by~\cite[Theorem 2.2.9(2)]{mangolte2020real}.
\item It is also natural to ask if equality holds under a different choice of Hermitian Gram operator for $p$. This is not the case because when $p$ is quadratic, $P$ is the unique Hermitian Gram operator for $p$.
\end{enumerate}

\begin{proof}[Proof of Proposition~\ref{prop:constraints_counterexample}]
Let us first verify that $p_{q,\min}=0$. It is clear that $p_{q,\min}\geq0$ because any solution must satisfy $xz=\sqrt{3}(x^2+y^2) \geq 0$. For the equality to zero, note that $q(0,0,1)=p(0,0,1)=0$. For the last inequality, consider the unit vector $v=(\frac{1}{\sqrt{6}},\frac{i}{\sqrt{3}},-\frac{1}{\sqrt{2}}) \in \complex^3$, which is a zero of $q$. Note that
\ba
M(p)=
\begin{bmatrix}
0 & 0 & 1/2 \\
0 & 0 & 0\\
1/2 & 0 & 0
\end{bmatrix},
\ea
and $P(v,v^\dg)=-\frac{1}{2\sqrt{3}}$. This completes the proof.
\end{proof}

We conclude this section by noting that there are two other natural ways to adapt our hierarchy to handle constraints, neither of which produce hierarchies of eigencomputations: First, one can use the method described in~\cite{ahmadi2019construction} for adapting certificates of global positivity to perform constrained polynomial optimization, but each step of the resulting hierarchy would require the use of bisection. Second, one could simply replace the sum-of-squares polynomials that appear in standard positivstellensatze with polynomials that our hierarchy certifies to be globally non-negative at some level. For example, a special case of Putinar's positivestellensatz says that a polynomial $p(x)$ is strictly positive on $V_{\real}(q)$ if and only if there exists a sum-of-squares polynomial $r(x)$ and a polynomial $h(x)$ for which $p(x)=r(x)+h(x) q(x)$~\cite{putinar1993positive,lasserre2001global}. Instead of searching for a sum-of-squares polynomial $r(x)$, one could impose that some fixed level of our hierarchy certifies global non-negativity of $r(x)$. This would give a sequence of semidefinite programs, indexed by the degrees of $r$ and $h$, and by the level of our hierarchy that we consider, which successfully finds such a pair $(r, h)$ in some high enough degrees and level if and only if $p(x)$ is strictly positive on $V_{\real}(q)$.

\section{Acknowledgments}
We thank Aravindan Vijayaraghavan for valuable insights in the early stages of this work. We thank an anonymous referee for insightful feedback to improve the exposition of this work. We thank another anonymous referee for pointing out the simpler lower bound on $\delta$ appearing at the end of the proof of Theorem~\ref{thm:real_opt}. We also thank Alexander Taveira Blomenhofer, Omar Fawzi, Monique Laurent, Victor Magron, Tim Netzer, Bruce Reznick, Jurij Volcic and Timo de Wolff for enlightening discussions. N.J. was supported by NSERC Discovery Grant RGPIN2022-04098. B.L. acknowledges that this material is based
 upon work supported by the National Science Foundation under Award No. DMS-2202782.

\appendix

\section{$M(p)_k$ in coordinates}\label{ap:mk}

Let $\Vr=\real^n$. In this section, for a real form $p \in S^{2d}(\Vr^*)$, we write $M(p)$ and $M(p)_k$ in coordinates. For a multi index $\bfalpha \in \integer_{\geq 0}^n$, let $|\bfalpha|=\alpha_1+\dots + \alpha_n$, and let $\binom{|\bfalpha|}{\bfalpha}=\frac{|\bfalpha|!}{\alpha_1! \cdots \alpha_n!}$ be the multinomial coefficient. Fix an orthonormal basis $e_1,\dots, e_n$ of $\Vr$, and consider the basis $\{e^{\bfalpha} : \bfalpha \in \integer_{\geq 0}^n, |\bfalpha|=d\} \subseteq S^d(\Vr)$, where\footnote{We note that the monomial basis is orthogonal but not normalized (with respect to the induced inner product from $\Vr$). An orthonormal basis is given by $\{\binom{d}{\bfalpha}^{1/2} e^{\bfalpha}\}$.
}
\ba
e^{\bfalpha}:=e_1^{\alpha_1} \cdots e_n^{\alpha_n} = \Pi_d (e_1^{\otimes \alpha_1} \ootimes e_n^{\otimes \alpha_n}) \in S^d(\Vr),
\ea
and $\Pi_d$ is the projection onto $S^d(\Vr)$. We also let $x^{\bfalpha}=x_1^{\alpha_1} \cdots x_n^{\alpha_n} \in \real$ when $x= x_1 e_1+\dots + x_n e_n \in \Vr.$

%Let $f_i = e_i^*$ be the dual basis, and let $f^{\bfalpha}=(e^{\bfalpha})^*$.
\begin{prop}
Let $p \in S^{2d}(\Vr^*)$ be a form, let $P=M(p) \in \Sym(S^d(\Vr))$ be the maximally symmetric Gram operator of $p$, and for $k \geq d$ let $P_k = M(p)_k \in \Sym(S^k(\Vr))$ be defined by $P_k= \Pi_k (P \otimes \I_{\Vr}^{\otimes k-d})\Pi_k$. If
\ba
p(x)=\sum_{\substack{\bfgamma \in \integer_{\geq 0}^n\\ |\bfgamma|=2d}} C_{\bfgamma} \binom{2d}{\bfgamma} x^{\bfgamma},
\ea
then
\ba\label{eq:M}
P=\sum_{|\bfalpha|=|\bfbeta|=d} C_{\bfalpha+\bfbeta} \binom{d}{\bfalpha} \binom{d}{\bfbeta} e^{\bfalpha}   (e^{\t})^{\bfbeta},
\ea
and
\ba\label{eq:Mk}
P_k=\sum_{\substack{|\bfalpha|=|\bfbeta|=d \\ |\bfgamma|=k-d}} C_{\bfalpha+\bfbeta} \binom{d}{\bfalpha} \binom{d}{\bfbeta} \binom{k-d}{\bfgamma} e^{\bfalpha+ \bfgamma}  (e^{\t})^{\bfbeta+\bfgamma}.
\ea
\end{prop}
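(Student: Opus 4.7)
The plan is to trace $p$ through the chain of maps \eqref{eq:injects} defining $M(p)$, working entirely in coordinates in the monomial basis. The main combinatorial identity required for \eqref{eq:M} is
\[
\binom{2d}{\bfgamma}\,(e^\t)^{\bfgamma} \;=\; \sum_{\substack{\bfalpha+\bfbeta=\bfgamma\\|\bfalpha|=|\bfbeta|=d}}\binom{d}{\bfalpha}\binom{d}{\bfbeta}\,(e^\t)^{\bfalpha}\otimes(e^\t)^{\bfbeta}
\]
in $S^d(\Vr^*)\otimes S^d(\Vr^*)$, where the left hand side is viewed via the inclusion $S^{2d}(\Vr^*)\injects S^d(\Vr^*)\otimes S^d(\Vr^*)$. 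I would establish this by writing $(e^\t)^{\bfgamma}$ as the average of its $\binom{2d}{\bfgamma}$ distinct standard-basis orderings in $(\Vr^*)^{\otimes 2d}$ and partitioning these orderings according to the multiplicity split $(\bfalpha,\bfbeta)$ between the first $d$ and last $d$ positions; for each split there are exactly $\binom{d}{\bfalpha}\binom{d}{\bfbeta}$ contributing orderings, which combine to form $\binom{d}{\bfalpha}\binom{d}{\bfbeta}(e^\t)^{\bfalpha}\otimes(e^\t)^{\bfbeta}$, and the total count matches $\binom{2d}{\bfgamma}$ by the multi-index Vandermonde identity.

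With this identity in hand, formula \eqref{eq:M} follows by expanding $p = \sum_{\bfgamma} C_{\bfgamma}\binom{2d}{\bfgamma}(e^\t)^{\bfgamma}$, substituting term by term, and tracing through the canonical isomorphisms $S^d(\Vr^*)\otimes S^d(\Vr^*)\cong S^d(\Vr^*)\otimes S^d(\Vr)\cong \End(S^d(\Vr))$. Under the inner-product identification on the second factor, $(e^\t)^{\bfbeta}$ corresponds to $e^{\bfbeta}$, and the rank-one tensor $(e^\t)^{\bfalpha}\otimes e^{\bfbeta}$ corresponds to the operator $e^{\bfbeta}(e^\t)^{\bfalpha}$; the symmetry of the coefficient $C_{\bfalpha+\bfbeta}\binom{d}{\bfalpha}\binom{d}{\bfbeta}$ under $\bfalpha\leftrightarrow\bfbeta$ then converts this into the stated form.

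For formula \eqref{eq:Mk}, I would use the containment $S^k(\Vr)\subseteq S^d(\Vr)\otimes S^{k-d}(\Vr)$, which yields $(\Pi_d\otimes\Pi_{k-d})\Pi_k=\Pi_k$, together with the fact that $\I_{\Vr}^{\otimes k-d}$ acts as the identity on $S^{k-d}(\Vr)$, to rewrite
\[
P_k \;=\; \Pi_k(P\otimes\I_{\Vr}^{\otimes k-d})\Pi_k \;=\; \Pi_k(P\otimes\Pi_{k-d})\Pi_k.
\]
Using the resolution of the identity $\Pi_{k-d}=\sum_{|\bfgamma|=k-d}\binom{k-d}{\bfgamma}\,e^{\bfgamma}(e^\t)^{\bfgamma}$ (which follows from the fact that $\{\sqrt{\binom{k-d}{\bfgamma}}\,e^{\bfgamma}\}$ is an orthonormal basis of $S^{k-d}(\Vr)$), formula \eqref{eq:M} for $P$, and the multiplication rule $\Pi_k(e^{\bfalpha}\otimes e^{\bfgamma})=e^{\bfalpha+\bfgamma}$ (which is the symmetric-tensor incarnation of $x^{\bfalpha}\cdot x^{\bfgamma}=x^{\bfalpha+\bfgamma}$), the double sandwich collapses term by term to yield \eqref{eq:Mk}.

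The main obstacle I anticipate is bookkeeping of normalizations: the monomial basis $\{e^{\bfalpha}\}$ is orthogonal but not orthonormal, so multinomial coefficients appear both from norm rescalings and from the combinatorial counts above, and these weights must be reconciled consistently through the several identifications between $\Vr^*$ and $\Vr$ and between symmetric tensors and polynomials.
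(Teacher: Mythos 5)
Your proposal is correct but follows a genuinely different route from the paper. For the formula~\eqref{eq:M}, the paper does not trace $p$ through the inclusion~\eqref{eq:injects} at all: it takes the right-hand side of~\eqref{eq:M} as an ansatz $M$, verifies that $\ip{x^{\otimes d}}{M x^{\otimes d}} = p(x)$ (i.e.\ that $M$ is a Gram operator), and separately verifies $M^{\t_1} = M$ by an index-shift computation, appealing to Proposition~\ref{prop:sym} for uniqueness. You instead compute the image of $p$ under the defining chain of maps directly, with the key step being the split identity
\begin{align}
\binom{2d}{\bfgamma}\,(e^{\t})^{\bfgamma} = \sum_{\substack{\bfalpha+\bfbeta=\bfgamma\\ |\bfalpha|=|\bfbeta|=d}} \binom{d}{\bfalpha}\binom{d}{\bfbeta}\,(e^{\t})^{\bfalpha}\otimes (e^{\t})^{\bfbeta}
\end{align}
in $S^d(\Vr^*)\otimes S^d(\Vr^*)$, obtained by partitioning the $\binom{2d}{\bfgamma}$ distinct orderings by the multiplicity split between the first $d$ and last $d$ slots. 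This is more self-contained (it does not invoke Proposition~\ref{prop:sym}) at the cost of slightly more delicate bookkeeping; the paper's verification argument is shorter because symmetry under $\bfalpha\leftrightarrow\bfbeta$ and the Vandermonde identity take care of everything at once. For~\eqref{eq:Mk}, the paper passes to the Hermitian-form picture and checks $M_k(z,z^\dg) = P(z,z^\dg)\norm{z}^{2(k-d)}$ by expanding both sides as bihomogeneous polynomials in $z, z^\dg$; you instead stay entirely at the operator level, using $\Pi_k(\Pi_d\otimes\Pi_{k-d}) = \Pi_k$ to replace $\I_{\Vr}^{\otimes k-d}$ with $\Pi_{k-d}$, resolving $\Pi_{k-d} = \sum_{|\bfgamma|=k-d}\binom{k-d}{\bfgamma}e^{\bfgamma}(e^{\t})^{\bfgamma}$, and collapsing via $\Pi_k(e^{\bfalpha}\otimes e^{\bfgamma}) = e^{\bfalpha+\bfgamma}$. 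Both computations land in the same place; the operator-level version you give is perhaps cleaner for anyone uncomfortable with the polarization identification, while the paper's polynomial-evaluation argument avoids having to justify the resolution-of-identity normalization separately.
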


\begin{proof}
Let $M \in \End(S^d(\Vr))$ be given by the righthand side of~\eqref{eq:M}. We use Proposition~\ref{prop:sym} to verify that $M=P$. First note that
\ba
\ip{x^{\otimes d}}{M x^{\otimes d}}&=\sum_{|\bfalpha|=|\bfbeta|=d} C_{\bfalpha+\bfbeta} \binom{d}{\bfalpha} \binom{d}{\bfbeta} x_1^{\alpha_1+\beta_1} \cdots x_n^{\alpha_n+\beta_n}\\
&=\sum_{\substack{\bfgamma \in \integer_{\geq 0}^n\\ |\bfgamma|=2d}} \sum_{\substack{|\bfalpha|=|\bfbeta|=d\\ \bfalpha+\bfbeta=\bfgamma}} C_{\bfgamma} \binom{d}{\bfalpha} \binom{d}{\bfbeta}  x_1^{\gamma_1} \cdots x_n^{\gamma_n}\\
&=\sum_{\substack{\bfgamma \in \integer_{\geq 0}^n\\ |\bfgamma|=2d}} C_{\bfgamma} \binom{2d}{\bfgamma} x_1^{\gamma_1} \cdots x_n^{\gamma_n}\\
&=p(x),
\ea
where the first two equalities are obvious, and the third follows from the identity
\ba
\sum_{\substack{|\bfalpha|=|\bfbeta|=d\\ \bfalpha+\bfbeta=\bfgamma}} \binom{d}{\bfalpha} \binom{d}{\bfbeta}=\binom{2d}{\bfgamma}.
\ea
It remains to prove that $M^{\t_1}=M$. Let $\bff_i \in \integer_{\geq 0}^n$ be the vector with $1$ in the $i$-th position and zeroes elsewhere, and note that $e^{\bfalpha}=\frac{1}{d} \sum_{i : \alpha_i >0} \alpha_i e_i \otimes e^{\bfalpha-\bff_i}$ when $|\bfalpha|=d$. Thus,
\ba
M&=\sum_{|\bfalpha|=|\bfbeta|=d} C_{\bfalpha+\bfbeta} \binom{d}{\bfalpha} \binom{d}{\bfbeta} e^{\bfalpha}   (e^{\t})^{\bfbeta}\\
&=\frac{1}{d^2}  \sum_{|\bfalpha|=|\bfbeta|=d} C_{\bfalpha+\bfbeta} \binom{d}{\bfalpha} \binom{d}{\bfbeta} \sum_{\substack{i,j \in [n]\\ \alpha_i, \beta_j >0}}  \alpha_i \beta_j (e_i \otimes e^{\bfalpha - \bff_i})  (e_j \otimes e^{\bfbeta - \bff_j})^\t.
\ea
Hence,
\ba
M^{\t_1}&=\frac{1}{d^2} \sum_{|\bfalpha|=|\bfbeta|=d} C_{\bfalpha+\bfbeta} \binom{d}{\bfalpha} \binom{d}{\bfbeta} \sum_{\substack{i,j \in [n]\\ \alpha_i, \beta_j >0}} \alpha_i \beta_j (e_j \otimes e^{\bfalpha - \bff_i})  (e_i \otimes e^{\bfbeta - \bff_j})^\t\\
&=\frac{1}{d^2} \sum_{i,j=1}^n  \sum_{\substack{|\bfalpha|=|\bfbeta|=d\\ \alpha_i, \beta_j>0}}C_{\bfalpha+\bfbeta}\binom{d}{\bfalpha} \binom{d}{\bfbeta}\alpha_i \beta_j (e_j \otimes e^{\bfalpha - \bff_i})  (e_i \otimes x^{\bfbeta - \bff_j})^\t\\
%&=\frac{1}{d^2} \sum_{i,j=1}^n  \sum_{\substack{|\bfalpha|=|\bfbeta|=d\\ \alpha_i, \beta_j>0}}  C_{\bfalpha+\bfbeta} \binom{d}{\bfalpha-\bff_i+\bff_j} \binom{d}{\bfbeta-\bff_j+\bff_i} (\alpha_j- \delta_{i,j}+1) (\beta_i-\delta_{i,j}+1) (x_j \otimes x^{\bfalpha - \bff_i})  (x_i \otimes x^{\bfbeta - \bff_j})^*\\
&= \frac{1}{d^2} \sum_{i,j=1}^n  \sum_{\substack{|\bfalpha'|=|\bfbeta'|=d\\ \alpha'_j, \beta'_i >0}}  C_{\bfalpha'+\bfbeta'} \binom{d}{\bfalpha'} \binom{d}{\bfbeta'} \alpha'_j \beta'_i (e_j \otimes e^{\bfalpha'-\bff_j})  (e_i \otimes e^{\bfbeta'-\bff_i})^\t\\
&=M,
\ea
where the third line follows from setting $\bfalpha'= \bfalpha - \bff_i + \bff_j$ and $\bfbeta'=\bfbeta-\bff_j+\bff_i$, and the rest are straightforward. Hence $M=P$.

Let $M_k$ be the righthand side of~\eqref{eq:Mk}. Let $\Vc=\complex^n$ (the complexification of $\Vr$), and regard $P$ as an element of $\Herm(S^d(\Vc))$ and $M_k$ as an element of $\Herm(S^k(\Vc))$. Recall that, as a Hermitian form, $P_k$ is given by
\ba
P_k(z,z^\dg)=P(z,z^\dg) \cdot \norm{z}^{2(k-d)}.
\ea
To complete the proof, it suffices to verify that $M_k(z,z^\dg)=P_k(z,z^\dg)$. Let
\ba
z = z_1 e_1 +\dots + z_n e_n \in \Vc.
\ea
Then
\ba
P_k (z,z^\dg) &=\sum_{|\bfalpha|=|\bfbeta|=d} C_{\bfalpha+\bfbeta} \binom{d}{\bfalpha} \binom{d}{\bfbeta} z^{\bfalpha}  (z^{\dg})^{\bfbeta} (z_1 z_1^\dg + \dots + z_n z_n^\dg)^{k-d}\\
&= \sum_{|\bfalpha|=|\bfbeta|=d} C_{\bfalpha+\bfbeta} \binom{d}{\bfalpha} \binom{d}{\bfbeta} z^{\bfalpha}  (z^{\dg})^{\bfbeta} \sum_{|\bfgamma|=k-d} \binom{k-d}{\bfgamma} z^{\bfgamma} (z^{\dg})^{\bfgamma} \\
&=\sum_{\substack{|\bfalpha|=|\bfbeta|=d \\ |\bfgamma|=k-d}} C_{\bfalpha+\bfbeta} \binom{d}{\bfalpha} \binom{d}{\bfbeta} \binom{k-d}{\bfgamma} z^{\bfalpha+ \bfgamma}  (z^{\dg})^{\bfbeta+\bfgamma}\\
&=\ip{z^{\otimes d}}{M_k z^{\otimes d}}\\
&=M_k(z,z^\dg).
\ea
This completes the proof.
\end{proof}

\begin{remark}\label{rmk:trace}
If $p(x)=x_1^{2j} x_2^{2(d-j)}$, then
\ba
\tr(M(p))=\binom{2d}{2j}^{-1} \binom{d}{j}^2 \tr(e^{(j,d-j)} (e^{\t})^{(j,d-j)})=\binom{2d}{2j}^{-1}\binom{d}{j}.
\ea
We use this formula in the proof of Theorem~\ref{thm:real_opt}.
\end{remark}

\section{Proof of Theorem~\ref{thm:mhsos}}\label{app:mhsos_proof}

Here we prove Theorem~\ref{thm:mhsos}. We first restate the theorem for convenience.

\begin{theorem}[m-HSOS hierarchy]\label{thm:app_mhsos}
Let $H \in \Herm(\U)$, let
\ba
H_k(\bfz,\bfz^\dg)=H(\bfz,\bfz^\dg) \cdot (\norm{z_1}^{2(k-d_1)}\ootimes \norm{z_n}^{2(k-d_m)})\in \Herm(S^k(\V_1)\ootimes S^k(\V_m))
\ea
when $k \geq \max_i d_i$, and let $\mu_k=\lambda_{\min}(H_k)$. Then $\mu_d \leq \mu_{d+1} \leq \dots $ and $\lim_k \mu_k = H_{\min}$. Furthermore,
\ba
H_{\min} - \mu_k \leq \norm{H}_{\infty} \frac{4 |\bfd| (\max_j n_j-1)}{k+1} = O(1/k),
\ea
where $|\bfd|=d_1+\dots + d_m$.
\end{theorem}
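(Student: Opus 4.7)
The plan is to mimic the proof of Theorem~\ref{thm:hsos} by iterating the quantum de Finetti theorem (Theorem~\ref{thm:definetti}), once per tensor factor, treating the remaining factors as ancilla systems at each step.

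First, I would set up the variational bound. Let $\psi_k$ be a unit eigenvector of $H_k$ with eigenvalue $\mu_k$, and for each $j$ let $\tau_k^{(j)}$ denote the partial trace of $\psi_k \psi_k^\dg$ down to $d_j$ copies in the $j$-th factor and $k$ copies in every other factor. One readily verifies $\tr(H \tau_k^{(m)}) = \mu_k$, because $\psi_k \psi_k^\dg$ commutes with the relevant symmetrizing projections and $\tr_{[d_j+1\cdot\cdot k]}$ of $H_k$ on the $j$-th factor recovers the appropriate insertion of identities. The monotonicity $\mu_k \leq \mu_{k+1}$ then follows by the same partial-trace/convexity argument as in Theorem~\ref{thm:hsos}: if $\psi_{k+1}$ is a minimum eigenvector of $H_{k+1}$, tracing one copy of each factor yields a feasible state for $H_k$ with the same expectation. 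The inequality $\mu_k \leq H_{\min}$ is immediate because product states $(z_1 z_1^\dg)^{\otimes k} \otimes \cdots \otimes (z_m z_m^\dg)^{\otimes k}$ give upper-bounds equal to $H(\bfz, \bfz^\dg)$.

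For the quantitative bound, I would apply Theorem~\ref{thm:definetti} with an ancilla one factor at a time. Since $\psi_k$ is symmetric in the $k$ copies of $\V_1$, viewing the remaining factors $S^k(\V_2) \otimes \cdots \otimes S^k(\V_m)$ as an ancilla, the de Finetti theorem with ancilla (which follows from the proof of~\cite[Theorem 7.26]{Wat18} verbatim) produces an operator
\ba
\sigma^{(1)} \in \setft{conv}\{(u_1 u_1^\dg)^{\otimes d_1} \otimes \rho_{u_1} : u_1 \in \V_1,\ \norm{u_1}=1,\ \rho_{u_1} \in \Density(S^k(\V_2) \otimes \cdots \otimes S^k(\V_m))\}
\ea
with $\norm{\tau_k^{(1)} - \sigma^{(1)}}_1 \leq \frac{4 d_1 (n_1 - 1)}{k+1}$. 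Since each $\rho_{u_1}$ is supported on the symmetric subspace in the remaining factors (by the symmetry of $\psi_k \psi_k^\dg$), I can apply the theorem again to the second factor inside each $\rho_{u_1}$, and iterate. After $m$ steps I obtain
\ba
\tau_k \in \setft{conv}\{(u_1 u_1^\dg)^{\otimes d_1} \otimes \cdots \otimes (u_m u_m^\dg)^{\otimes d_m} : u_j \in \V_j,\ \norm{u_j}=1\}
\ea
with $\norm{\tau_k^{(m)} - \tau_k}_1 \leq \sum_{j=1}^m \frac{4 d_j (n_j - 1)}{k+1} \leq \frac{4 |\bfd| (\max_j n_j - 1)}{k+1}$ by the triangle inequality and the monotonicity of the trace norm under partial trace.

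Finally I close the argument exactly as in the proof of Theorem~\ref{thm:hsos}:
\ba
H_{\min} - \mu_k \leq \tr(H \tau_k) - \tr(H \tau_k^{(m)}) \leq \norm{H}_\infty \, \norm{\tau_k - \tau_k^{(m)}}_1 \leq \norm{H}_\infty \frac{4 |\bfd| (\max_j n_j - 1)}{k+1},
\ea
where the first inequality uses convexity together with the fact that $\tau_k$ is a convex combination of the product states over which $H_{\min}$ is defined, and the second uses $\tr(AB) \leq \norm{A}_\infty \norm{B}_1$. Since this upper bound tends to $0$, we conclude $\lim_k \mu_k = H_{\min}$.

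The main obstacle, and the only non-routine step, is justifying the iterated application of the de Finetti theorem with ancilla. This requires either quoting an ancilla-version of~\cite{christandl2007one} or checking that the proof of~\cite[Theorem 7.26]{Wat18} goes through unchanged when the symmetric state carries a passive ancillary system. Once that is in hand, the rest of the argument is a direct transcription of the single-factor proof.
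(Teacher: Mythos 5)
Your proof is essentially correct, but it takes a genuinely different route from the paper's. The paper proves a single multi-factor quantum de Finetti theorem directly (Theorem~\ref{thm:definettigen}, which you did not see): it views $S^k(\V_1)\otimes\cdots\otimes S^k(\V_m)$ as an irreducible subrepresentation of the product unitary group $\Unitary(\V_1)\times\cdots\times\Unitary(\V_m)$, invokes the representation-theoretic de Finetti result of K\"onig--Mitchison~\cite{koenig2009most}, computes the quantity $\delta(\X)=\dim(\B)/\dim(\C)$ as a product of symmetric-subspace dimension ratios, and reads off the bound. The proof of Theorem~\ref{thm:app_mhsos} then mirrors the single-factor proof of Theorem~\ref{thm:hsos} verbatim. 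Your approach instead iterates a single-factor de Finetti theorem with ancilla: measure the $\V_1$ factors, keep the remaining factors as a passive ancilla, and repeat on each resulting conditional state. This buys you a from-scratch argument that avoids appealing to the representation-theoretic machinery, and your intermediate inequality $\sum_j 4d_j(n_j-1)/(k+1)$ is in fact slightly sharper than the bound $4|\bfd|(\max_j n_j - 1)/(k+1)$ stated in the theorem. The cost is that you need the de Finetti theorem with ancilla for \emph{mixed} symmetric states: after the first step the conditional states $\rho_{u_1}$ are generally mixed (they are supported on $S^k(\V_2)\otimes\cdots\otimes S^k(\V_m)$ but are not pure), so applying the theorem to the second factor requires either the mixed-state ancilla version of~\cite[Theorem~II.2$'$]{christandl2007one} or a purification step (reabsorbing the purifying system into the ancilla). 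Your claim that this follows ``verbatim'' from~\cite[Theorem~7.26]{Wat18} is a mild overstatement, since that theorem is stated for pure states without a side register; the adaptation is standard but does require a short argument. You correctly flag this as the one non-routine step. The monotonicity, the identity $\tr(H\tau_k^{(m)}) = \mu_k$, the contractivity of the trace norm under partial trace, and the final chain of inequalities all match the paper's argument and are all correct.
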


We prove this using a slightly more general quantum de Finetti theorem.
\begin{theorem}[More general quantum de Finetti theorem]\label{thm:definettigen}
Let $k \geq \max_j d_j$ be an integer, and let ${\psi \in S^k(\V_1)\ootimes S^k(\V_m)}$ be a unit vector. Then there exists a operator
\ba\label{eq:mtau}
\tau \in \setft{conv}\{(z_1z_1^\dg)^{\otimes d_1}\otimes \dots \otimes (z_m z_m^\dg)^{\otimes d_m} : z_j \in \V_j, \norm{z_j}=1\}\subseteq \Herm(\U)
\ea
for which
\ba
\norm{\tr_{[d_1+1 \cdot \cdot k], \dots , [d_m+1 \cdot \cdot k]}(\psi \psi^\dg)-\tau}_1 \leq \frac{4 |\bfd| (\max_j n_j-1)}{k+1},
\ea
where $|\bfd|=d_1+\dots+d_m$, and $\tr_{[d_1+1 \cdot \cdot k], \dots , [d_m+1 \cdot \cdot k]}(\psi\psi^\dg)$ denotes the partial trace over $k-d_j$ copies of $\V_j$ for each $j \in [m]$.
\end{theorem}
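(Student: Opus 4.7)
The plan is to reduce the multipartite statement to an iterated single-party application of Theorem~\ref{thm:definetti}, peeling off one subsystem at a time. The key intermediate tool is an \emph{operator} version of Theorem~\ref{thm:definetti} that permits an ancilla system.

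\textbf{Key lemma.} For any finite-dimensional Hilbert space $\mathcal{H}$ and any density operator $\sigma$ on $S^k(\V) \otimes \mathcal{H}$, there exist a probability measure $p$ on the unit sphere of $\V$ and a measurable family $\{\eta(z)\}$ of density operators on $\mathcal{H}$ such that
\ba
\Bignorm{ \tr_{[d+1 \cdot \cdot k]}(\sigma) - \int_{\norm{z}=1} (zz^\dg)^{\otimes d} \otimes \eta(z)\, dp(z) }_1 \leq \frac{4 d(\dim \V - 1)}{k+1}.
\ea
I would establish this by inspecting the proof of Theorem~\ref{thm:definetti} given in~\cite[Theorem 7.26]{Wat18}: that proof realizes the approximating mixture as the outcome distribution of an informationally complete POVM applied only to the $(k-d)$ ``extra'' copies of $\V$, and this POVM is oblivious to any ancilla tensored on. The error estimate therefore transfers verbatim, with the outcome $z$ controlling both the product factor $(zz^\dg)^{\otimes d}$ on $\V^{\otimes d}$ and a conditional ancilla state $\eta(z)$ on $\mathcal{H}$.

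\textbf{Iteration and error bookkeeping.} I would then apply the key lemma once per party. Starting with $\sigma_0 := \psi\psi^\dg$, viewed as a state on $S^k(\V_1) \otimes \bigl(S^k(\V_2) \ootimes S^k(\V_m)\bigr)$, I apply the key lemma with $\V = \V_1$ and ancilla $\mathcal{H} := S^k(\V_2) \ootimes S^k(\V_m)$. This produces $p_1$ and $\eta_1(z_1)$, with each $\eta_1(z_1)$ automatically supported on $S^k(\V_2) \ootimes S^k(\V_m)$ because the POVM acts only on the $\V_1$ copies and hence commutes with the symmetrizers on the other parties. Iterating---apply the key lemma to each $\eta_1(z_1)$ with $\V = \V_2$ and ancilla $S^k(\V_3) \ootimes S^k(\V_m)$, producing $p_2(\cdot \mid z_1)$ and $\eta_2(z_1,z_2)$, and so on through party $m$---step $j$ adds at most $4 d_j (n_j-1)/(k+1)$ to the trace-norm error, by the triangle inequality and the fact that integration against a probability measure followed by a partial trace is contractive under $\norm{\cdot}_1$. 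Summing over $j$ gives
\ba
\sum_{j=1}^m \frac{4 d_j (n_j - 1)}{k+1} \;\leq\; \frac{4 |\bfd|(\max_j n_j - 1)}{k+1},
\ea
and the resulting operator has the form in~\eqref{eq:mtau} via the joint measure $dp(z_1,\dots,z_m) = dp_1(z_1)\, dp_2(z_2 \mid z_1) \cdots dp_m(z_m \mid z_1,\dots,z_{m-1})$.

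\textbf{Main obstacle.} The only substantive step is the key lemma. A naive purification reduction is not immediate: Theorem~\ref{thm:definetti} requires the vector to lie in $S^k(\V)$ with no extra tensor factor, whereas any purification of $\sigma$ necessarily carries an ancilla system. The cleanest route is therefore to open up the measurement-based proof of Theorem~\ref{thm:definetti} in~\cite{Wat18} and verify directly that tensoring with an ancilla leaves both the POVM construction and the $4d(\dim\V-1)/(k+1)$ error estimate unchanged. The remaining iteration and triangle-inequality bookkeeping are routine.
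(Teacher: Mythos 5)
Your route is genuinely different from the paper's. The paper proves Theorem~\ref{thm:definettigen} in one shot by invoking the general group-theoretic de Finetti theorem of K\"onig--Mitchison~\cite{koenig2009most}: it takes the product group $\Unitary(\V_1)\times\cdots\times\Unitary(\V_m)$, notes that $\C=S^k(\V_1)\otimes\cdots\otimes S^k(\V_m)$ sits inside $\A\otimes\B$ with $\A=S^{d_1}(\V_1)\otimes\cdots\otimes S^{d_m}(\V_m)$, $\B=S^{k-d_1}(\V_1)\otimes\cdots\otimes S^{k-d_m}(\V_m)$ as an irreducible subrepresentation of multiplicity one, and bounds the relevant parameter $\dim(\B)/\dim(\C)$ by a product of binomial ratios, which yields the stated error. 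You instead propose to peel off one party at a time using a single-party de Finetti theorem strengthened to allow an ancilla. The iteration itself is sound: the triangle inequality, contractivity of the partial trace and of averaging against a probability measure, and the fact that pinching by the rank-one projector $(z_jz_j^\dg)^{\otimes d_j}$ exactly factorizes the conditional operator, give an error of $\sum_j 4d_j(n_j-1)/(k+1)$, which is even slightly sharper than the stated bound, and the resulting barycenter lies in the (compact) convex hull in~\eqref{eq:mtau}. So if your key lemma holds with the stated constant, the theorem follows.

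The gap is precisely the key lemma, and your justification for it is too quick. The ancilla version is a \emph{strictly stronger} statement than Theorem~\ref{thm:definetti} (indeed, it is unavoidable for your iteration, since after the first step the residual systems are entangled with the conditional data), and "the POVM acts only on the $k-d$ extra copies, so the estimate transfers verbatim" does not by itself establish it. The danger is concrete: the published single-party argument controls how close the operator on the remaining $d$ copies is to the coherent mixture; if one only knows that, conditionally on the outcome $z$, the marginal on $\V^{\otimes d}$ is $\epsilon$-close to the pure state $(zz^\dg)^{\otimes d}$, then closeness of the \emph{joint} conditional state (with the ancilla) to a product state follows only with a square-root loss (via gentle measurement/fidelity), which would degrade the bound to $O(\sqrt{dn/k})$ and ruin the $O(1/k)$ rate. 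To make your lemma rigorous you must carry out the estimate at the level of the joint operator, i.e.\ compare $\tr_{[d+1\cdot\cdot k]}(\sigma)$ with $c_k\int \bigl((z z^\dg)^{\otimes d}\otimes\I_{\mathcal H}\bigr)$-pinched terms directly, checking that every step of the Christandl--K\"onig--Mitchison--Renner/Watrous argument survives with $\I_{\mathcal H}$ tensored on (the normalization identities $c_{k-d}\int\norm{V(z)}^2=c_k\int\norm{W(z)}^2=1$ do survive, but the trace-norm manipulations are exactly where the work lies), or else cite a reference that states the ancilla version explicitly (this is essentially the "de Finetti theorem with a reference system", in the spirit of~\cite{christandl2007one}). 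As written, the proposal asserts rather than proves the one step that carries all the content; with that lemma properly established or cited, the rest of your argument is correct.
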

This theorem can be seen as a special case of~\cite[Theorem III.3, Remark III.4]{koenig2009most}, or a slight generalization of~\cite[Theorem II.2']{christandl2007one} and~\cite[Theorem 7.26]{Wat18}. We will prove the theorem as a corollary to~\cite[Theorem III.3, Remark III.4]{koenig2009most}.\begin{proof}[Proof of Theorem~\ref{thm:definettigen}]
Let
\ba
\A&=S^{d_1}(\V_1) \otimes \dots \otimes S^{d_m}(\V_m)\\
\B&=S^{k-d_1}(\V_1) \otimes \dots \otimes S^{k-d_m}(\V_m)\\
\C&=S^{k}(\V_1) \otimes \dots \otimes S^{k}(\V_m)\\
\X&=z_1^{\otimes d_1} \otimes \dots \otimes z_m^{\otimes d_m},
\ea
where the $z_j \in \V_j$ are arbitrary but fixed unit vectors. Then $\C \subseteq \A \otimes \B$ is an irreducible subrepresentation (with multiplicity one) of the product unitary group $\Unitary(\V_1) \times \dots \times U(\V_m)$. It is also straightforward to check that the quantity $\delta(\X)$ defined in~\cite[Definition III.2]{koenig2009most} is equal to
\ba
\frac{\dim(\B)}{\dim(\C)}&=\prod_{j=1}^m\frac{\binom{n_j+k-d_j-1}{k-d_j}}{\binom{n_j+k-1}{k}}\\
&\geq \prod_{j=1}^m \left(1-\frac{d_j(n_j-1)}{k+1}\right)\\
%&\geq 1- \sum_{j=1}^m \frac{d_j(\max_jn_j-1)}{k+1}\\
&\geq 1-\frac{d(\max_jn_j-1)}{k+1},
\ea
where the second line is a standard inequality that can be found e.g. in~\cite[Eq. (7.196)]{Wat18}.
The desired bound then follows directly from the bound given in~\cite[Theorem III.3, Remark III.4]{koenig2009most} in terms of $\delta(\X)$.
\end{proof}

\begin{proof}[Proof of Theorem~\ref{thm:app_mhsos}]
Let $\psi \in S^k(\V_1)\ootimes S^k(\V_m)$ be a unit eigenvector for $H_k$ with minimum eigenvalue, and let $\tau \in \Herm(\U)$ be an operator of the form~\eqref{eq:mtau} for which
\ba
\norm{\tr_{[d_1+1 \cdot \cdot k], \dots , [d_m+1 \cdot \cdot k]}(\psi \psi^\dg)-\tau}_1 \leq \frac{4 |\bfd| (\max_j n_j-1)}{k+1},
\ea
Then
\ba
H_{\min}- \mu_k &\leq \tr(H \tau) - \mu_k\\
&= \tr(H(\tau - \tr_{[d_1+1 \cdot \cdot k], \dots , [d_m+1 \cdot \cdot k]}(\psi \psi^\dg)))\\
 &\leq \norm{H}_{\infty} \norm{\tr_{[d_1+1 \cdot \cdot k], \dots , [d_m+1 \cdot \cdot k]}(\psi \psi^\dg)-\tau}_1\\
&\leq \norm{H}_{\infty} \frac{4 |\bfd| (\max_j n_j-1)}{k+1},
\ea
where the first line follows from the form of $\tau$ and convexity, the second line is straightforward, the third line follows from the inequality $\tr(AB)\leq \norm{A}_{\infty} \norm{B}_1$, and the fourth line follows from the quantum de Finetti theorem. This completes the proof.
\end{proof}

\section{Proof of Lemma~\ref{lemma:technical}}\label{ap:lemma}

In this section we prove Lemma~\ref{lemma:technical}, which was a technical ingredient for Theorem~\ref{thm:real_opt}.

\begin{lemma}[Lemma~\ref{lemma:technical} restated]
Let $d$ be a positive integer, and let
\ba
\delta=\sum_{j=0}^d \binom{d}{j}^2 \binom{2d}{2j}^{-1}\;\; t^j (1-t)^{d-j}.
\ea
Then
\ba
\min_{t \in [0,1]} \delta= {2^d}{\binom{2d}{d}^{-1}}.
\ea
\end{lemma}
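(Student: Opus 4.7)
The plan is to compute $\binom{2d}{d}\,\delta$ explicitly as a polynomial in $(1-2t)^{2}$ via generating functions, after which the lemma becomes immediate.

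First I would apply the factorial identity
\[
\binom{d}{j}^{2}\binom{2d}{2j}^{-1} \;=\; \frac{\binom{2j}{j}\binom{2d-2j}{d-j}}{\binom{2d}{d}}
\]
(which follows by writing both sides in factorials and canceling) to rewrite
\[
\binom{2d}{d}\,\delta \;=\; \sum_{j=0}^{d} \binom{2j}{j}\binom{2d-2j}{d-j}\, t^{j}(1-t)^{d-j}.
\]
By the Cauchy product applied to the central-binomial generating function $\sum_{j\geq 0}\binom{2j}{j}x^{j}=(1-4x)^{-1/2}$, this is the coefficient of $x^{d}$ in
\[
\frac{1}{\sqrt{(1-4tx)(1-4(1-t)x)}} \;=\; \frac{1}{\sqrt{(1-2x)^{2} - 4\alpha\, x^{2}}},
\]
where $\alpha := (1-2t)^{2} = 1-4t(1-t) \in [0,1]$ and the rightmost equality uses $16t(1-t)=4-4\alpha$.

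Next I would factor out $(1-2x)^{-1}$ and expand via $(1-u)^{-1/2}=\sum_{k\geq 0}\binom{2k}{k}4^{-k}u^{k}$ to obtain
\[
\bigl((1-2x)^{2}-4\alpha x^{2}\bigr)^{-1/2} \;=\; \sum_{k\geq 0} \binom{2k}{k}\,\alpha^{k}\,\frac{x^{2k}}{(1-2x)^{2k+1}}.
\]
Then, using $(1-2x)^{-(2k+1)}=\sum_{m\geq 0}\binom{m+2k}{2k}\,2^{m}\,x^{m}$ and extracting the coefficient of $x^{d}$ (so $m=d-2k$), this yields the closed form
\[
\binom{2d}{d}\,\delta \;=\; \sum_{k=0}^{\lfloor d/2 \rfloor} \binom{2k}{k}\binom{d}{2k}\, 2^{d-2k}\, (1-2t)^{2k}.
\]

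From this identity the conclusion is immediate: every summand on the right hand side is non-negative for $t \in [0,1]$, the $k=0$ term equals $2^{d}$, and the remaining terms vanish precisely when $t=1/2$. Hence $\delta \geq 2^{d}/\binom{2d}{d}$ with equality attained at $t=1/2$, as claimed. The only real obstacle is executing the generating-function bookkeeping carefully; as a sanity check I would verify the closed form by hand for $d \in \{2,3,4\}$ before committing to the general derivation.
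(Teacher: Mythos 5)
Your proof is correct, and it takes a genuinely different (and arguably cleaner) route than the paper's. Both arguments reduce the problem to finding a closed form for $\binom{2d}{d}\,\delta$ as a manifestly nonnegative polynomial in $(t-\tfrac12)^2$, and both arrive at the same identity
\[
\binom{2d}{d}\,\delta \;=\; \sum_{k=0}^{\lfloor d/2\rfloor}\binom{2k}{k}\binom{d}{2k}\,2^{d-2k}\,(1-2t)^{2k},
\]
but the routes diverge. The paper substitutes $x = t - \tfrac12$, expands brute-force in powers of $x$, and then evaluates the resulting double-sum coefficients via a chain of generating-function claims \emph{plus} the external alternating-sum identity $\sum_{k=0}^{s}(-1)^k\binom{2k}{k}\binom{2s-2k}{s-k}=2^s\binom{s}{s/2}$ for $s$ even (and $0$ for $s$ odd). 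You instead recognize the rewritten sum $\sum_j\binom{2j}{j}\binom{2d-2j}{d-j}t^j(1-t)^{d-j}$ as the $x^d$-coefficient of the Cauchy product $(1-4tx)^{-1/2}(1-4(1-t)x)^{-1/2}$, use the algebraic simplification $(1-4tx)(1-4(1-t)x) = (1-2x)^2 - 4(1-2t)^2 x^2$, factor out $(1-2x)^{-1}$, and read off the coefficient directly. Your approach sidesteps the alternating-sum lemma entirely and replaces the paper's Claims 1--4 with a single generating-function factorization, which is a net simplification. The only thing to be careful about (and you do handle it correctly, implicitly) is that $\bigl((1-2x)^2\bigr)^{-1/2}$ is taken as the formal-power-series branch $(1-2x)^{-1}$ with constant term $1$.
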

\begin{proof}
If we make the change of variables $x = t-1/2$ then we see that
\ba
\delta & = \sum_{j=0}^d \frac{\binom{d}{j}^2}{\binom{2d}{2j}} \;\; \left(\frac{1}{2}+x\right)^j \left(\frac{1}{2}-x\right)^{d-j} \\
& = \sum_{j=0}^d \frac{\binom{d}{j}^2}{\binom{2d}{2j}} \;\; \left(\sum_{k=0}^j \binom{j}{k}\frac{x^k}{2^{j-k}}\right)\left(\sum_{\ell=0}^{d-j} \binom{d-j}{\ell}\frac{(-x)^\ell}{2^{d-j-\ell}}\right) \\
& = \sum_{j=0}^d \frac{\binom{d}{j}^2}{\binom{2d}{2j}} \;\; \sum_{k=0}^j \sum_{\ell=0}^{d-j} (-1)^\ell \binom{j}{k}\binom{d-j}{\ell}\frac{x^{k+\ell}}{2^{d-k-\ell}}.
\ea
If we define $s = k+\ell$ then the above expression can be rewritten as
\ba\label{eq:delta_formula}
\delta & = \frac{1}{2^d}\sum_{s=0}^d (-2)^{s} \sum_{k=0}^{s} (-1)^{k}\sum_{j=0}^d\frac{\binom{d}{j}^2\binom{j}{k}\binom{d-j}{s-k}}{\binom{2d}{2j}}x^s.
\ea
The coefficient of $x^s$ above equals
\ba\begin{split}\label{eq:xs_coeff_first}
& \hphantom{{} = {}} \frac{(-1)^s}{2^{d-s}} \sum_{k=0}^{s} (-1)^{k}\left(\sum_{j=0}^d\frac{\binom{d}{j}^2\binom{j}{k}\binom{d-j}{s-k}}{\binom{2d}{2j}}\right) \\
& = \frac{(-1)^s}{\binom{2d}{d}2^{d-s}} \sum_{k=0}^{s} (-1)^{k}\left(\sum_{j=0}^d \binom{2j}{j}\binom{j}{k}\binom{2d-2j}{d-j}\binom{d-j}{s-k}\right).
\end{split}\ea
To complete the proof, we first evaluate this coefficient explicitly. When $s=0$ then it is well-known that $\sum_{j=0}^d \binom{2j}{j} \binom{2d-2j}{d-j}=4^d$ (see e.g. \cite[Eq. (5.39)]{graham1994concrete}). We generalize this to arbitrary $s$ using generating functions, for which we prove a series of claims.

\textbf{Claim 1.} For each integer $k \geq 0$, the Taylor series (centered at $0$) of the function
\[
    f_k(y) = \frac{\binom{2k}{k}y^k}{(1-4y)^{k + 1/2}}
\]
is $f_k(y) = \sum_{d=0}^\infty \binom{2d}{d}\binom{d}{k} y^d$.

\emph{Proof of Claim~1.} When $k = 0$ it is straightforward to prove that
\[
    f_0^{(d)}(y) = \frac{(2d)!}{d!(1-4y)^{d+1/2}},
\]
so $f^{(d)}_0(0) = (2d)!/d! = \binom{2d}{d}\binom{d}{0}d!$. For $k > 0$ an explicit formula for $f_k^{(d)}(y)$ is much uglier, but it is still straightforward to prove (by induction on $d$, for example) that $f^{(d)}_k(0) = \binom{2d}{d}\binom{d}{k}d!$. Alternatively, this generating function is described in OEIS sequence A046521 \cite{oeisA046521}.

\textbf{Claim 2.} For each integer $s \geq 0$, the $s$-th derivative of the function $g(y) = 1/(1-4y)$ is
\[
    g^{(s)}(y) = \frac{s!4^s}{(1-4y)^{s+1}},
\]
which has Taylor series
\[
    g^{(s)}(y) = \sum_{d=s}^\infty 4^d s!\binom{d}{s} y^{d-s}.
\]

\emph{Proof of Claim~2.} The explicit formula for $g^{(s)}(y)$ is straightforward to prove by induction on $s$. The Taylor series can be obtained by repeatedly differentiating the geometric series $g(y) = \sum_{d=0}^\infty (4y)^d$.

\textbf{Claim 3.} If $s \geq k \geq 0$ and $f_k$ is as in Claim~1, the Taylor series (centered at $0$) of $f_k(y)f_{s-k}(y)$ is
\[
    f_k(y)f_{s-k}(y) = \binom{2k}{k}\binom{2s-2k}{s-k} \sum_{d=s}^\infty 4^{d-s} \binom{d}{s} y^d.
\]

\emph{Proof of Claim~3.} If $f_k$ is as in Claim~1 and $g$ is as in Claim~2 then direct computation shows that
\ba
    f_k(y)f_{s-k}(y) & = \left(\frac{\binom{2k}{k}y^k}{(1-4y)^{k + 1/2}}\right)\left(\frac{\binom{2s-2k}{s-k}y^{s-k}}{(1-4y)^{s-k + 1/2}}\right) \\
    & = \binom{2k}{k}\binom{2s-2k}{s-k}\frac{y^s}{(1-4y)^{s+1}} \\
    & = \binom{2k}{k}\binom{2s-2k}{s-k}\frac{y^s}{s! 4^s} g^{(s)}(y) \\
    & = \binom{2k}{k}\binom{2s-2k}{s-k}\frac{y^s}{s! 4^s} \left(\sum_{d=s}^\infty 4^d s!\binom{d}{s} y^{d-s}\right) \\
    & = \binom{2k}{k}\binom{2s-2k}{s-k} \sum_{d=s}^\infty 4^{d-s} \binom{d}{s} y^{d},
\ea
where the third and fourth equalities come from Claim~2.

\textbf{Claim 4.} If $s \geq k \geq 0$ and $d \geq 0$ is an integer then
\[
    \sum_{j=0}^d \binom{2j}{j}\binom{j}{k}\binom{2d-2j}{d-j}\binom{d-j}{s-k} = 4^{d-s} \binom{2k}{k}\binom{2s-2k}{s-k} \binom{d}{s}.
\]

\emph{Proof of Claim~4.} By Claim~3, the coefficient of $y^d$ in $f_k(y)f_{s-k}(y)$ is $4^{d-s} \binom{2k}{k}\binom{2s-2k}{s-k} \binom{d}{s}$. By Claim~1, the coefficient of $y^d$ in $f_k(y)f_{s-k}(y)$ is $\sum_{j=0}^d \binom{2j}{j}\binom{j}{k}\binom{2d-2j}{d-j}\binom{d-j}{s-k}$. These quantities must equal each other.

We are now done with the series of claims. Plugging the result of Claim~4 into Equation~\eqref{eq:xs_coeff_first} reveals that the coefficient of $x^s$ in Equation~\eqref{eq:delta_formula} is equal to
\ba\label{eq:xs_formula_coeff}
\frac{(-1)^s 2^{d-s} \binom{d}{s}}{\binom{2d}{d}} \sum_{k=0}^{s} (-1)^{k}\binom{2k}{k}\binom{2s-2k}{s-k}.
\ea
If $s$ is odd then this coefficient is equal to $0$, since for all $0 \leq k \leq s$ the $k$-th term in the sum~\eqref{eq:xs_formula_coeff} is equal to the negative of the $(s-k)$-th term. If $s$ is even then the alternating sum over $k$ in the sum~\eqref{eq:xs_formula_coeff} is equal to $2^s\binom{s}{s/2}$ (see \cite{centralksum}), which shows that the coefficient of $x^s$ in Equation~\eqref{eq:delta_formula} is equal to
\ba\label{eq:xs_formula_coeffb}
\frac{2^{d} \binom{d}{s}\binom{s}{s/2}}{\binom{2d}{d}}
\ea
in this case. In particular, this tells us that the coefficient of $x^s$ is non-negative when $s$ is even. It follows that $\delta$ is a sum of even powers of $x$, so it is minimized when $x = 0$ (i.e., when $t = 1/2$). In this case, $\delta={2^d}{\binom{2d}{d}^{-1}}$. This completes the proof.
\end{proof}

%\section*{Declarations}
%
%\subsection*{Funding and Conflicts of Interest}
%This work was supported by NSERC Discovery Grant RGPIN2022-04098 and the National Science Foundation under Award No. DMS-2202782. The authors have no relevant financial or non-financial interests to disclose.

%\input{TODO}

%-----------------------------------------------------------------------------%
\bibliographystyle{alpha}
\bibliography{references}
%-----------------------------------------------------------------------------%

\end{document}